\newcommand{\R}{\mathbb{R}}
\newcommand{\N}{\mathbb{N}}
\newcommand{\Fcal}{\mathcal{F}}
\newcommand{\Kop}{\mathcal{U}}
\newcommand{\SetState}{\mathcal{Y}}
\newcommand{\SetControl}{\mathfrak{U}}
\newcommand\xqed[1]{\leavevmode\unskip\penalty9999 \hbox{}\nobreak\hfill \quad\hbox{#1}}
\newcommand{\exampleSymbol}{\xqed{$\triangle$}}
\newcolumntype{Y}{>{\centering\arraybackslash}X}
\newtheorem{theorem}{Theorem}[section]
\newtheorem{corollary}[theorem]{Corollary}
\newtheorem{lemma}[theorem]{Lemma}
\newtheorem{remark}[theorem]{Remark}
\newtheorem{example}[theorem]{Example}
\newtheorem{assumption}[theorem]{Assumption}
\renewcommand*\env@matrix[1][*\c@MaxMatrixCols c]{%
  \hskip -\arraycolsep
  \let\@ifnextchar\new@ifnextchar
  \array{#1}}
\newcolumntype{Y}{>{\centering\arraybackslash}X}
\let\OLDthebibliography\thebibliography
\renewcommand\thebibliography[1]{
	\OLDthebibliography{#1}
	\setlength{\parskip}{0pt}
	\setlength{\itemsep}{0pt plus 0.3ex}
}
\renewcommand\vec{\mathbf}
\date{}
\begin{document}

\title{Feedback control of nonlinear PDEs using data-efficient reduced order models based on the Koopman operator}
\author[1]{Sebastian Peitz}
\author[2]{Stefan Klus}
\affil[1]{\normalsize Department of Mathematics, Paderborn University, Germany}
\affil[2]{\normalsize Department of Mathematics and Computer Science, Freie Universit{\"a}t Berlin, Germany}

\maketitle

\begin{abstract}
In the development of model predictive controllers for PDE-constrained problems, the use of reduced order models is essential to enable real-time applicability. Besides local linearization approaches, Proper Orthogonal Decomposition (POD) has been most widely used in the past in order to derive such models. Due to the huge advances concerning both theory as well as the numerical approximation, a very promising alternative based on the Koopman operator has recently emerged. In this chapter, we present two control strategies for model predictive control of nonlinear PDEs using data-efficient approximations of the Koopman operator. In the first one, the dynamic control system is replaced by a small number of autonomous systems with different yet constant inputs. The control problem is consequently transformed into a switching problem. In the second approach, a bilinear surrogate model, is obtained via linear interpolation between two of these autonomous systems. Using a recent convergence result for Extended Dynamic Mode Decomposition (EDMD), convergence to the true optimum can be proved. We study the properties of these two strategies with respect to solution quality, data requirements, and complexity of the resulting optimization problem using the 1D Burgers Equation and the 2D Navier--Stokes Equations as examples. Finally, an extension for online adaptivity is presented.
\end{abstract}

\section{Introduction}
\label{sec:Introduction}

The control of systems governed by nonlinear partial differential equations (PDEs) is very challenging. Since classical control strategies are difficult to develop in this context, advanced techniques such as \emph{Model Predictive Control (MPC)} \cite{GP17} are very popular. In MPC, an open-loop optimal control problem is repeatedly solved online over a finite-time horizon using a model of the system dynamics, which then results in a closed-loop controller. The PDE-constrained optimal control problems we are interested in are of the following form: 
\begin{equation}\label{eq:OCP} \tag{OCP}
	\begin{aligned}
		\min_{u \in \SetControl} J(\vec{y}) &= \min_{u \in \SetControl} \int_{t_0}^{t_e}L(\vec{y}(\vec{x},t)) \ dt \\
		\mbox{s.t.} \quad \dot{\vec{y}}(\vec{x},t) &= G(\vec{y}(\vec{x},t),u(t)), \\
		\vec{y}(0) &= \vec{y}^0.
	\end{aligned}
\end{equation}
where $\vec{y} \in \SetState$ is the system state (depending on the $d$-dimensional space coordinate $\vec{x} \in \Omega \subseteq \R^d$ and the time $t \in \R^{\geq 0}$), $u \in \SetControl$ is the control function, and the partial differential operator $G: \SetState \times \SetControl \rightarrow \SetState$ describes the system dynamics. For ease of notation, the term in the objective function $L: \SetState \rightarrow \R$ does not depend on $u$ explicitly.

The downside of MPC is that the open-loop control problem~\eqref{eq:OCP} has to be solved in a short amount of time, which is generally not possible for PDE-constrained problems when using a standard discretization approach such as finite elements or finite volumes. 
A remedy to this issue is \emph{reduced order modeling (ROM)}, where the high-fidelity model is replaced by a low-dimensional surrogate model, see \cite{LMQR14,BGW15} for overviews. In the nonlinear case, the method of \emph{Proper Orthogonal Decomposition (POD)} \cite{Sir87} has been successfully applied in a large variety of problems concerning both simulation and control. In the latter case, there exist different approaches to ensure convergence towards the true optimum. The two most popular are to derive error bounds based on the singular values associated with the POD modes \cite{KV99,Row05,HV05,TV09} and to adapt classical \emph{trust-region} approaches to surrogate modeling \cite{Fah00,BC08,QGVW16}.

A much more recent approach to develop a ROM is via the linear but infinite-dimensional \emph{Koopman operator} \cite{Koo31}, which describes the dynamics of observables. In the past decade, significant advances were obtained concerning theoretical aspects of the Koopman operator \cite{MB04,Mez05,BMM12,Mez13} as well as its numerical approximation via \emph{Dynamic Mode Decomposition (DMD)} \cite{Sch10,RMB+09,TRL+14} or \emph{Extended Dynamic Mode Decomposition (EDMD)} \cite{WKR15,KGPS18,KKS16}. An advantage over POD is that this approach can also be applied in situations where the underlying system dynamics is unknown. 

The above-mentioned advancements have led to several approaches for including the Koopman operator in control frameworks, see, e.g., \cite{PBK15,PBK18,BBPK16,KM16,KKB17,AKM18}. In many of these approaches, the Koopman operator is approximated for an augmented state (consisting of the actual state and the control) in order to deal with the non-autonomous control system. For this reason, a large amount of data is necessary to cover a sufficient range of the dynamics. Alternative approaches have recently been presented by the authors in \cite{PK17} and \cite{Pei18}. 
Since the Koopman operator is only applicable to autonomous systems in its original formulation, we take the following two steps:
\begin{enumerate}[i)]
	\item replace the control system $G$ by a finite number of autonomous systems $G_{u^j}$ with constant input $u^j$,
	\item construct reduced order models for low-dimensional observations (instead of the entire state) of $G_{u^j}$ using the corresponding Koopman operator $\Kop_{u^j}$.
\end{enumerate}
In a third step, the PDE constraint in Problem~\eqref{eq:OCP} is replaced by the reduced model. We perform this step in two different ways (cf.~\cite{PK17} and \cite{Pei18}, respectively, for details):
\begin{enumerate}[{iii-}a)]
	\item transform the optimization problem into a switching problem (which of the autonomous systems has to be applied in each time step?),
	\item construct a bilinear surrogate model via linear interpolation between two Koopman operators $\Kop_{u^0}$ and $\Kop_{u^1}$.
\end{enumerate}
In this way, convergence towards the true optimum can be shown by utilizing a recent convergence result for EDMD \cite{KM17}.

Since reduced order modeling approaches using the Koopman operator are relatively new, people have much less experience in this direction compared to more established methods such as POD. The purpose of this chapter is therefore to study the two approaches described above regarding the numerical performance. We address both the quality of the solution compared to the PDE-constrained problem as well as the influence of the training data. Furthermore, the effect of introducing the switching problem transformation is studied.

The remainder of this chapter is structured as follows. In Section~\ref{sec:Koopman}, the notation for the Koopman operator is introduced and the reduced order modeling approach for low-dimensional observations is presented. In Section~\ref{sec:STMPC}, we give a short introduction to model predictive control which we use to realize feedback behavior. We then introduce the two reduced order modeling strategies in Section~\ref{sec:KROM} before studying numerical properties and the control performance in Section~\ref{sec:InfluenceDataBasis}. Finally, we use the concept from \cite{HWR14} to obtain online updates for the reduced models in Section~\ref{sec:OnlineUpdates} before drawing a conclusion in Section~\ref{sec:Conclusion}.

\section{Reduced order modeling using the Koopman operator}
\label{sec:Koopman}
Let $ \Phi : \SetState \to \SetState $ be a discrete deterministic dynamical system defined on the state space $\SetState$ and let $ f \colon \SetState \rightarrow \R $ be a real-valued observable of the system.\footnote{The state space $\SetState$ can either be a finite-dimensional (i.e., $\SetState \subseteq \R^d$) or an infinite-dimensional space \cite{KM17}. An extension to vector-valued observables can be obtained in a straightforward manner \cite{BMM12}.} Then the Koopman operator $ \Kop \colon \Fcal \to \Fcal $ with $\Fcal = L^{\infty}(\SetState)$, see~\cite{LaMa94,BMM12,Mez13,WKR15}, which describes the evolution of the observable $ f $, is defined by
\begin{equation*}
	(\Kop f)(\vec{y}) = f(\Phi(\vec{y})).
\end{equation*}
The Koopman operator is linear but infinite-dimensional. Its adjoint, the Perron--Frobenius operator, describes the evolution of densities. The definition of the Koopman operator can be naturally extended to continuous-time dynamical systems as described in \cite{LaMa94,BMM12}. Given an autonomous system of the form 
\begin{equation*}
	\dot{\vec{y}}(t) = G(\vec{y}(\vec{x},t)),
\end{equation*}
the \emph{Koopman semigroup} of operators $ \{ \Kop^t \} $ is defined as
\begin{equation*}
	(\Kop^t f)(\vec{y}) = f(\Phi^t(\vec{y})),
\end{equation*}
where $ \Phi^t $ is the flow map associated with $ G $. In what follows, we will mainly consider discrete dynamical systems, given by the discretization of ODEs or PDEs. That is, $ \Phi = \Phi^h $ for a fixed time step $ h $.

One method to compute a numerical approximation of the Koopman operator from data is EDMD \cite{WKR15,KKS16}. The following brief description is based on the review paper \cite{KNKWKSN18}. EDMD is a generalization of DMD \cite{Sch10,TRL+14} and can be used to compute a finite-dimensional approximation of the Koopman operator, its eigenvalues, eigenfunctions, and modes. In contrast to DMD, EDMD allows arbitrary basis functions -- which could be, for instance, monomials, Hermite polynomials, or trigonometric functions -- for the approximation of the dynamics. We do not observe the full (potentially infinite-dimensional) state of the system, but consider only a finite number of measurements, given by $ \vec{z} = f(\vec{y}) \in \R^q $. The special case $f = \mathsf{Id}$ is known as the \emph{full state observable}. For a given set of basis functions $ \{ \psi_{1},\,\psi_{2},\,\dots,\,\psi_{k} \} $, we then define a vector-valued function $ \psi \colon \R^q \to \R^k $ by
\begin{equation*}
	\psi(\vec{z}) =
	\begin{bmatrix}
		\psi_{1}(\vec{z}) & \psi_{2}(\vec{z}) & \dots & \psi_{k}(\vec{z})
	\end{bmatrix}^{\top}.
\end{equation*}
If $ \psi(\vec{z}) = \vec{z} $, we obtain DMD as a special case of EDMD. We assume that we have either measurement or simulation data, written in matrix form as
\begin{equation*}
	\vec{Z} =
	\begin{bmatrix}
		\vec{z}_1 & \vec{z}_2 & \cdots & \vec{z}_m
	\end{bmatrix}
	\quad\text{and}\quad
	\widetilde{\vec{Z}} =
	\begin{bmatrix}
		\widetilde{\vec{z}}_1 & \widetilde{\vec{z}}_2 & \cdots & \widetilde{\vec{z}}_m
	\end{bmatrix},
\end{equation*}
where $ \widetilde{\vec{z}}_i = f(\Phi(\vec{y}_i)) $. The data could either be obtained via many short simulations or experiments with different initial conditions or one long-term trajectory or measurement. If the data is extracted from one long trajectory, then $ \widetilde{\vec{z}}_i = \vec{z}_{i+1} $. The data matrices are embedded into the typically higher-dimensional feature space by
\begin{align*}
	\Psi_{\vec{Z}} =
	\begin{bmatrix} \psi(\vec{z}_{1}) & \psi(\vec{z}_{2}) & \dots & \psi(\vec{z}_{m}) \end{bmatrix}
	\quad \text{and} \quad
	\Psi_{\widetilde{\vec{Z}}} =
	\begin{bmatrix} \psi(\widetilde{\vec{z}}_{1}) & \psi(\widetilde{\vec{z}}_{2}) & \dots & \psi(\widetilde{\vec{z}}_{m}) \end{bmatrix}.
\end{align*}
With these data matrices, we then compute the matrix $ \vec{U} \in \R^{k \times k} $ defined by
\begin{equation*}
	\vec{U}^{\top} = \Psi_{\widetilde{\vec{Z}}} \Psi_{\vec{Z}}^+ = \big( \Psi_{\widetilde{\vec{Z}}} \Psi_{\vec{Z}}^{\top} \big) \big(\Psi_{\vec{Z}} \Psi_{\vec{Z}}^{\top}\big)^+,
\end{equation*}
where $^+$ denotes the pseudoinverse. 
The matrix $ \vec{U} $ can be viewed as a finite-dimensional approximation of the Koopman operator. 

\begin{figure}[b]
	\centering
	\includegraphics[width=0.55\textwidth]{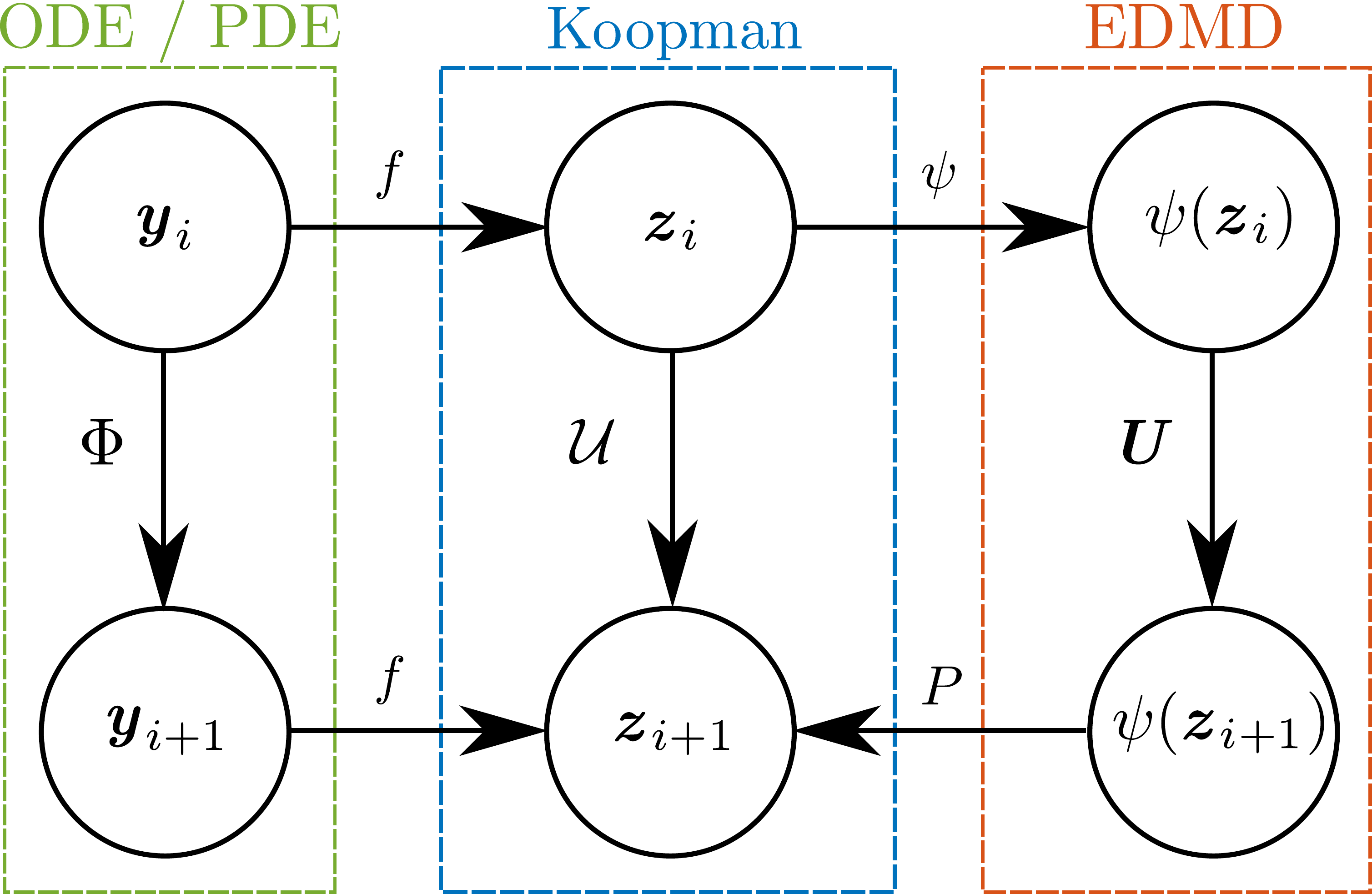}
	\caption{Relation between the system dynamics $\Phi$, the corresponding Koopman operator $\Kop$ and its finite-dimensional representation $\vec{U}$ computed via EDMD.}
	\label{fig:Koopman_EDMD}
\end{figure}

Convergence of EDMD in the infinite-data limit for a fixed set of basis functions was first analyzed in \cite{WKR15, KKS16}. Convergence towards the Koopman operator for the case that also the number of basis functions goes to infinity has recently been proven in~\cite{KM17}. Under some assumptions such as 
independent drawing of data points with respect to some given probability measure $ \mu $
and boundedness of the Koopman operator, the EDMD approximation $\vec{U}$ converges to the Koopman operator for $k \to \infty$ and $m\to\infty$ provided that $(\psi_i)_{i=1}^{\infty}$ is an orthonormal basis of $\Fcal$. For the convergence results below, we assume that these conditions are satisfied.

The decomposition of the Koopman operator into modes, eigenvalues and eigenfunctions is commonly used to analyze the system dynamics as well as predict the future state. In the situation we are presenting here, we can pursue an even simpler approach and obtain the update for the observable $\vec{z}$ directly using $\vec{U}$ which yields the Koopman operator based reduced order model:
\begin{equation}\label{eq:KROM} \tag{K-ROM}
	\psi(\vec{z}_{i+1}) = \vec{U}^{\top} \psi(\vec{z}_{i}), \quad i = 0,1,\ldots
\end{equation}
This approach is visualized in Figure~\ref{fig:Koopman_EDMD}, where we see that
\begin{align*}
	f \circ \Phi = \Kop f \approx P \circ \vec{U}^\top \circ \psi \circ f.
\end{align*}
If we let the number of data points as well as the number of basis functions go to infinity, then the EDMD approximation converges to the Koopman operator as discussed above.

Table~\ref{tab:Sampling} shows the efficiency of the K-ROM for the two example which we will consider throughout this chapter, the 1D Burgers equation and the 2D Navier--Stokes equations. Here, we have chosen a monomial basis for the dictionary $\Psi$. The maximum order of the monomials and the number of observations yield the K-ROM dimension. We see that the dimension of the PDE-constrained problem (obtained by a finite-difference (FD) approximation) is reduced by a factor of $1.4$ in the Burgers example. Although this does not seem like a large reduction, we obtain a speed-up of approximately $100$ since \eqref{eq:KROM} is linear on the one hand and we can choose larger step sizes in the reduced model on the other hand. For the Navier--Stokes example, we additionally have a major reduction of the dimension of the state compared to the finite volume (FV) discretization by a factor of almost $500$. This way, a speed-up of $75,000$ is achieved.

\begin{table}[h!]
	\centering
	\caption{Numerical setup and efficiency analysis for two example problems.}
	\begin{tabular}{p{6cm}p{2.5cm}p{2.5cm}}
		\hline\noalign{\smallskip}
		Problem & 1D Burgers & 2D NSE \\ 
		\noalign{\smallskip}\hline\noalign{\smallskip}
		Order of monomials & 3 & 2 \\ 
		dim($\vec{z}$) & 4 & 8 \\ 
		dim($\psi(\vec{z})$) = dim($\vec{U}_{u^j}$) & 35 & 45 \\ 
		dim($\vec{y}$) & 49 (FD) & 22,000 (FV) \\ 
		dim($\vec{y}$) / dim($\psi(\vec{z})$) & 1.4 & 488.9 \\ 
		Speed-up \eqref{eq:KROM} vs.~PDE & $\approx 100$ & $\approx 7.5 \cdot 10^4$ \\ 
		\noalign{\smallskip}\hline\noalign{\smallskip}
	\end{tabular}
	\label{tab:Sampling}
\end{table}

\section{Model Predictive Control}
\label{sec:STMPC}
Due to uncertainties and noise, open-loop control strategies are insufficient for applications. To overcome this issue, in MPC \cite{GP17}, 
the infinite-horizon open-loop problem is split into finite-horizon problems with a \emph{prediction horizon} of length $p$:
\begin{equation}\label{eq:MPC} \tag{MPC}
	\begin{aligned}
		&\min_{\vec{u} \in \R^p} \sum_{i=s}^{s+p-1} L(\vec{y}_i) \\
		\text{s.t.}\quad \vec{y}_{i+1} &= \Phi(\vec{y}_i, \vec{u}_{i-s+1}), \quad i = s,\ldots,s + p - 1, \\
		\vec{y}_s &= \vec{y}^s,
	\end{aligned}
\end{equation}
where $\vec{y}^s$ is the initial condition obtained (or approximated) from sensor data. The first part of this solution is then applied to the real system while the optimization is repeated with the prediction horizon moving forward by one sample time. In Problem~\eqref{eq:MPC}, the system dynamics are of discrete form since the control input is constant over each sample time interval. When dealing with continuous-time systems such as Problem~\eqref{eq:OCP}, this formulation can be regarded as the flow map $\Phi^h$ (cf.~Section~\ref{sec:Koopman}) of the continuous dynamics with time step $h$.

A consequence of the MPC method is that Problem~\eqref{eq:MPC} has to be solved online, i.e., within the time step $h$. Since this is in general impossible for PDE-constrained problems, we will present two approaches to replace the PDE constraint in \eqref{eq:MPC} by a Koopman operator based reduced order model in the next section.

\section{A data-efficient method to construct Koopman operator based surrogate models}
\label{sec:KROM}
As already outlined in the introduction, we will construct K-ROMs with significant speed-up factors. In order to do this in a data-efficient way, we perform steps i) and ii) mentioned there. 
The first step ensures that our data requirements are moderate since we only have to collect data for a finite set of inputs. The second steps allows us to derive linear models with a low dimension. Since the approach is entirely data-based and hence equation-free, we can use any observation and in particular those which are relevant for the control task at hand.

In a third step, we have to transform the optimal control problem accordingly which we will do in two different ways. In Section~\ref{subsec:KROM_Switched}, the optimization problem is transformed into a switching problem and in Section~\ref{subsec:KROM_Bilinear}, a bilinear model is constructed via linear interpolation.

\subsection{Transformation to switched systems}
\label{subsec:KROM_Switched}
A large variety of technical systems is controlled via switching between different inputs. Examples are valves in chemical reactors which are either open or closed or the switching of gears in electrical drives. These so-called \emph{switched systems} can be regarded as a special case of hybrid systems which possess both continuous and discrete-time control inputs (cf.~\cite{ZA15} for a survey). 
We here make use of the concept of switched systems in order to reduce the data that is required for the training process of the K-ROM. To this end, we replace the right-hand side of the dynamical control system by a finite set of autonomous systems which is achieved by fixing the input to $n_c$ different constant values $\{u^0,\ldots,u^{n_c-1}\}$ in \eqref{eq:OCP}. This yields $n_c$ different differential operators, $G_{u^0}, \ldots, G_{u^{n_c-1}}$, and the respective flow maps $\Phi_{u^0}, \ldots, \Phi_{u^{n_c-1}}$.

A consequence of this approach is that the optimization problem is transformed into a combinatorial problem where we have to select the optimal right-hand side in each time step. Since combinatorial problems are often more challenging to solve, we can fix the sequence in which the different autonomous systems are used, i.e., we switch from $G_{u^0}$ to $G_{u^1}$, from $G_{u^1}$ to $G_{u^2}$ and so on. Having reached the final system, we go back from $G_{u^{n_c-1}}$ to $G_{u^0}$. This way, the optimization variable again becomes real-valued, as we now have to compute the time instants for the switches $\vec{\tau} \in \R^{p+2}$ (with $\vec{\tau}_0 = t_0$ and $\vec{\tau}_{p+1} = t_e$):
\begin{equation}\label{eq:switched_dynamics}
	\begin{aligned}
		\dot{\vec{y}}(t) &= G_{u^j}(\vec{y}(\vec{x},t)) \quad \text{for}~t\in [\vec{\tau}_{l-1}, \vec{\tau}_l), \\
		\vec{y}(0) &= \vec{y}^0, \\
		j &= l~\mbox{mod}~n_c.
	\end{aligned}
\end{equation}
Using \eqref{eq:switched_dynamics}, we can reformulate the open-loop problem \eqref{eq:OCP} in terms of the switching instants
\begin{equation}\label{eq:STO} \tag{$\mbox{OCP}^{s}$} 
	\begin{aligned}
		\min_{\vec{\tau} \in \R^{p+2}} J(\vec{y}) &= \min_{\vec{\tau} \in \R^{p+2}} \int_{t_0}^{t_e}L(\vec{y}(\vec{x},t)) \ dt\\
		\mbox{s.t.} \qquad \qquad &\eqref{eq:switched_dynamics},\\
		\vec{y}(0) &= \vec{y}^0.
	\end{aligned}
\end{equation}
Different methods exist for efficiently computing solutions to \eqref{eq:STO}, see, e.g., \cite{EWD03,EWA06,SOBG16} for continuous-time or \cite{FMO13} for discrete-time problems. A second-order method has recently been proposed in \cite{SOBG17}. The following example illustrates the consequences of introducing such a switching control.

\begin{example}\label{ex:VdP}
Assume we want to control the behavior of the Van der Pol oscillator given by
\begin{equation*}
    \begin{aligned}
        \dot{\vec{y}}(t)
            &= G(\vec{y}(t),\,u(t))
            = \begin{pmatrix}
                \vec{y}_2(t) \\
                \big(1 - \vec{y}_1(t)^2\big) \, \vec{y}_2(t) - \vec{y}_1(t) + u(t)
              \end{pmatrix},\\
        \vec{y}(0) &= \vec{y}^0.
    \end{aligned}
\end{equation*}
By restricting the input $ u $ to $n_c$ values, we can transform the control system into $n_c$ autonomous systems of the form
\begin{equation*}
    \begin{aligned}
        \dot{\vec{y}}(t)
            &= G_{u^j}(\vec{y}(t))
            = \begin{pmatrix}
                \vec{y}_2(t) \\
                \big(1 - \vec{y}_1(t)^2\big) \, \vec{y}_2(t) - \vec{y}_1(t)
              \end{pmatrix}
              +
              \begin{pmatrix}
                0 \\
                u^j
              \end{pmatrix},\\
           \vec{y}(0) &= \vec{y}^0.
    \end{aligned}
\end{equation*}
The system dynamics for different input functions $ u $ are shown in Figure~\ref{fig:VdP}.

\begin{figure}[tbh]
    \centering
    \begin{minipage}{0.45\textwidth}
        \centering
        (a) \\ \includegraphics[width=\textwidth]{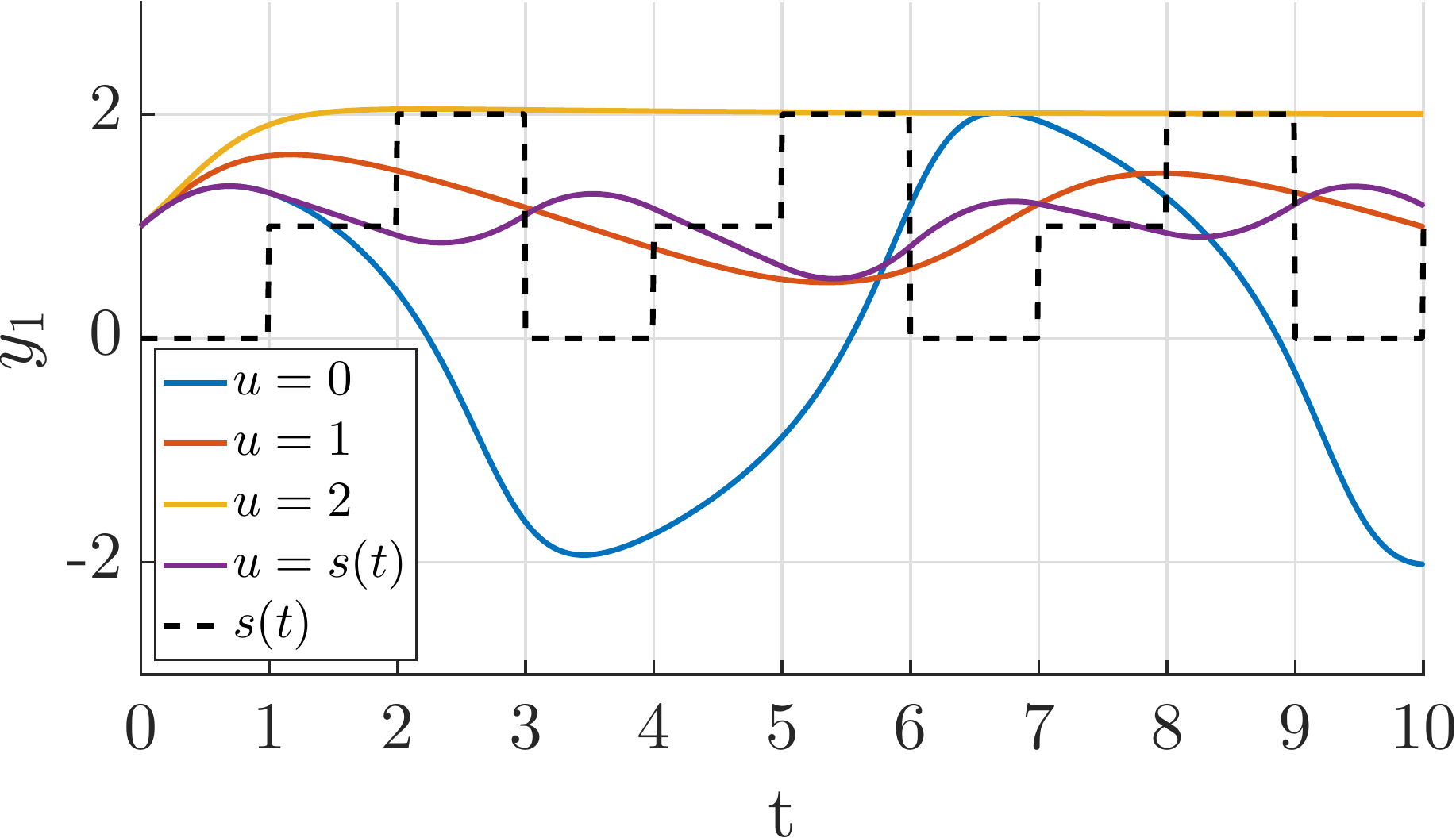}
    \end{minipage}\hfil
    \begin{minipage}{0.45\textwidth}
        \centering
        (b) \\ \includegraphics[width=\textwidth]{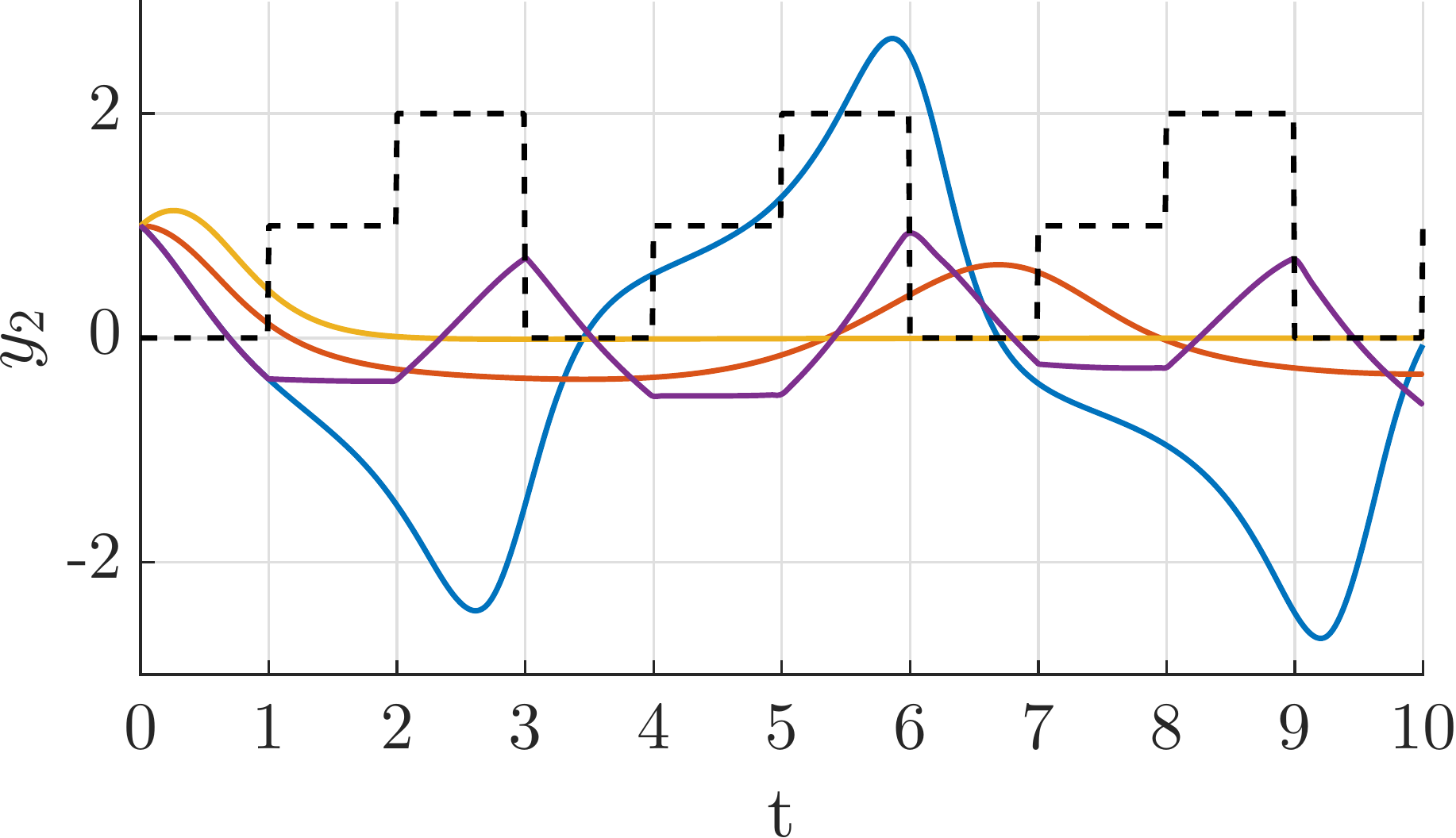}
    \end{minipage}
    \caption{The trajectories of three different autonomous systems with starting point $ \vec{y}^0 = (1, 1)^\top $ and the trajectory of the switched system with switching according to the dashed line.}
    \label{fig:VdP}
\end{figure}
\exampleSymbol
\end{example}

If we consider the discrete-time formulation \eqref{eq:MPC} for the closed-loop controller, the transformation is achieved in a very similar fashion
\begin{equation}\label{eq:MPC_STO} \tag{$\mbox{MPC}^{s}$}
	\begin{aligned} 
		&\min_{\vec{\tau} \in \{u^0,\ldots,u^{n_c-1}\}^p} \sum_{i=s}^{s+p-1} L(\vec{y}_i) \\
		\mbox{s.t.}\quad \vec{y}_{i+1} &= \Phi_{\vec{\tau}_{i-s+1}}(\vec{y}_i), \quad i = s,\ldots,s + p - 1, \\
		\vec{y}_{s} &= \vec{y}^s.
	\end{aligned}
\end{equation}
Since the input is constant over the sample time interval, we here obtain a combinatorial problem without a workaround as in the continuous-time case.
Each entry of $\vec{\tau}$ now describes which flow map $\Phi_{\vec{\tau}_i}$ to apply in the $i^{\mathsf{th}}$ step.

A popular approach to solve discrete-time optimal control problems of such form is via dynamic programming \cite{BD15}. However, this is only advisable if we are interested in larger prediction horizons $p$. For low values of $p$ (say, $3$), the most efficient way is indeed to evaluate all ${n_c}^p$ values of $\vec{\tau}$.

\subsubsection{Example: The 1D Burgers equation}
\label{subsubsec:ExampleBurgers}

We now study the difference between a continuous control input and a switched system approximation using the 1D Burgers Equation with periodic boundary conditions and a distributed control:
\begin{equation} \label{eq:BurgersContinuous}
	\begin{aligned}
		\dot{y}(x,t) - \nu \Delta y(x,t) + y(x,t) \nabla y(x,t) &= u(t) \chi(x),\\
		y(x,0) &= y^0(x).
	\end{aligned}
\end{equation}
The viscosity is set to $\nu = 0.01$ and the distributed control is realized by a time dependent scalar input $u$ and a shape function $\chi$ which is shown in Figure~\ref{fig:Burgers}~(a).
\begin{figure}[ht!]
	\centering
	\parbox[b]{0.45\textwidth}{\centering (a) \\ \includegraphics[width=.38\textwidth]{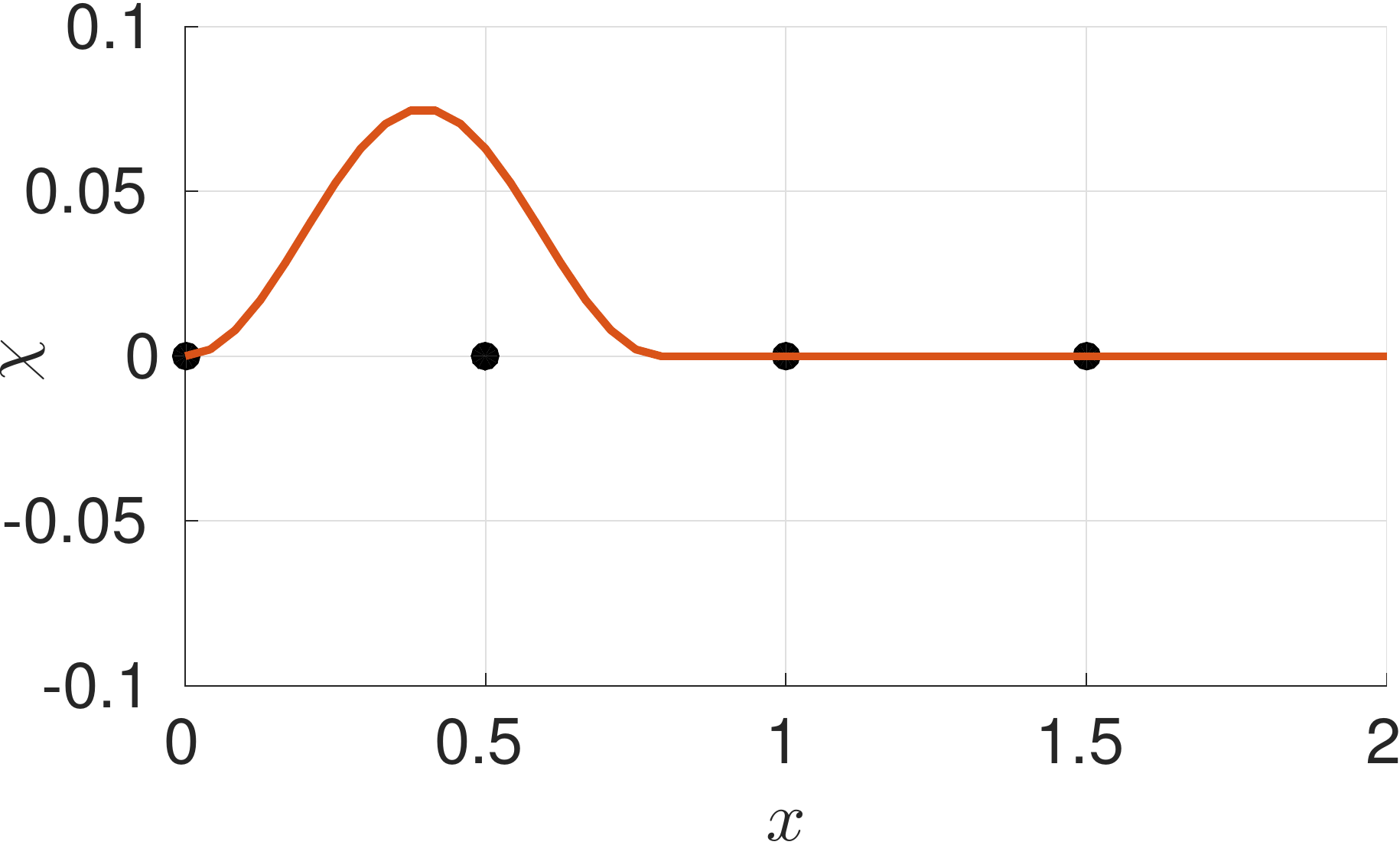}} \hfil
	\parbox[b]{0.45\textwidth}{\centering (b) \\ \includegraphics[width=.4\textwidth]{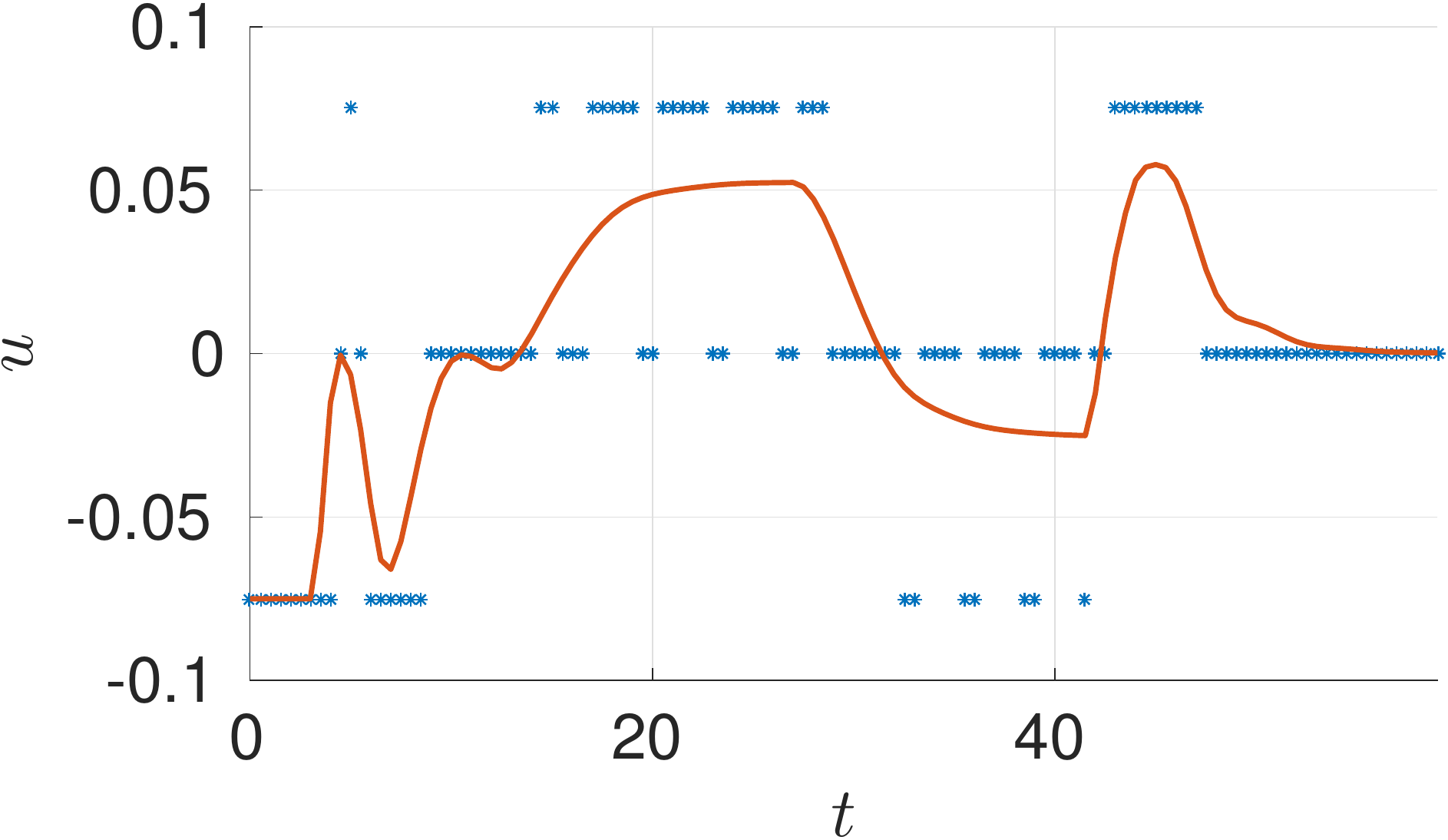}} \\[1ex]
	\parbox[b]{0.45\textwidth}{\centering (c) \\ \includegraphics[width=.4\textwidth]{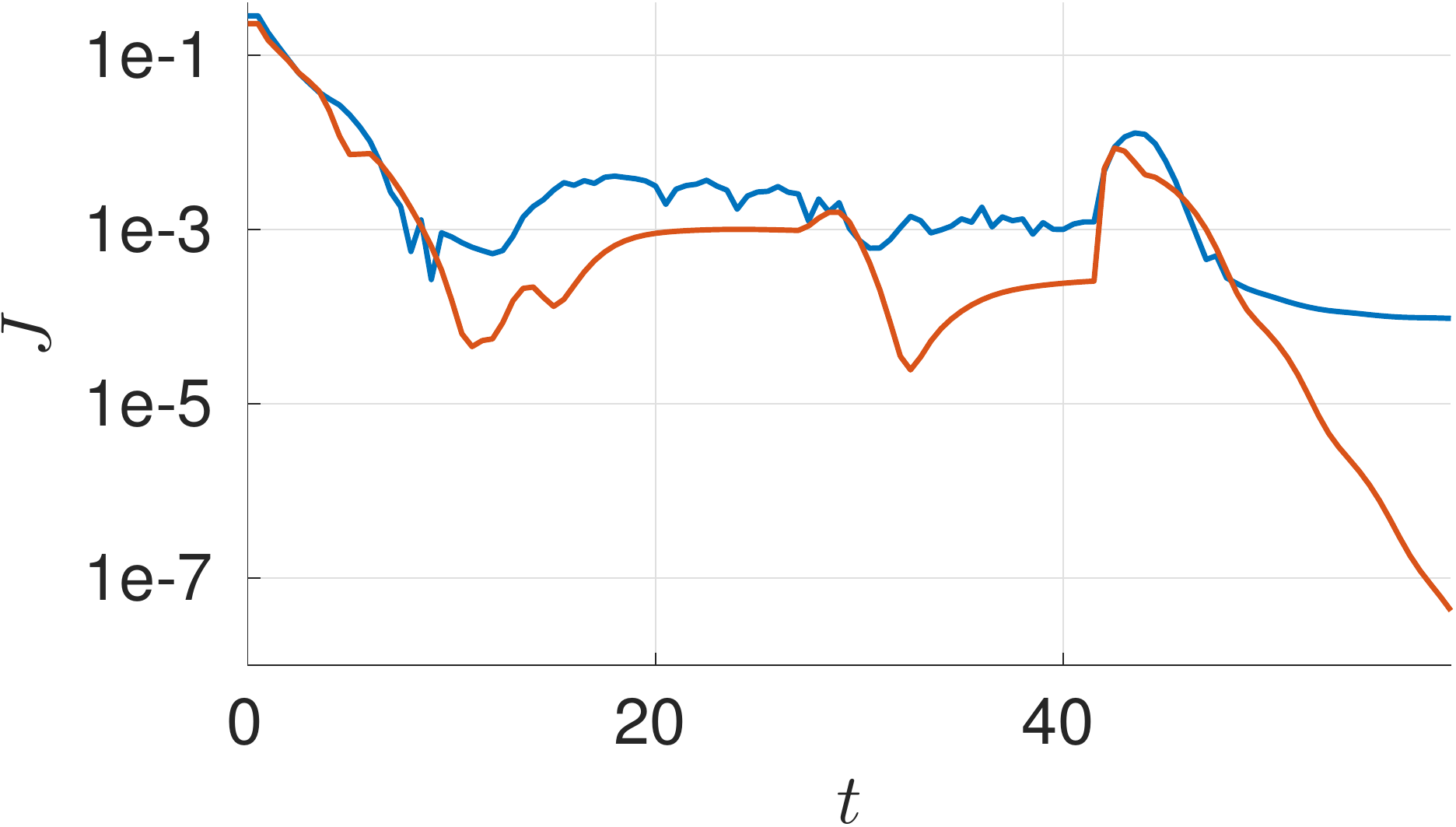}}\hfil
	\parbox[b]{0.45\textwidth}{\centering (d) \\ \includegraphics[width=.4\textwidth]{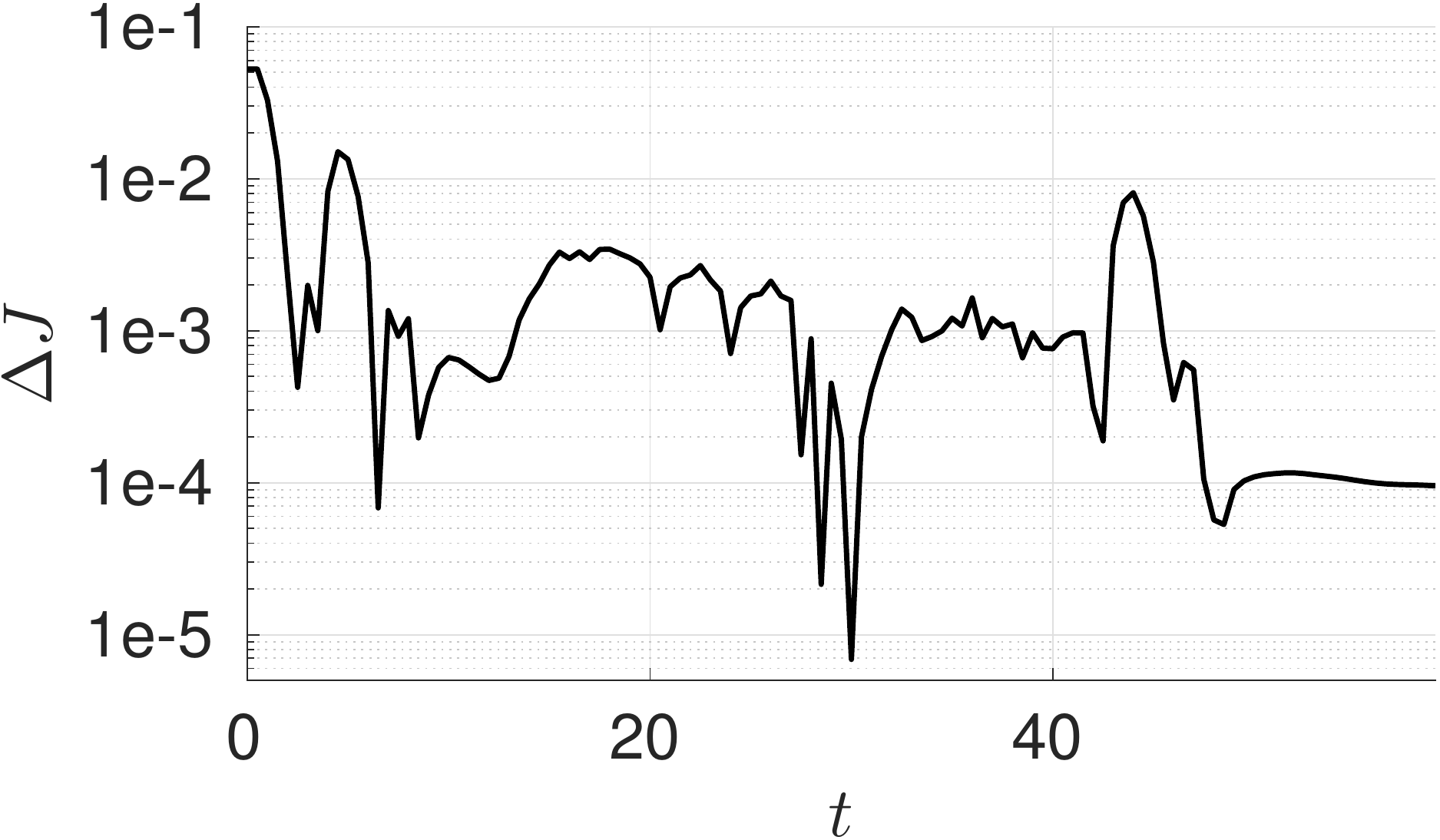}}
	\parbox[b]{0.45\textwidth}{\centering (e) \\ \includegraphics[width=.4\textwidth]{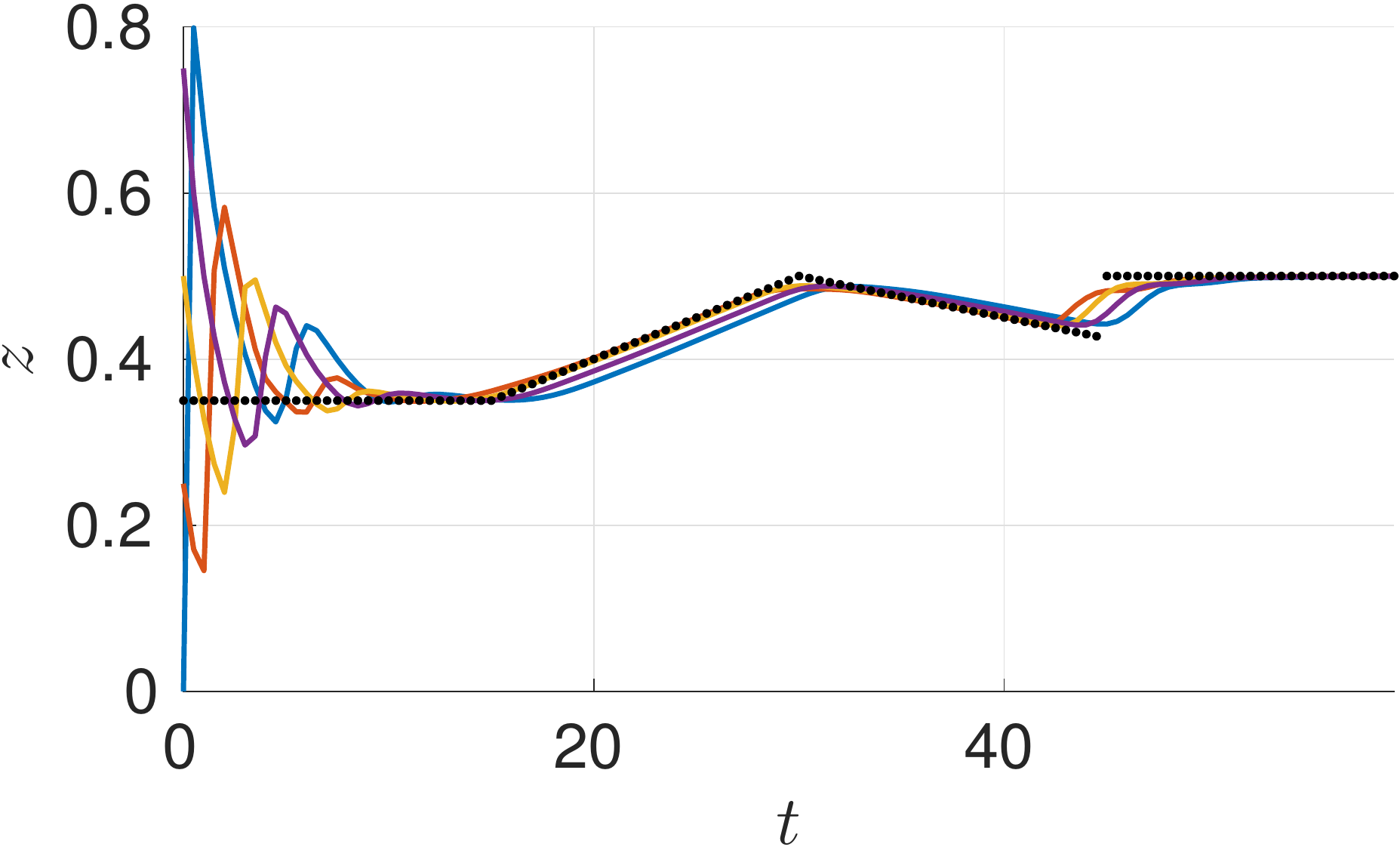}} \hfil
	\parbox[b]{0.45\textwidth}{\centering (f) \\ \includegraphics[width=.4\textwidth]{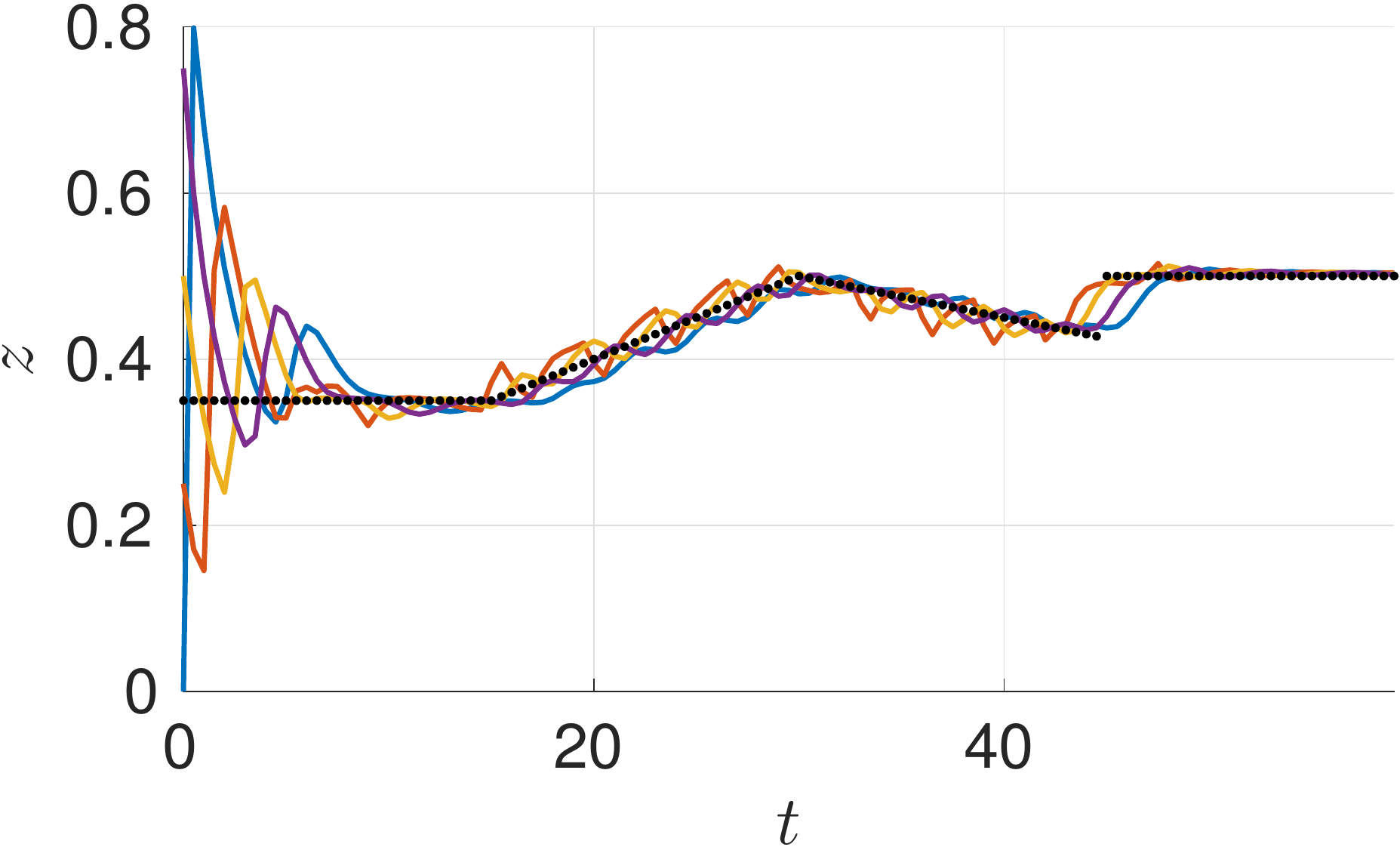}} \\[1ex]
	\parbox[b]{0.6\textwidth}{\centering (g) \\ \includegraphics[width=.52\textwidth]{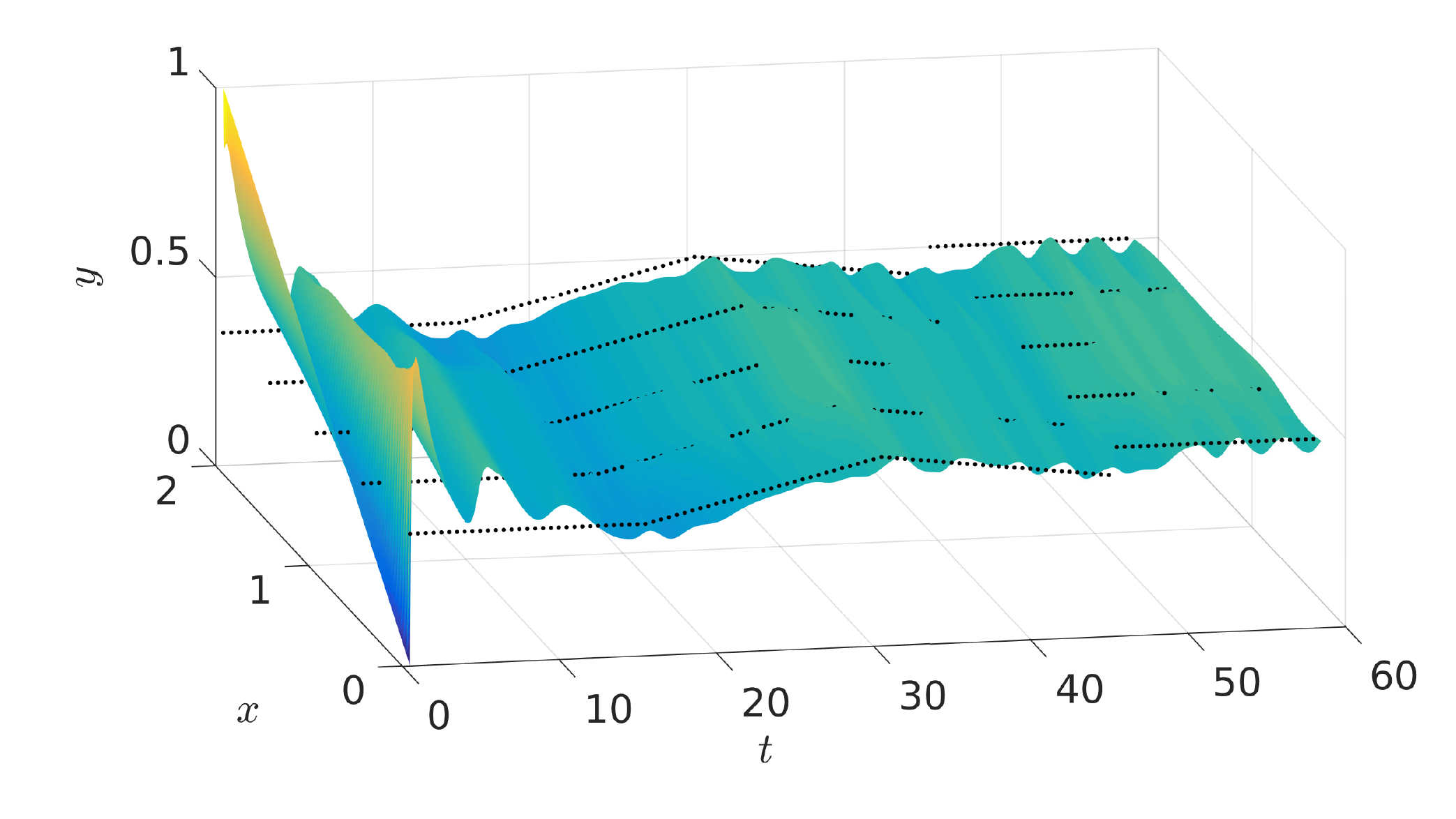}}
	\caption{(a) The shape function $\chi$ and the positions where the state is observed by $f$ for the objective function. (b) Comparison of the optimal control obtained by MPC using the continuous control system \eqref{eq:BurgersContinuous_MPC} (orange) and the switched system \eqref{eq:BurgersSwitched_MPC} (blue). (c) The corresponding objective function values. (d) Difference between the trajectories in (c), $\Delta J = |J_{\eqref{eq:BurgersContinuous_MPC}} - J_{\eqref{eq:BurgersSwitched_MPC}}|$. (e) Optimal trajectory of the observation $\vec{z}$ and the corresponding $\vec{z}^{\mathsf{opt}}$ for the continuous problem \eqref{eq:BurgersContinuous_MPC}. (f) Like (e) but for the switching problem \eqref{eq:BurgersSwitched_MPC}. (g) Optimal state trajectory $y$ corresponding to (e). The reference points $\vec{z}^{\mathsf{opt}}$ are shown in black.}
	\label{fig:Burgers}
\end{figure}

In order to enable comparability to our K-ROM approach, the objective function only depends on a few points in space (the black dots in Figure~\ref{fig:Burgers}~(a)), i.e.,
\begin{equation*}
	\vec{z}(t) = f(y(\cdot,t)) = \left(y(0,t), y(0.5,t), y(1,t), y(1.5,t)\right)^{\top},
\end{equation*}
and we formulate the tracking type objective function in terms of these observations only. This yields the following MPC problem:
\begin{equation}\label{eq:BurgersContinuous_MPC}
	\begin{aligned}
		\min_{\vec{u} \in \R^p} &\sum_{i=s}^{s+p-1} J_i = \|\vec{z}_i - \vec{z}_{i}^{\mathsf{opt}} \|_2^2 \\
		\text{s.t.}\quad y_{i+1} &= \Phi(y_i, \vec{u}_{i-s+1}), \quad i = s,\ldots,s + p - 1, \\
		y_s &= y^s,
	\end{aligned}
\end{equation}
with $\Phi(y, \vec{u})$ being the flow map of \eqref{eq:BurgersContinuous} and $y^s$ the initial value for the current iteration. Similar to Example~\ref{ex:VdP}, we now limit the control input to three constant values $u^0 = 0.075$, $u^1 = 0$ and $u^2 = -0.075$. 
By this, the right-hand side of \eqref{eq:BurgersContinuous} is transformed to $u^j\chi(x)$ which yields $\Phi_{u^j}(y)$ as the flow maps corresponding to the constant inputs $u^0$, $u^1$ and $u^2$. 
The MPC problem is transformed accordingly:
\begin{equation}\label{eq:BurgersSwitched_MPC}
	\begin{aligned} 
		\min_{\vec{\tau} \in \{u^0,u^{1},u^{2}\}^p} &\sum_{i=s}^{s+p-1} J_i = \|\vec{z}_i - \vec{z}_{i}^{\mathsf{opt}} \|_2^2 \\
		\mbox{s.t.}\quad y_{i+1} &= \Phi_{\vec{\tau}_{i-s+1}}(y_i), \quad i = s,\ldots,s + p - 1, \\
		y_s &= y^s,
	\end{aligned}
\end{equation}

The results for the two MPC problems \eqref{eq:BurgersContinuous_MPC} and \eqref{eq:BurgersSwitched_MPC} are compared in Figure~\ref{fig:Burgers}. We see in (b) that the switching approach results in inputs which are close to the continuous solution when averaging over small time frames. In (c) and (d) we see that this results in a solution with slightly reduced quality, i.e., the distance to the reference trajectory is slightly larger. Figures (e) and (f) show the observation $\vec{z}$ for the two problems, i.e., the dynamics which the controller acts on. Finally, the state $y$ corresponding to (e) is shown in (g).

\subsubsection{Switched system MPC based on K-ROMs}
Assuming that we have computed the $n_c$ K-ROMs (i.e., $\vec{U}_{u^0}$ to $\vec{U}_{u^{n_c}}$) via EDMD from data, we can now replace the expensive PDE evaluation in \eqref{eq:MPC_STO} by the much cheaper K-ROM:
\begin{equation}\label{eq:MPC_STO_Koopman} \tag{$\mbox{K-MPC}^{s}$}
	\begin{aligned} 
		&\min_{\hat{\vec{\tau}} \in \{u^0,\ldots,u^{n_c-1}\}^p} \sum_{i=s}^{s+p-1} \hat{L}(\psi(\vec{z}_i)) \\
		\mbox{s.t.}\quad \psi(\vec{z}_{i+1}) &= \vec{U}_{\hat{\vec{\tau}}_{i-s+1}}^{\top} \psi(\vec{z}_{i}) \quad \text{for}~i = s,\ldots,s + p - 1, \\
		\vec{z}_{s} &= f(\vec{y}^s),
	\end{aligned}
\end{equation}
where $\hat{L}$ is the reduced objective function formulated with respect to the observables. 
We now compare the two problem formulations~\eqref{eq:MPC_STO} and \eqref{eq:MPC_STO_Koopman}, i.e., the switching time MPC problem and the corresponding approximation using the K-ROM. To this end, we first assume that the full objective function $L$ can be expressed in terms of the observations $\vec{z}$:
\begin{assumption} \label{ass:equality_objectives_STO}
	$L(\vec{y}(t)) = \hat{L}(\psi(\vec{z}_{i}))$ for all $t\in[t_0,t_0 + h, t_0 + 2h, \ldots, t_e]$ and the corresponding $i = (t-t_0)/h$.
\end{assumption}
The assumption is not restrictive since this has to be satisfied in every application, where only observations are available (e.g., sensor data). Consequently, the objective $L$ has to be defined accordingly.

This assumption allows us to prove convergence for the K-ROM based MPC problem as the number of measurements $m$ and the basis size $k$ for the dictionary $\Psi$ tend to infinity (i.e., we have convergence for EDMD, see \cite{KM17} for details).

\begin{theorem}[\cite{PK17}]\label{thm:Convergence_MPC}
	Consider Problem \eqref{eq:MPC_STO} and the K-ROM based approximation \eqref{eq:MPC_STO_Koopman} and assume that we have convergence of EDMD towards the Koopman operator according to \cite{KM17}, i.e.,
	\[
		\lim_{k,m \rightarrow \infty} \vec{U}_{u^j}^\top = \Kop_{u^j} \quad \mbox{for } j = 1,\ldots, n_c.
	\]
	Then, under Assumption~\ref{ass:equality_objectives_STO}, the optimal solutions of \eqref{eq:MPC_STO} and \eqref{eq:MPC_STO_Koopman} are identical, i.e.,
	\begin{equation*}
		\hat{\vec{\tau}}^{*} = \vec{\tau}^*.
	\end{equation*}
\end{theorem}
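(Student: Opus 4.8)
The plan is to exploit the fact that \eqref{eq:MPC_STO} and \eqref{eq:MPC_STO_Koopman} are optimization problems over the \emph{same} finite feasible set $\{u^0,\ldots,u^{n_c-1}\}^p$, which has exactly ${n_c}^p$ elements. It therefore suffices to show that, in the limit $k,m\to\infty$, the two objective functions take the same value at every admissible switching sequence; equality of the minimizers then follows immediately (reading $\hat{\vec\tau}^{*}=\vec\tau^{*}$ as equality of the sets of minimizers, or invoking uniqueness of the minimizer of \eqref{eq:MPC_STO}).

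First I would fix an arbitrary switching sequence $\vec\tau=(\vec\tau_1,\dots,\vec\tau_p)\in\{u^0,\dots,u^{n_c-1}\}^p$ together with the common initial data $\vec y_s=\vec y^s$, $\vec z_s=f(\vec y^s)$, and compare the true switched-PDE trajectory $\vec y_{i+1}=\Phi_{\vec\tau_{i-s+1}}(\vec y_i)$ with the K-ROM trajectory $\psi(\vec z_{i+1})=\vec U_{\vec\tau_{i-s+1}}^{\top}\psi(\vec z_i)$. I claim $\psi(\vec z_i)=\psi\big(f(\vec y_i)\big)$ for $i=s,\dots,s+p$, proved by induction on $i$. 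The base case is immediate from $\vec z_s=f(\vec y^s)$. For the induction step I would use the hypothesis $\lim_{k,m\to\infty}\vec U_{u^j}^{\top}=\Kop_{u^j}$ — which is exactly the statement that the regression matrix converges to the operator sending the observable $\psi\circ f$ to $(\psi\circ f)\circ\Phi_{u^j}$ — so that, in the limit,
\[
  \vec U_{\vec\tau_{i-s+1}}^{\top}\psi(\vec z_i)
   = \big(\Kop_{\vec\tau_{i-s+1}}[\psi\circ f]\big)(\vec y_i)
   = \psi\big(f(\Phi_{\vec\tau_{i-s+1}}(\vec y_i))\big)
   = \psi\big(f(\vec y_{i+1})\big),
\]
the middle equality being the defining identity $(\Kop_{u^j}g)(\vec y)=g(\Phi_{u^j}(\vec y))$ applied componentwise with $g=\psi_\ell\circ f$. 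Hence the K-ROM reproduces the true observation sequence $\vec z_i=f(\vec y_i)$ exactly in the limit. Having identified the trajectories, Assumption~\ref{ass:equality_objectives_STO} closes the argument: $\hat L(\psi(\vec z_i))=L(\vec y_i)$ for every grid index $i=s,\dots,s+p-1$, so the objective of \eqref{eq:MPC_STO_Koopman} at $\vec\tau$ equals $\sum_{i=s}^{s+p-1}L(\vec y_i)$, the objective of \eqref{eq:MPC_STO} at $\vec\tau$. Since $\vec\tau$ was arbitrary in the finite feasible set, the objectives coincide and so do their minimizers.

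The step that most deserves care, and the main obstacle, is the precise sense in which ``$\vec U_{u^j}^{\top}\to\Kop_{u^j}$'' is invoked in the induction. The convergence result of \cite{KM17} is an operator/$L^2(\mu)$ statement obtained as the dictionary becomes a complete orthonormal system of $\Fcal=L^\infty(\SetState)$, whereas what the induction needs is a \emph{pointwise} statement along the finitely many, finite-horizon trajectories generated by the admissible switching sequences: that the projection error of $\psi_\ell\circ f$ onto the dictionary span vanishes, and does so at the relevant states $\vec y_i$. One must also check $\psi_\ell\circ f\in\Fcal$ and $(\psi_\ell\circ f)\circ\Phi_{u^j}\in\Fcal$, so that the one-step update is genuinely represented by the limiting operator. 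For the finite-$k$, finite-$m$ version one instead runs the induction with an error term $\varepsilon(k,m)\to 0$ propagated through the $p$ steps using (Lipschitz) continuity of $\Phi_{u^j}$, $f$, $\psi$ and $\hat L$; since $p$ is fixed and the feasible set finite, the objective values converge uniformly over $\{u^0,\dots,u^{n_c-1}\}^p$, and a strictly positive optimality gap $\delta=\min_{\vec\tau\neq\vec\tau^{*}}J(\vec\tau)-J(\vec\tau^{*})>0$ (again using uniqueness of $\vec\tau^{*}$) then forces $\hat{\vec\tau}^{*}=\vec\tau^{*}$ for all sufficiently large $k,m$.
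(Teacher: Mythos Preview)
The paper does not actually prove Theorem~\ref{thm:Convergence_MPC}; it is stated with a citation to~\cite{PK17} and no argument is given here. Your approach---observing that \eqref{eq:MPC_STO} and \eqref{eq:MPC_STO_Koopman} share the finite feasible set $\{u^0,\dots,u^{n_c-1}\}^p$, then showing by induction that the K-ROM trajectory satisfies $\psi(\vec z_i)=\psi(f(\vec y_i))$ in the EDMD limit, and finally invoking Assumption~\ref{ass:equality_objectives_STO} to identify the objectives---is the natural one and is almost certainly what~\cite{PK17} does. The one-line proof of the companion Theorem~\ref{thm:Convergence_MPC_bilinear} in this paper (``follows directly from Lemma~\ref{lem:Koopman_continuous} and Corollary~\ref{cor:EqualDynamics}'') confirms that the intended level of formality matches your first two paragraphs.

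Your second paragraph goes beyond what the paper (or, likely,~\cite{PK17}) makes explicit: the distinction between the $L^2(\mu)$/operator convergence supplied by~\cite{KM17} and the pointwise-along-trajectories statement your induction actually consumes is a genuine technical gap that the paper simply elides by writing $\lim_{k,m\to\infty}\vec U_{u^j}^\top=\Kop_{u^j}$ and treating it as an exact identity. Your proposed remedy---propagating an error $\varepsilon(k,m)$ through the fixed horizon $p$ via Lipschitz continuity, then using the strictly positive optimality gap over the finite feasible set---is the right way to make this rigorous, though it does require an additional uniqueness assumption on $\vec\tau^{*}$ (or else the conclusion should be read as equality of argmin sets), which neither the theorem statement nor the paper supplies.
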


As we have seen in Table~\ref{tab:Sampling} for two example problems, this reduced order modeling approach allows us to reduce the numerical effort by several orders of magnitude. A more detailed discussion regarding the numerical benefits will follow in Section~\ref{sec:InfluenceDataBasis}.

\subsection{Bilinear models via linear interpolation}
\label{subsec:KROM_Bilinear}

The switched systems approach allows us to replace the system dynamics by a K-ROM in a straightforward manner. However, this approach comes at a prize, namely that we now have a combinatorial optimization problem which is often more expensive to solve. Furthermore, we have limited the control input to a small number of predefined values. Although this does not necessarily have a major impact on the control performance as we have seen in Section~\ref{subsec:KROM_Switched}, we would nonetheless like to allow for arbitrary control inputs. One possibility to do so is to approximate a Koopman operator for an augmented observation, i.e., 
\[
	\hat{\vec{z}} = \left(\begin{array}{c}
		\vec{z} \\ u
	\end{array}\right).
\]
This approach is pursued in \cite{PBK15} for open-loop and in \cite{KM16} for closed-loop control problems. In order to approximate the Koopman operator for $\hat{\vec{z}}$, data for different combinations of states and inputs has to be collected. In order to reduce the amount of required data, we here pursue an alternative approach where we still only approximate Koopman operators for a small number of autonomous systems. In order to allow for continuous controls, we now define the matrices
\begin{equation*}
	\vec{A} = \vec{U}^{\top}_{u^0} \quad \mbox{and} \quad \vec{B} = \vec{U}^{\top}_{u^1} - \vec{U}^{\top}_{u^0}
\end{equation*}
and introduce the bilinear control system
\begin{equation} \label{eq:KROM_continuous} 
	\begin{aligned}
	\psi(\vec{z}_{i+1}) &= \vec{A} \psi(\vec{z}_{i}) + \vec{B} \psi(\vec{z}_{i}) \frac{u_i - u^0}{u^1 - u^0}, \\
	\vec{z}_0 &= \vec{z}^0 = f(\vec{y}^0).
	\end{aligned}
\end{equation}
The term bilinear refers to the fact that \eqref{eq:KROM_continuous} contains a term $\psi(\vec{z}) \cdot u$ but is otherwise linear both in $\psi(\vec{z})$ and in $u$ \cite{Ell09}. We set the lower and upper bound for the input to $u^0$ and $u^1$, respectively. Consequently, for $u_i \in [u^0,u^1]$, we simply interpolate linearly between the two autonomous dynamics corresponding to $u^0$ and $u^1$. Provided that we have convergence of EDMD, system \eqref{eq:KROM_continuous} is equal to the observation of the exact dynamics for $u^0$ and $u^1$. In order to show convergence for intermediate values, we have to introduce another assumption, namely that the flow map $\Phi(\vec{y},u)$ depends linearly on $u$.
\begin{lemma}[\cite{Pei18}]\label{lem:Koopman_continuous}
	Consider a dynamical control system of the form 
	\begin{align*}
		\vec{y}_{i+1} &= \Phi(\vec{y}_{i},u_i), \\
		\vec{y}_0 &= \vec{y}^0,
	\end{align*}
	and let $\Kop_{u^0}$ and $\Kop_{u^1}$ be the Koopman operators associated with the constant control inputs $u^0$ and $u^1$.
	
	Assume that the observation map $f$ is linear and that the system dynamics $\Phi$ are linear in $u$. Then, a linear interpolation between the two operators is equal to the Koopman operator for the linear interpolation between the controls $u^0$ and $u^1$, i.e.,
	\begin{equation*}
		\alpha \Kop_{u^0} + (1 - \alpha) \Kop_{u^1} = \Kop_{\alpha u^0 + (1-\alpha) u^1}, \quad \alpha \in [0,1].
	\end{equation*}
\end{lemma}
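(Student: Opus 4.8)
\emph{Proof idea.} The plan is to peel back the definitions of the three Koopman operators, turn the claimed operator identity into a single pointwise statement, and then read it off directly from the two linearity hypotheses; no analytic machinery (not even the EDMD convergence result) is needed for the lemma itself.

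First I would fix $\alpha \in [0,1]$ and abbreviate $u^\alpha := \alpha u^0 + (1-\alpha) u^1$, the interpolated constant input. For an observable $g$ and a state $\vec{y}$, the relevant operators act by $(\Kop_{u^0} g)(\vec{y}) = g(\Phi(\vec{y}, u^0))$, $(\Kop_{u^1} g)(\vec{y}) = g(\Phi(\vec{y}, u^1))$ and $(\Kop_{u^\alpha} g)(\vec{y}) = g(\Phi(\vec{y}, u^\alpha))$, so the assertion is equivalent to
\[
  g\big(\Phi(\vec{y}, u^\alpha)\big) = \alpha\, g\big(\Phi(\vec{y}, u^0)\big) + (1-\alpha)\, g\big(\Phi(\vec{y}, u^1)\big)
\]
for every relevant $g$ and every $\vec{y}$.

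Next I would bring in the hypothesis that $\Phi$ is linear in its control argument: since $u^\alpha$ is the convex combination $\alpha u^0 + (1-\alpha)u^1$, this gives $\Phi(\vec{y}, u^\alpha) = \alpha \Phi(\vec{y}, u^0) + (1-\alpha)\Phi(\vec{y}, u^1)$ for all $\vec{y}$. Substituting into the left-hand side above and then using that the observation map $f$ is linear --- so that $g$, being built linearly from the coordinates of $\vec{z} = f(\vec{y})$, commutes with this convex combination of states --- immediately produces the right-hand side, and since $\vec{y}$ and $g$ were arbitrary the operator identity follows. This is essentially a two-line computation.

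The step that needs the most care --- and where I would be explicit --- is the class of observables on which the identity is claimed. The pointwise argument uses only linearity of $\Phi$ in $u$ and linearity of $g$; it therefore holds for affine observables, which is exactly the subspace generated by the linear measurement map $f$ that drives the bilinear surrogate \eqref{eq:KROM_continuous}. I would accordingly state and use the lemma with $\Kop_{u^j}$ understood as restricted to (or, equivalently, with the identity read at the level of the linear part of) the lifted observables, rather than as an identity on all of $\Fcal = L^{\infty}(\SetState)$, where a higher-order dictionary element $\psi_\ell$ would break the commutation with convex combinations. Pinning down this function space is really the only non-routine point; everything else is the substitution above.
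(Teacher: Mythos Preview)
Your proof is correct and follows essentially the same route as the paper's: unpack the Koopman-operator definitions and apply the two linearity hypotheses in sequence to verify the pointwise identity (the paper uses linearity of $f$ first and then linearity of $\Phi$ in $u$, but the order is immaterial). Your explicit caveat that the identity holds only on the subspace of linear observables generated by $f$, and not on all of $\Fcal = L^\infty(\SetState)$, is a useful clarification that the paper's one-line computation leaves implicit.
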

\begin{proof}
	The claim follows directly from the linearity assumptions:
	\begin{align*}
		\big(\left(\alpha \Kop_{u^0} + (1 - \alpha) \Kop_{u^1}\right) f\big) \vec{y}&= \big(\alpha \Kop_{u^0} f\big) \vec{y} + \big((1 - \alpha) \Kop_{u^1} f\big) \vec{y} \\
		&= \alpha f(\Phi(\vec{y}, u^0)) + (1-\alpha) f(\Phi(\vec{y}, u^1)) \\
		&= f\big(\alpha \Phi(\vec{y}, u^0) + (1-\alpha) \Phi(\vec{y}, u^1)\big) \\
		&= f\big(\Phi(\vec{y}, \alpha u^0 + (1-\alpha) u^1)\big) \\
		&= \big(\Kop_{\alpha u^0 + (1-\alpha) u^1} f\big)\vec{y}. 
	\end{align*}
\end{proof}
The above lemma hence yields convergence of the bilinear K-ROM:
\begin{corollary}\label{cor:EqualDynamics}
	The observations of the state of the full dynamical system are equal to the solution of \eqref{eq:KROM_continuous} provided that we have convergence of the EDMD algorithm, i.e.,
	\begin{equation*}
		f(\Phi(\vec{y}_{i},u_i)) = P \left( \vec{A} \psi(f(\vec{y}_{i})) + \vec{B} \psi(f(\vec{y}_{i})) \frac{u_i - u^0}{u^1 - u^0} \right).
	\end{equation*}
\end{corollary}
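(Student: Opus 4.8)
The plan is to reduce the statement to Lemma~\ref{lem:Koopman_continuous} together with the linearity of the reconstruction map $P$ and the assumed convergence of EDMD. First I would reparametrize the interpolation weight. Since the admissible inputs satisfy $u_i \in [u^0, u^1]$, set $\beta = \frac{u_i - u^0}{u^1 - u^0} \in [0,1]$, so that $u_i = (1-\beta)\,u^0 + \beta\,u^1$. Substituting the definitions $\vec{A} = \vec{U}^{\top}_{u^0}$ and $\vec{B} = \vec{U}^{\top}_{u^1} - \vec{U}^{\top}_{u^0}$ into the right-hand side of \eqref{eq:KROM_continuous} and collecting terms turns the bilinear update into a convex combination of the two autonomous updates,
\begin{equation*}
	\vec{A}\,\psi(\vec{z}_i) + \vec{B}\,\psi(\vec{z}_i)\,\frac{u_i - u^0}{u^1 - u^0}
	= (1-\beta)\,\vec{U}^{\top}_{u^0}\,\psi(\vec{z}_i) + \beta\,\vec{U}^{\top}_{u^1}\,\psi(\vec{z}_i),
	\qquad \vec{z}_i = f(\vec{y}_i).
\end{equation*}

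Next I would apply the reconstruction map $P$. Since the dictionary $\psi$, and hence $P$, does not depend on the chosen constant input, the same $P$ serves both terms, and by linearity of $P$ the right-hand side becomes $(1-\beta)\,P\,\vec{U}^{\top}_{u^0}\,\psi(f(\vec{y}_i)) + \beta\,P\,\vec{U}^{\top}_{u^1}\,\psi(f(\vec{y}_i))$. Invoking the assumed EDMD convergence $\lim_{k,m\to\infty}\vec{U}^{\top}_{u^j} = \Kop_{u^j}$ for $j=0,1$, together with the relation $P \circ \vec{U}^{\top}_{u^j} \circ \psi \circ f = \Kop_{u^j} f$ from Section~\ref{sec:Koopman} (cf.\ Figure~\ref{fig:Koopman_EDMD}) and the defining property $(\Kop_{u^j} f)(\vec{y}) = f(\Phi(\vec{y}, u^j))$, each summand collapses to $P\,\vec{U}^{\top}_{u^j}\,\psi(f(\vec{y}_i)) = f(\Phi(\vec{y}_i, u^j))$. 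Hence the right-hand side equals $(1-\beta)\,f(\Phi(\vec{y}_i, u^0)) + \beta\,f(\Phi(\vec{y}_i, u^1))$.

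It remains to recombine the two exact samples into one. By Lemma~\ref{lem:Koopman_continuous} — or equivalently by reusing the one-line computation in its proof, which only uses linearity of $f$ and linearity of $\Phi$ in $u$ — we get
\begin{equation*}
	(1-\beta)\,f(\Phi(\vec{y}_i, u^0)) + \beta\,f(\Phi(\vec{y}_i, u^1))
	= f\big(\Phi(\vec{y}_i,\,(1-\beta)\,u^0 + \beta\,u^1)\big)
	= f(\Phi(\vec{y}_i, u_i)),
\end{equation*}
which is exactly the asserted identity. The computation is routine; the only genuine subtlety is the middle step, i.e.\ making precise in what sense $P \circ \vec{U}^{\top}_{u^j} \circ \psi$ reproduces $\Kop_{u^j}$ acting on the observable $f$. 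This is where the EDMD convergence hypothesis of \cite{KM17} (orthonormal dictionary, i.i.d.\ sampling with respect to $\mu$, bounded Koopman operator, $k,m\to\infty$) enters, and one must keep track of the fact that $P$ is fixed independently of the constant input $u^j$ so that it can be pulled linearly through the interpolation before the limit is invoked.
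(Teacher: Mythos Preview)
Your proposal is correct and follows the same line as the paper: the paper gives no explicit proof for the corollary, stating only that ``the above lemma hence yields convergence of the bilinear K-ROM,'' and your argument is precisely the natural unpacking of that sentence---rewrite the bilinear update as a convex combination of $\vec{U}^{\top}_{u^0}$ and $\vec{U}^{\top}_{u^1}$, invoke EDMD convergence for each autonomous piece, and then use the linearity identities from Lemma~\ref{lem:Koopman_continuous} to recombine. Nothing is missing and no step is different in spirit from what the paper intends.
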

In a similar fashion to the switched systems approach, we can now easily reduce the numerical effort of \eqref{eq:MPC} (i.e., the MPC problem with continuous inputs) by replacing the system dynamics by the bilinear surrogate model:
\begin{equation}\label{eq:MPC_Bilinear_Koopman}\tag{K-MPC}
	\begin{aligned}
		&~\qquad\qquad\qquad\min_{\vec{u} \in \R^p} \sum_{i=s}^{s+p-1} \hat{L}(\psi(\vec{z})_i) \\
		\psi(\vec{z})_{i+1} &= \vec{A} \psi(\vec{z})_{i} + \vec{B} \psi(\vec{z})_{i} \frac{\vec{u}_{i-s+1} - u^0}{u^1 - u^0} \quad \text{for}~i = s,\ldots,s + p - 1, \\
		\vec{z}_s &= f(\vec{y}^s).
	\end{aligned}
\end{equation}
Convergence of the reduced problem now follows immediately from Lemma~\ref{lem:Koopman_continuous}.
\begin{theorem}\label{thm:Convergence_MPC_bilinear} 
	Consider Problem \eqref{eq:MPC} with a control system $\Phi(\vec{y},u)$ which depends linearly on $u$ and the K-ROM based approximation \eqref{eq:MPC_Bilinear_Koopman}. Assume that we have convergence of EDMD towards the Koopman operator according to \cite{KM17}, i.e.,
	\[
	\lim_{k,m \rightarrow \infty} \vec{U}_{u^j}^\top = \Kop_{u^j} \quad \mbox{for } j = 1,\ldots, n_c.
	\]
	Then, under Assumption~\ref{ass:equality_objectives_STO}, Problems \eqref{eq:MPC_Bilinear_Koopman} and \eqref{eq:MPC} possess the same solution.
\end{theorem}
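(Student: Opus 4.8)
The plan is to obtain this theorem in exactly the same way Theorem~\ref{thm:Convergence_MPC} is obtained for the switched case: reduce the statement to Lemma~\ref{lem:Koopman_continuous} together with Corollary~\ref{cor:EqualDynamics}. The key fact to establish is that, in the EDMD limit $k,m\to\infty$, the bilinear surrogate \eqref{eq:KROM_continuous} reproduces the \emph{observed} trajectory of the true system exactly for every admissible input sequence; once this is known, Assumption~\ref{ass:equality_objectives_STO} forces the two MPC cost functionals to agree pointwise on a common feasible set, and hence to have identical sets of minimizers.

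First I would fix an arbitrary input sequence $\vec{u}\in[u^0,u^1]^p$ (the range on which Lemma~\ref{lem:Koopman_continuous} applies) and prove, by induction over the prediction horizon $i=s,\dots,s+p-1$, that the true observation $\vec{z}_i=f(\vec{y}_i)$ produced by the constraint in \eqref{eq:MPC} coincides with the reconstruction $P(\psi(\vec{z})_i)$ produced by the bilinear model in \eqref{eq:MPC_Bilinear_Koopman}. The base case $i=s$ is immediate, since $\vec{z}_s=f(\vec{y}^s)$ in both problems. For the inductive step, the linearity of $f$ and the linearity of $\Phi$ in $u$ let me write $u_i$ as a convex combination of $u^0$ and $u^1$ and apply Lemma~\ref{lem:Koopman_continuous} with the corresponding coefficient $(u_i-u^0)/(u^1-u^0)$, which identifies $\Kop_{u_i}$ with $\alpha\Kop_{u^0}+(1-\alpha)\Kop_{u^1}$ and therefore, in the EDMD limit, its representation with $\vec{A}+\vec{B}\,(u_i-u^0)/(u^1-u^0)$. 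Invoking Corollary~\ref{cor:EqualDynamics} at state $\vec{y}_i$ together with the induction hypothesis $\psi(\vec{z})_i=\psi(f(\vec{y}_i))$ then yields $\vec{z}_{i+1}=P(\psi(\vec{z})_{i+1})$, which closes the induction.

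With the trajectories of \eqref{eq:MPC} and \eqref{eq:MPC_Bilinear_Koopman} thus identified, Assumption~\ref{ass:equality_objectives_STO} gives $L(\vec{y}_i)=\hat{L}(\psi(\vec{z})_i)$ for every $i$ in the horizon, hence
\[
	\sum_{i=s}^{s+p-1} L(\vec{y}_i) = \sum_{i=s}^{s+p-1} \hat{L}(\psi(\vec{z})_i)
\]
for every admissible $\vec{u}$. Since both problems minimize over the same box $[u^0,u^1]^p$ (restricting \eqref{eq:MPC} to the bounds that are already built into the bilinear model) and the two cost functionals coincide as functions of $\vec{u}$, their argmin sets are identical; in particular any optimizer $\vec{u}^*$ of \eqref{eq:MPC_Bilinear_Koopman} is an optimizer of \eqref{eq:MPC}, which is the claim.

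The main obstacle is the inductive step: one must ensure that repeatedly applying $\vec{A}+\vec{B}(\cdot)$ keeps the lifted state consistent with $\psi\circ f$ along the whole horizon, not merely for a single step. This is where the limit assumption of \cite{KM17} (orthonormality of the dictionary, so that the Koopman operator leaves $\overline{\mathrm{span}}\{\psi_i\}$ invariant) and the linearity hypotheses on $f$ and on $\Phi$ are genuinely used. A secondary point to state carefully is the control domain: Lemma~\ref{lem:Koopman_continuous} only certifies the interpolation for $u\in[u^0,u^1]$, so the equivalence is to be read for the box-constrained versions of the two problems; outside this interval the bilinear model extrapolates and equality need not hold.
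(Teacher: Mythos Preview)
Your proposal is correct and follows exactly the route the paper takes: the paper's proof consists of the single sentence ``The claim follows directly from Lemma~\ref{lem:Koopman_continuous} and Corollary~\ref{cor:EqualDynamics},'' and your argument is simply the natural unpacking of that sentence via induction over the horizon together with Assumption~\ref{ass:equality_objectives_STO}. Your remarks on the inductive consistency of the lifted state and on the restriction to $u\in[u^0,u^1]$ are sensible clarifications that the paper leaves implicit.
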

\begin{proof}
	The claim follows directly from Lemma~\ref{lem:Koopman_continuous} and Corollary~\ref{cor:EqualDynamics}.
\end{proof}

\begin{remark}
	\eqref{eq:MPC_Bilinear_Koopman} is a bilinear control problem. Efficient algorithms specifically tailored to this problem class exist, see, e.g., \cite{PY08,Ell09}. An alternative to using MPC would be, for instance, to solve a state-dependent Riccati equation \cite{Cim08} in order to obtain a closed-loop controller.
	\exampleSymbol
\end{remark}

\begin{remark}\label{rem:Local_Bilinear_K-ROMs}
	Lemma \ref{lem:Koopman_continuous} and Theorem \ref{thm:Convergence_MPC_bilinear} are only valid if the control system $\Phi(\vec{y},u)$ depends linearly on $u$ which is not always the case. (In \cite{Pei18}, the influence of nonlinear control dependencies has been studied.) In these situations, a way to reduce the inaccuracy of the bilinear system \eqref{eq:KROM_continuous} is to introduce multiple bilinear K-ROMs which are valid locally, which is inspired by similar concepts in the reduced-basis community, see, e.g., \cite{AHKO12}. Consequently, the linear interpolation is performed between two operators which are less far apart in terms of the control input. This means that we approximate several Koopman operators $\vec{U}_{u^j}$ corresponding to $u^0 < u^1 < \ldots < u^{n_c - 1}$. The K-ROM then consists of several locally valid bilinear models:
	\begin{equation}\label{eq:KROM_local}
		\begin{aligned}
			\big[A,~B\big] = \begin{cases}
			\big[\vec{U}_{u^0},~\vec{U}_{u^1} - \vec{U}_{u^1}\big] & \text{for } u \in [u^0, u^1) \\
			\big[\vec{U}_{u^1},~\vec{U}_{u^2} - \vec{U}_{u^2}\big] & \text{for } u \in [u^1, u^2) \\
			\qquad \qquad \vdots  & \qquad \quad \vdots \\
			\big[\vec{U}_{u^{n_c - 2}},~\vec{U}_{u^{n_c - 1}} - \vec{U}_{u^{n_c - 2}}\big] & \text{for } u \in [u^{n_c - 2}, u^{n_c - 1}].
			\end{cases}
		\end{aligned}
	\end{equation}
	Note that due to this, the control system is continuous and piece-wise smooth with possible kinks at $u^1, u^2,\ldots, u^{n_c - 2}$ which has to be taken into account by the optimization routine solving \eqref{eq:MPC_Bilinear_Koopman}.
	\exampleSymbol
\end{remark}

\section{On the influence of the amount of data and the selection of basis functions}
\label{sec:InfluenceDataBasis}

In this section, we study the two examples already mentioned in Table~\ref{tab:Sampling} in more detail. 
Since the convergence result for EDMD only holds for infinitely large dictionaries $\Psi$ as well as infinitely many data points, the assumptions of the convergence theorems are obviously not satisfied in a practical setting.
This means that we need to investigate the influence of the basis functions used in the construction of the dictionary $\Psi$ as well as the impact of the amount of training data on the controller performance. Furthermore, we compare the two K-ROM approaches against each other and against the full solution. The latter is only possible for the Burgers equation as the numerical effort for solving the Navier--Stokes based MPC problem is prohibitively large.

\subsection{Test cases and reference setup}
Here, we first introduce the two test cases and validate the K-ROM approach using one particular numerical setup. All algorithms except the Navier--Stokes simulations are implemented in Matlab. For the switched-systems optimization, all possible ${n_c}^p$ inputs $\vec{\tau}$ are evaluated as motivated in Section~\ref{subsec:KROM_Switched}. For the bilinear K-ROM, the Matlab function \emph{fmincon} is applied which uses a sequential quadratic programming (SQP, see~\cite{NW06}) approach.

\subsubsection{The 1D Burgers Equation}
\label{subsubsec:Burgers}
We now compare the solutions obtained by the full control problems and their K-ROM approximations, respectively, 
using the problem setup introduced in Section~\ref{subsubsec:ExampleBurgers}. For the switched system approach, we choose the inputs $u^0 = -0.075$, $u^1 = 0.075$, and $u^2 = 0$. For the bilinear surrogate model, we use $u^0$ and $u^1$ to construct $A$ and $B$. For the data collection process, we use three different initial conditions and for each of these, we perform one simulation with constant inputs $u^0$, $u^1$ and $u^2$, respectively. Finally, we perform an additional simulation with a constant switching sequence between the inputs. This yields 12 simulations in total, each of which is 60 seconds long, and we collect a snapshot every $0.005$ seconds. The switched sequences are then split into three matrices according to which input is active during which time step. These data points are then attached to the respective snapshot matrices with constant inputs. 
The time step $h$ for the flow map $\Phi$ in the control problem as well as for the approximation of the Koopman operator is set to $0.5$ seconds.

The performance of the reduced approaches is visualized in Figure~\ref{fig:Burgers_STO_Full_vs_KROM} (for a prediction horizon of length $p=3$) where the switched approaches are compared in the left column and the continuous ones in the right column. 
\begin{figure}[h!]
	\centering
	\parbox[b]{0.45\textwidth}{\centering (a) \\ \includegraphics[width=.43\textwidth]{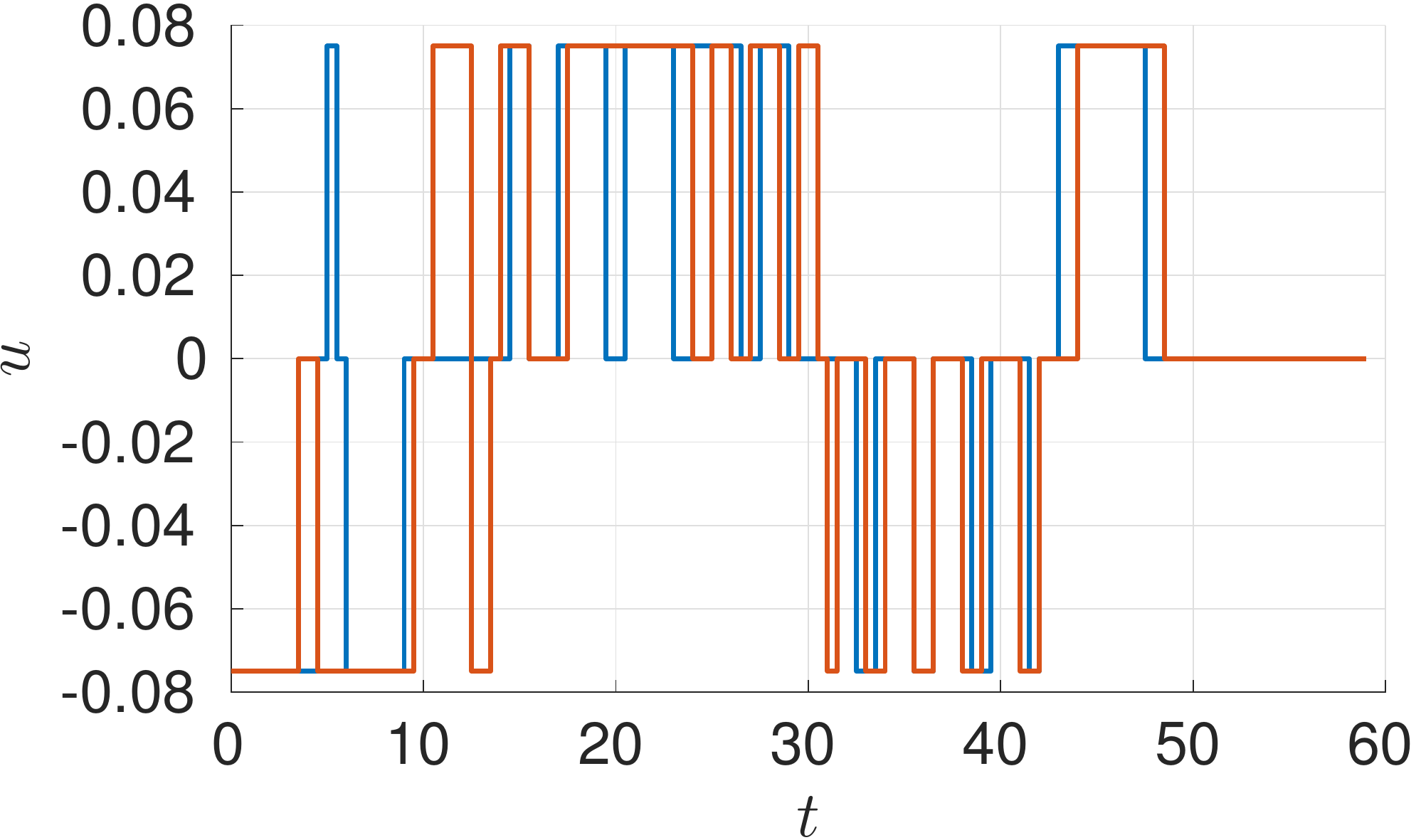}} \hfil
	\parbox[b]{0.45\textwidth}{\centering (b) \\ \includegraphics[width=.43\textwidth]{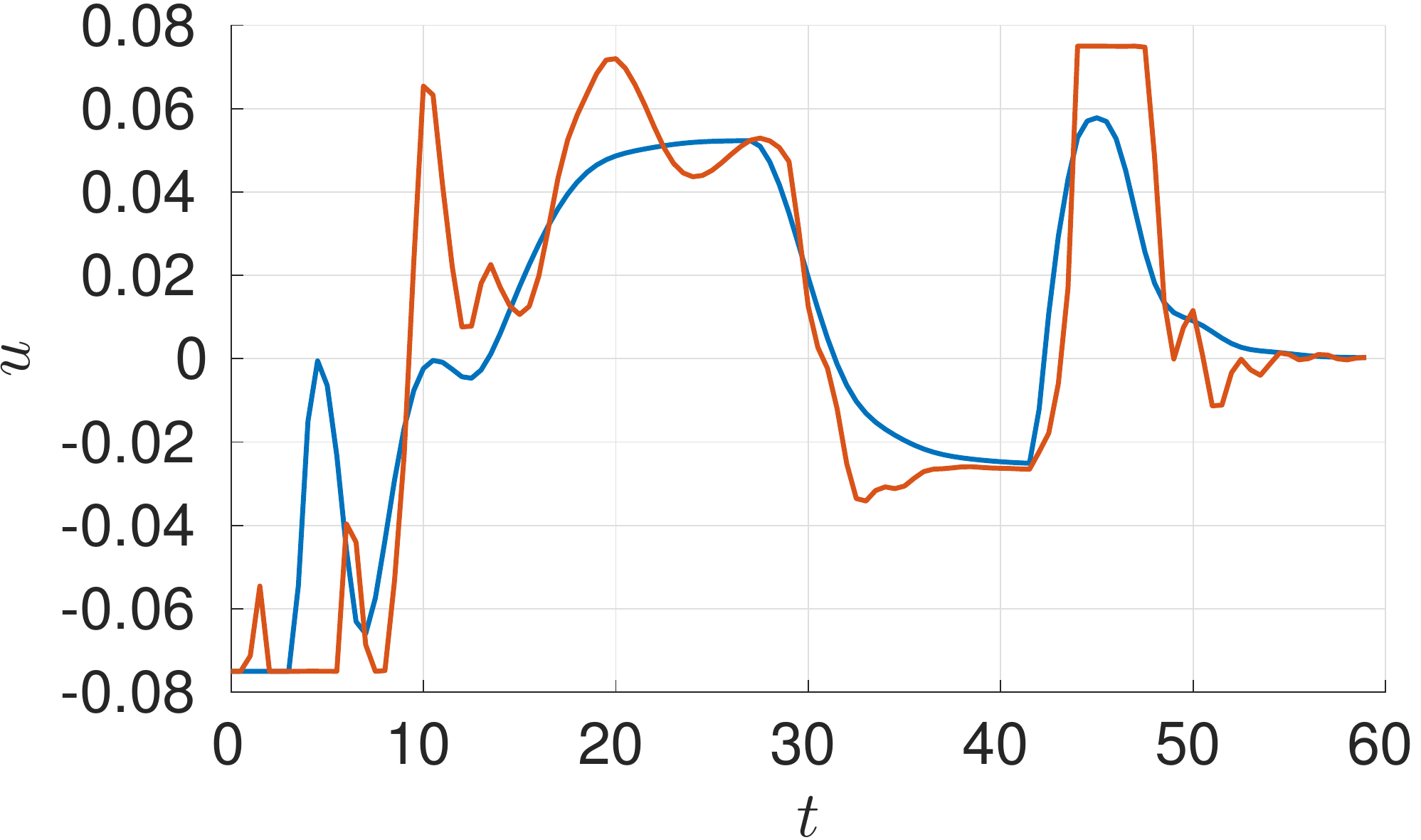}} \\[1ex]
	\parbox[b]{0.45\textwidth}{\centering (c) \\ \includegraphics[width=.43\textwidth]{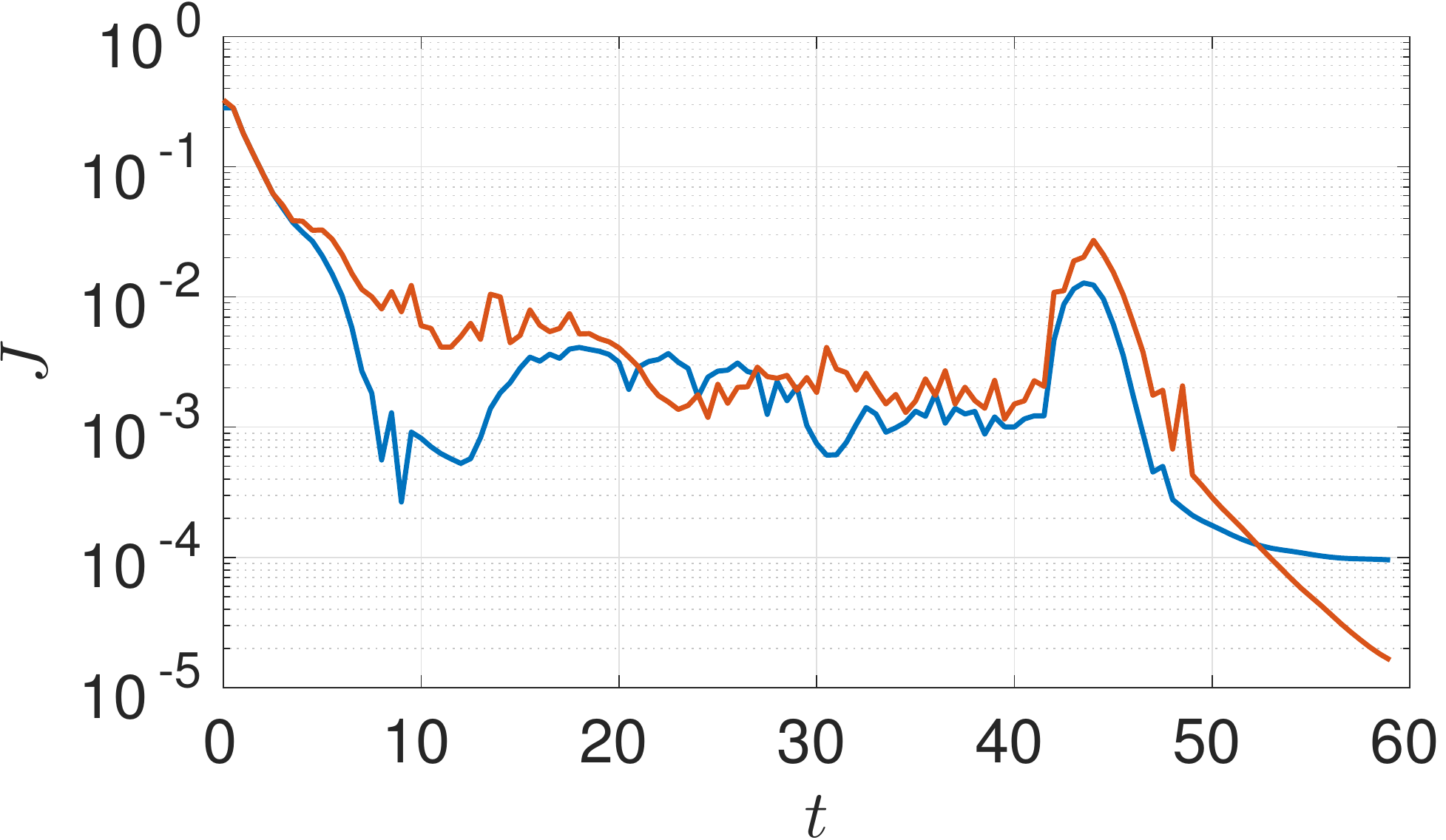}}\hfil
	\parbox[b]{0.45\textwidth}{\centering (d) \\ \includegraphics[width=.43\textwidth]{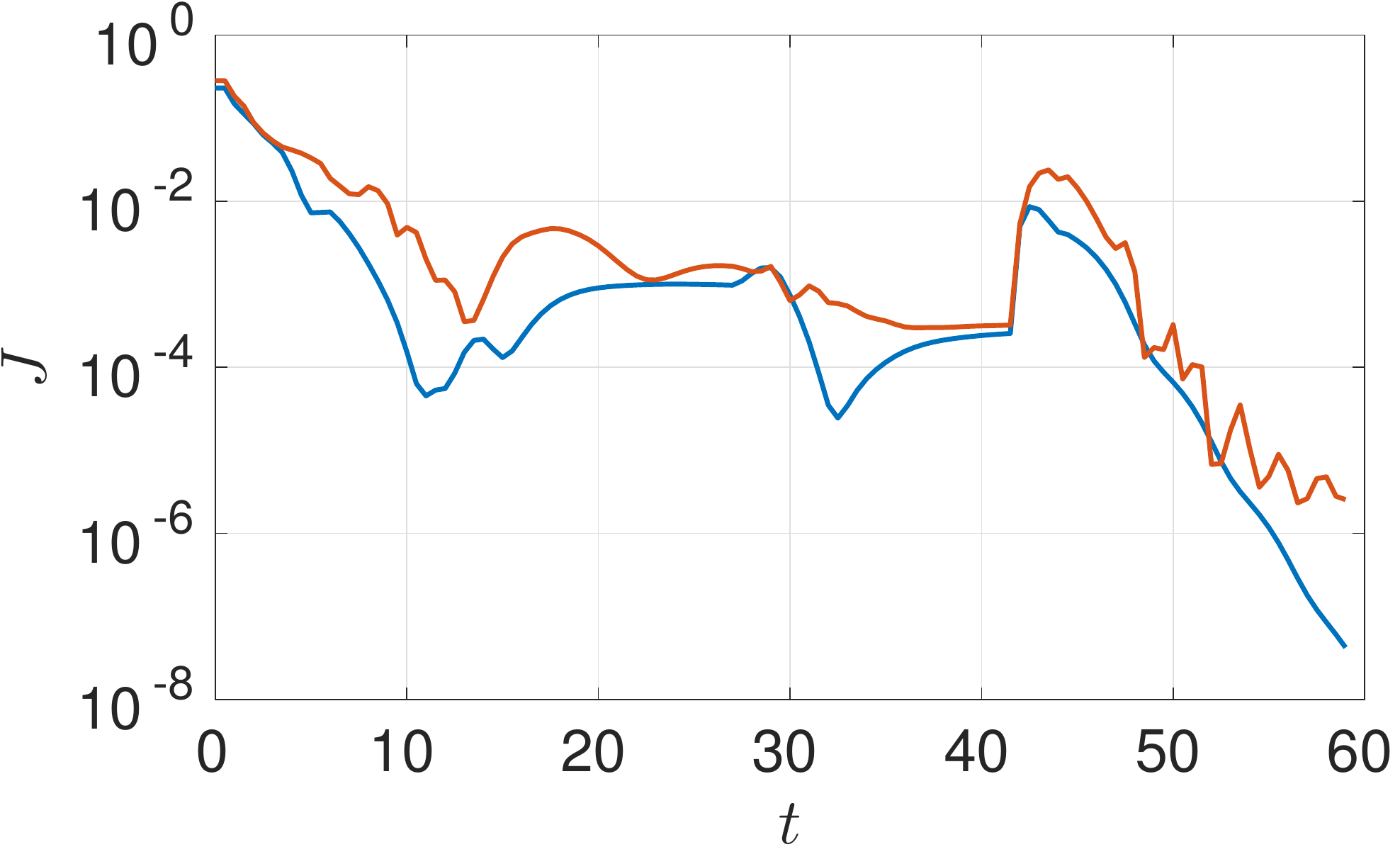}}\\[1ex]
	\parbox[b]{0.45\textwidth}{\centering (e) \\ \includegraphics[width=.43\textwidth]{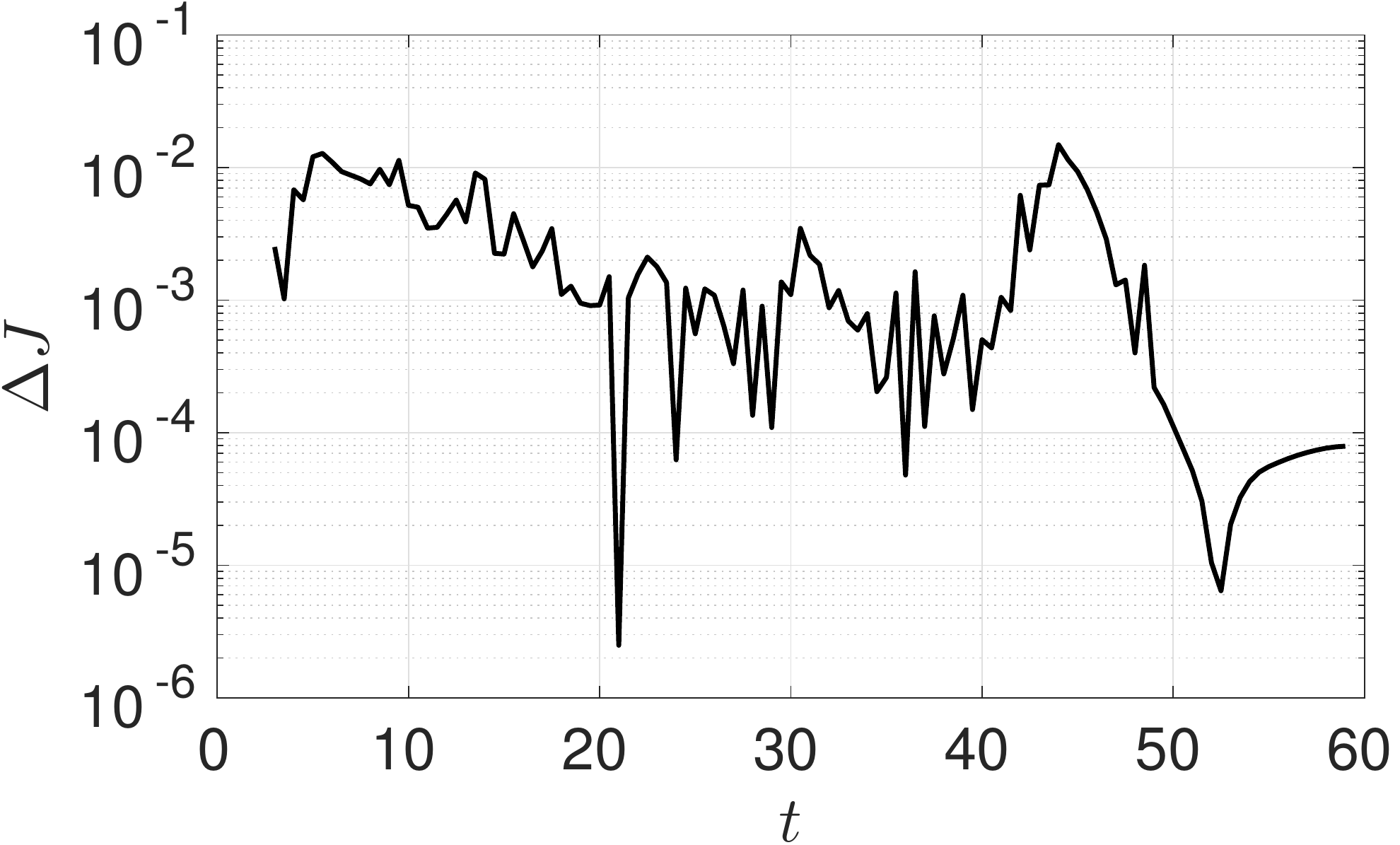}} \hfil
	\parbox[b]{0.45\textwidth}{\centering (f) \\ \includegraphics[width=.43\textwidth]{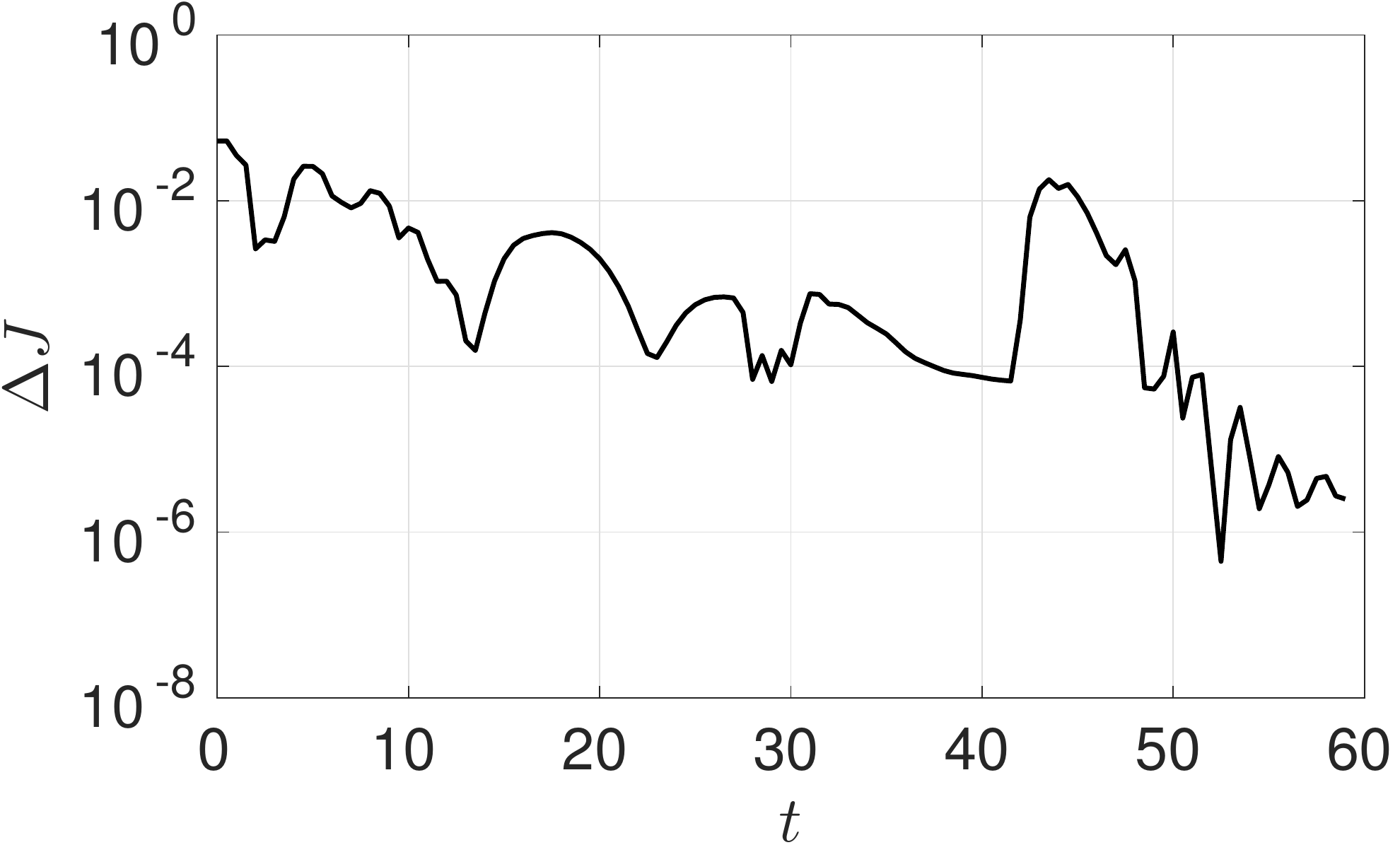}} \\[1ex]
	\parbox[b]{0.45\textwidth}{\centering (g) \\ \includegraphics[width=.43\textwidth]{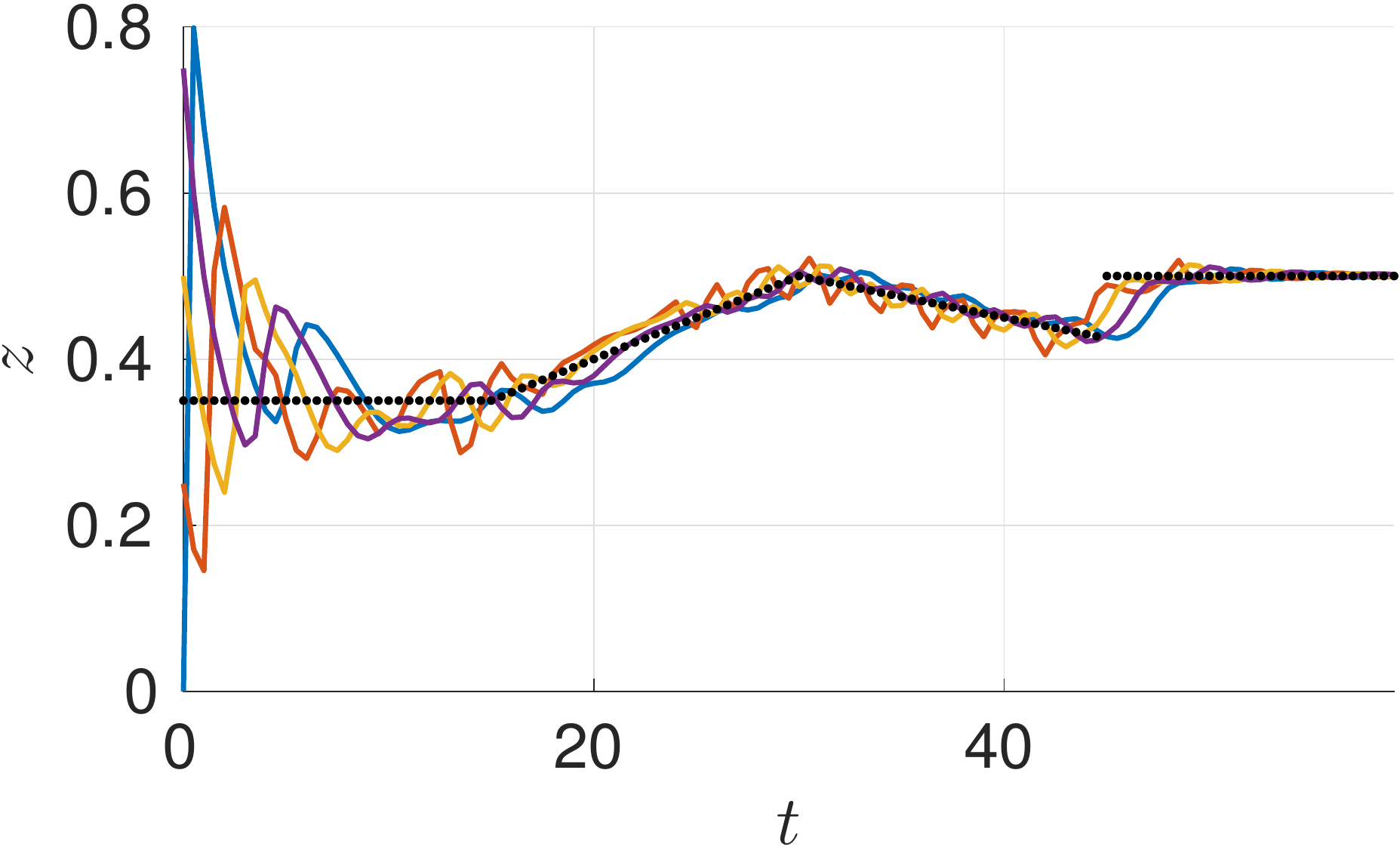}} \hfil
	\parbox[b]{0.45\textwidth}{\centering (h) \\ \includegraphics[width=.43\textwidth]{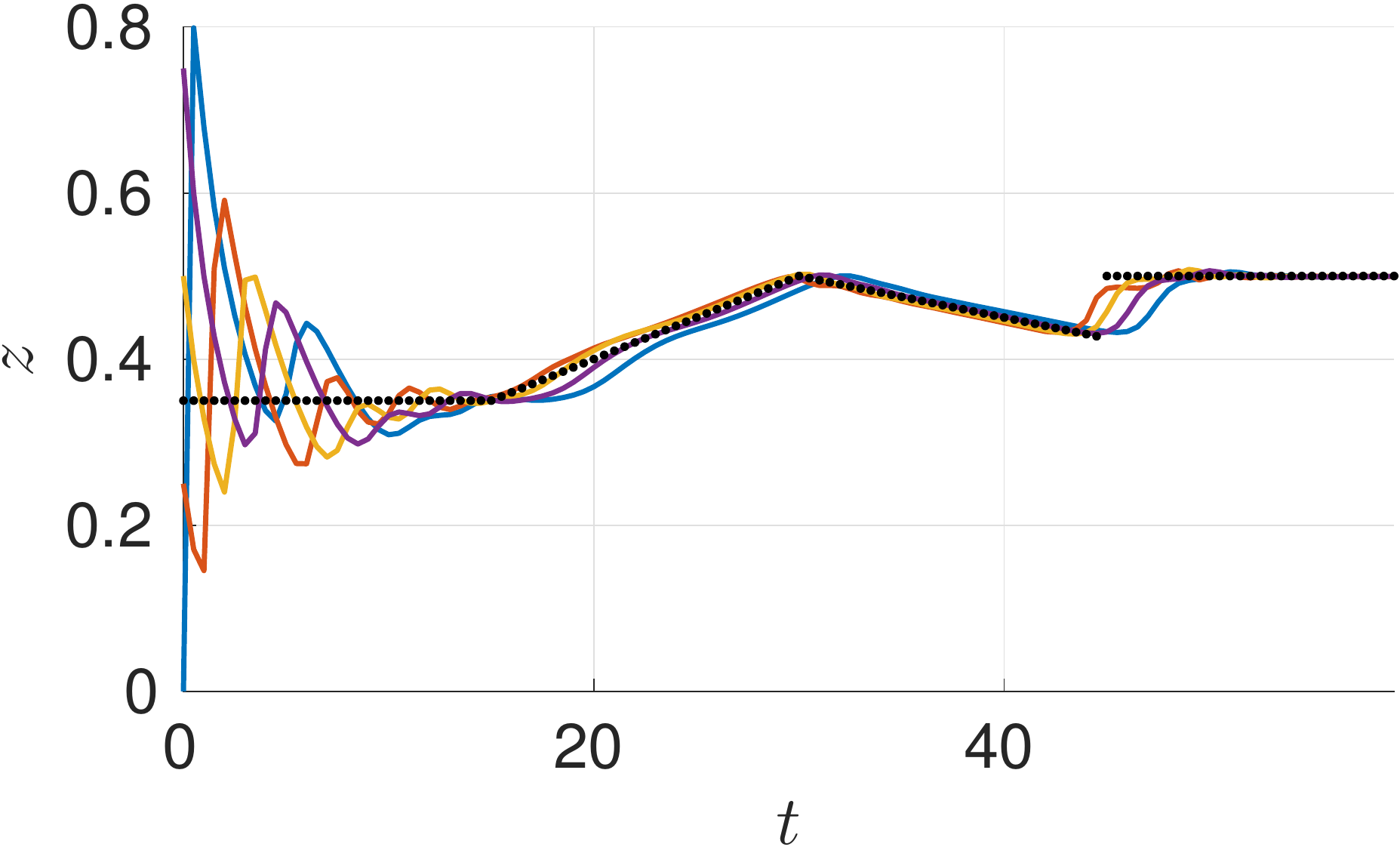}}
	\caption{Left column: \eqref{eq:MPC_STO} vs.~\eqref{eq:MPC_STO_Koopman}. Right column: \eqref{eq:MPC} vs.~\eqref{eq:MPC_Bilinear_Koopman}. (a,b) Optimal control for the full problem (blue) and for the K-ROM based problem (orange). (c,d) The corresponding objective function values. (e,f) Absolute value of the difference between the full and the reduced solution. (g,h) Optimal trajectories of the observation $\vec{z}$. For a comparison to the full system see Figure~\ref{fig:Burgers} (f) and (e).}
	\label{fig:Burgers_STO_Full_vs_KROM}
\end{figure}

We see that in the switched systems approach, the inputs (Figure~\ref{fig:Burgers_STO_Full_vs_KROM}~(a)) vary significantly. 
This is due to the MPC framework. As soon as the two $\vec{z}$ trajectories differ slightly, the corresponding optimization problems do not necessarily possess the same optimal solution any longer. Consequently, small inaccuracies may result in different control trajectories. 
However, we see in Figure~\ref{fig:Burgers_STO_Full_vs_KROM}~(c) and (e) that the value of the objective function is of comparable quality. In some parts (e.g., at $t \approx 10\,s$), the ripples around the reference trajectory (Figure~\ref{fig:Burgers_STO_Full_vs_KROM}~(g)) are slightly larger than in the PDE-constrained case (cf.~Figure~\ref{fig:Burgers} (f) on p.~\pageref{fig:Burgers}). Nevertheless, it can be concluded that the switched systems K-ROM approach is very well suited for real-time control of the Burgers equation.

Looking at the continuous K-ROM, we see that we have an even better performance (cf.~Figure~\ref{fig:Burgers}~(d)), as can be expected due to the larger freedom in choosing the input to the system. We see that the difference between the PDE based and the K-ROM based solutions is similar to the switched systems case. However, a significant advantage is that we now require only data for two autonomous systems instead of three. This means that the data requirements can be further reduced by $33 \%$. For the same reasons as in the switched systems case, we do not have a very good agreement between the optimal control. We will further study this effect in Section~\ref{subsec:data_sampling}.

\subsubsection{The 2D Navier--Stokes equations}

The second example is the flow around a cylinder described by the 2D incompressible Navier--Stokes equations at a Reynolds number of $Re = 100$:
\begin{align*}
	\dot{\vec{y}}(\vec{x},t) + \vec{y}(\vec{x},t) \cdot \nabla \vec{y}(\vec{x},t) &= \nabla p(\vec{x},t) + \frac{1}{Re} \Delta \vec{y}(\vec{x},t), \\
	\nabla \cdot \vec{y}(\vec{x},t) &= 0, \\
	\vec{y}(\vec{x},0) &= \vec{y}^0(\vec{x}),
\end{align*}
see Figure~\ref{fig:vonKarman}~(a) for the problem setup and a snapshot of the solution computed with \emph{OpenFOAM} \cite{JJT07} using a finite volume discretization with $22,000$ cells. 
The system is controlled via rotation of the cylinder, i.e., $u(t)$ is the angular velocity. Without control, the well-known \emph{von K\'{a}rm\'{a}n vortex street} occurs.
\begin{figure}[b!]
	\centering
	\parbox[b]{0.49\textwidth}{\centering (a) \\ \includegraphics[width=.49\textwidth]{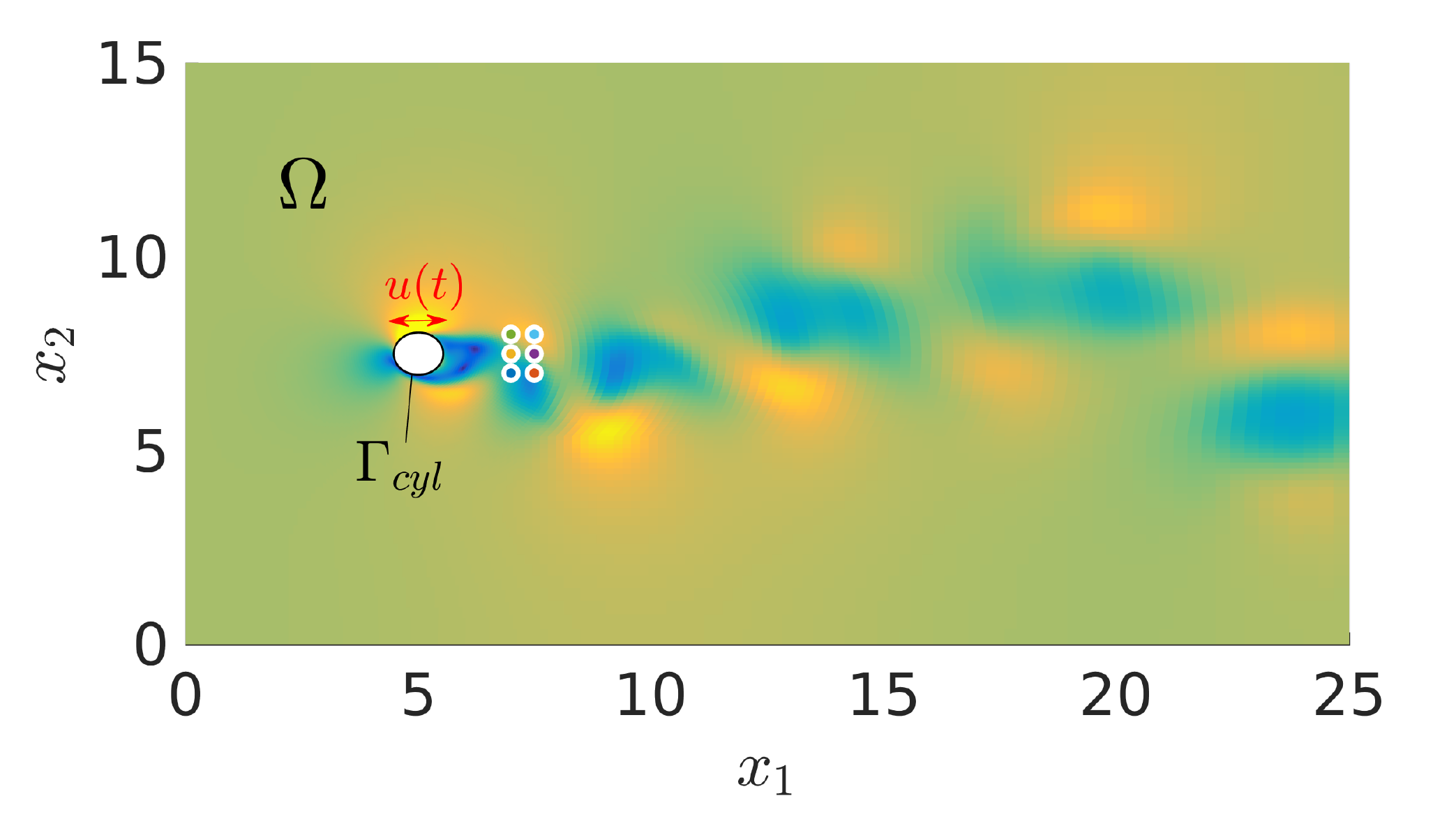}} \hfil
	\parbox[b]{0.49\textwidth}{\centering (b) \\ \includegraphics[width=.47\textwidth]{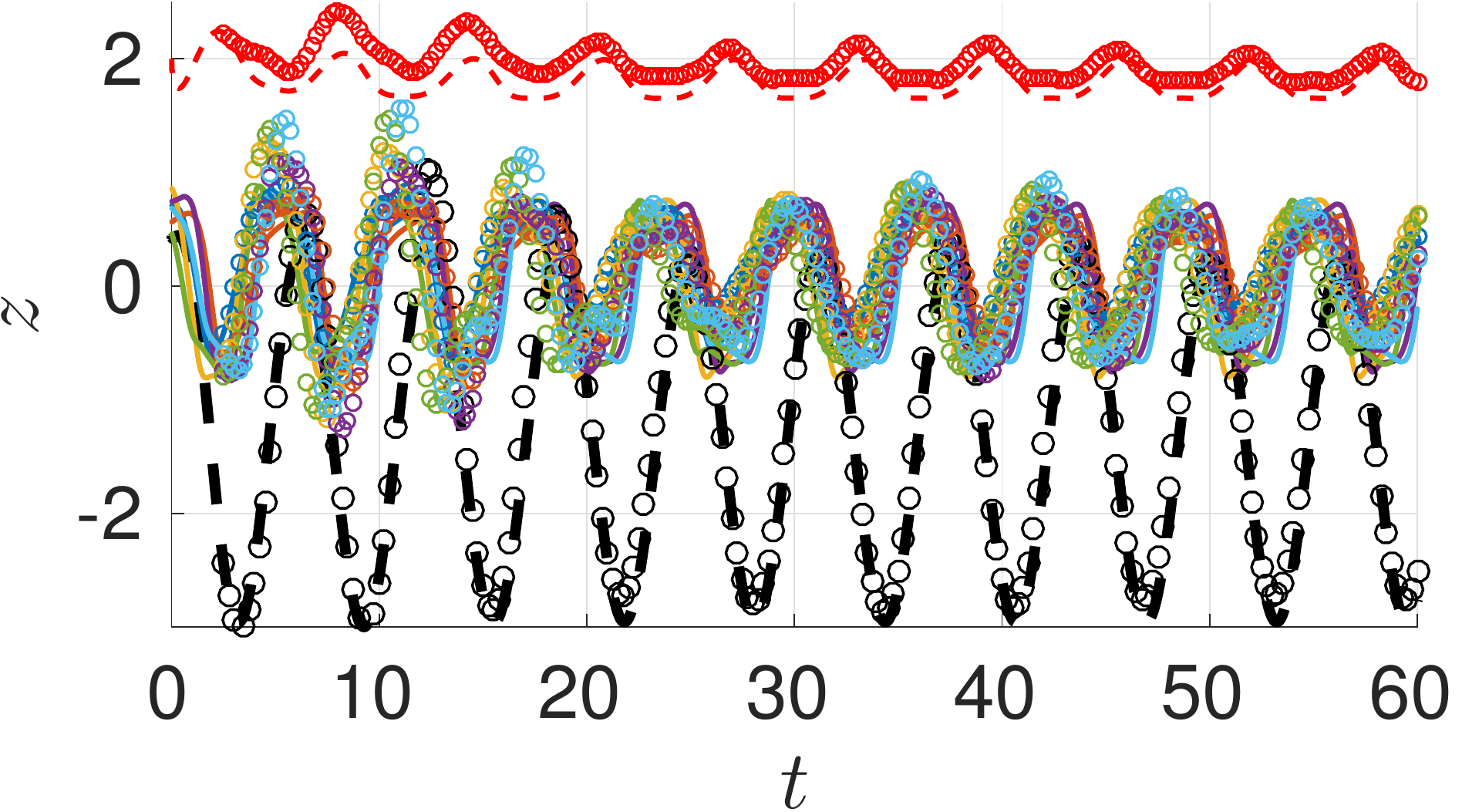}}\\[1ex]
	\parbox[b]{0.49\textwidth}{\centering (c) \\ \includegraphics[width=.49\textwidth]{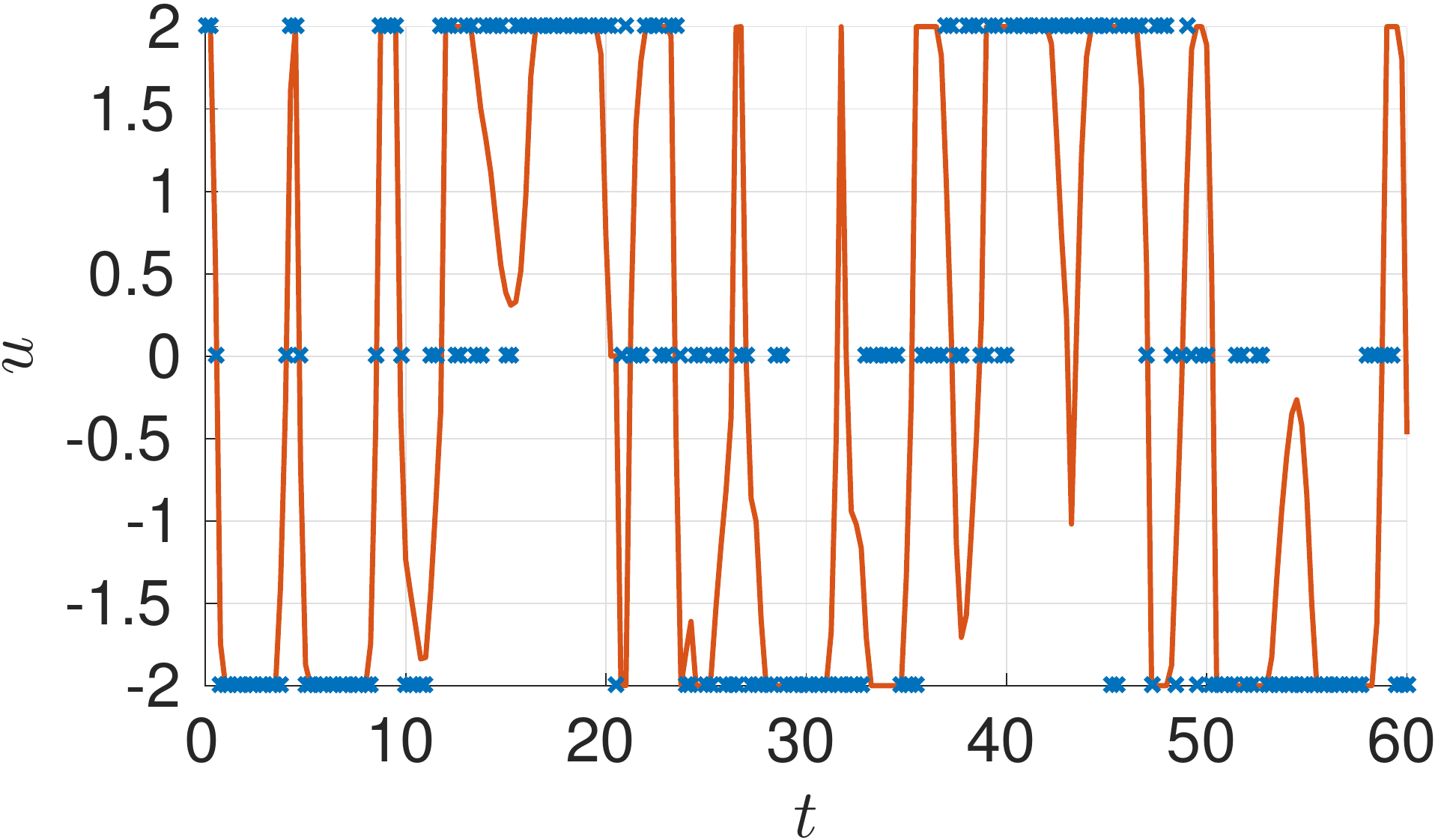}}\hfil
	\parbox[b]{0.49\textwidth}{\centering (d) \\ \includegraphics[width=.49\textwidth]{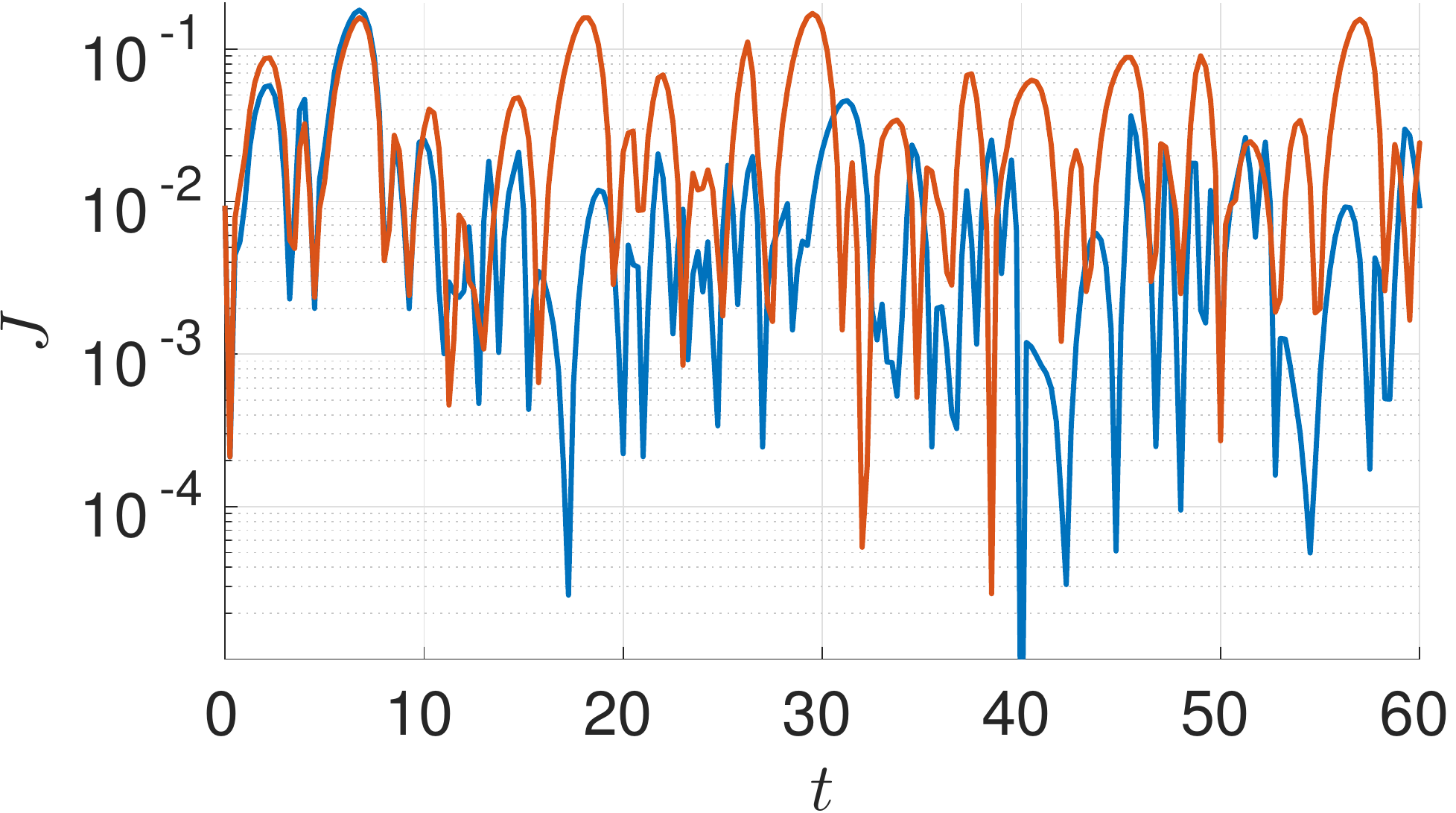}}\\[1ex]
	\parbox[b]{0.49\textwidth}{\centering (e) \\ \includegraphics[width=.49\textwidth]{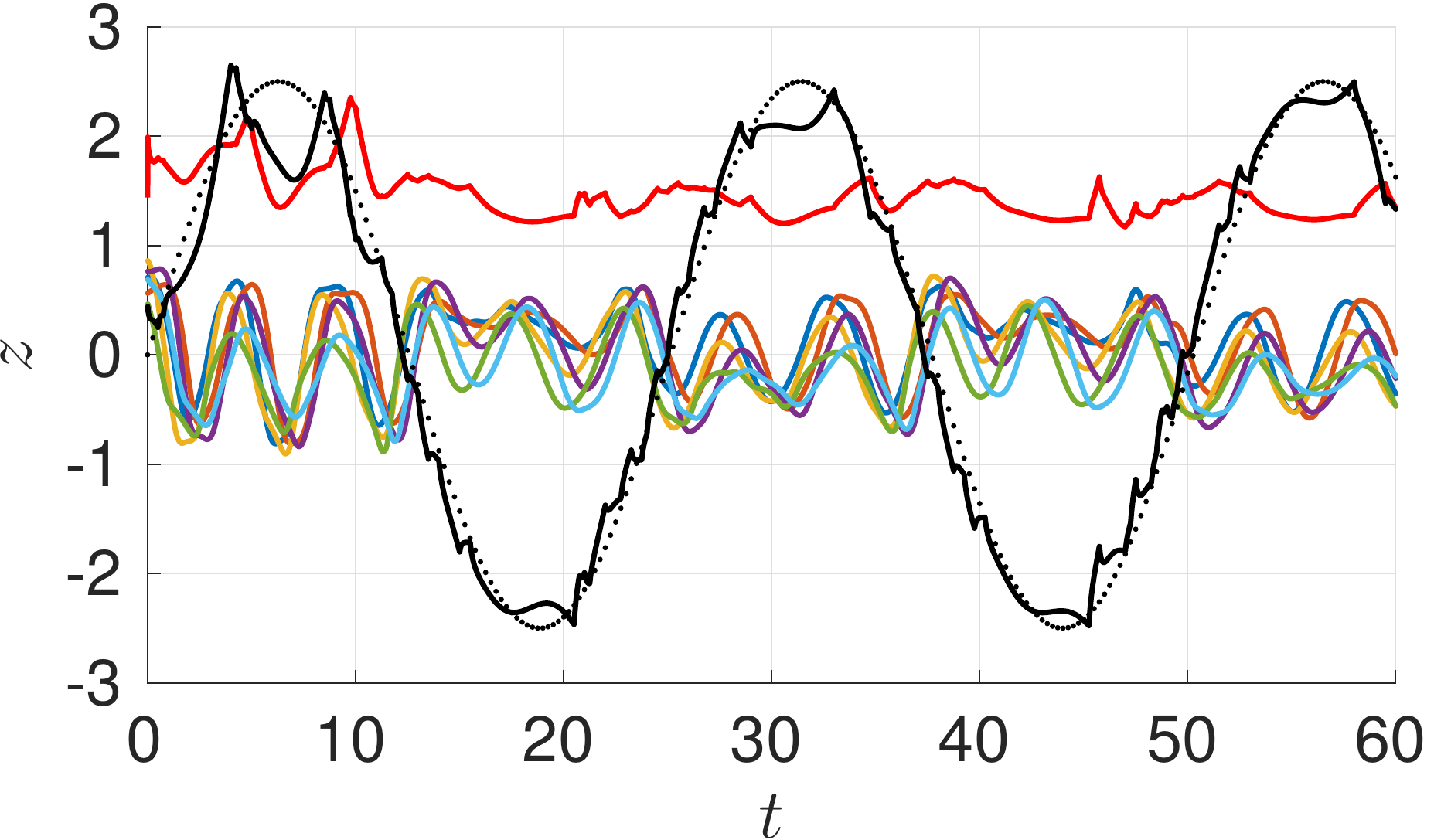}}\hfil
	\parbox[b]{0.49\textwidth}{\centering (f) \\ \includegraphics[width=.49\textwidth]{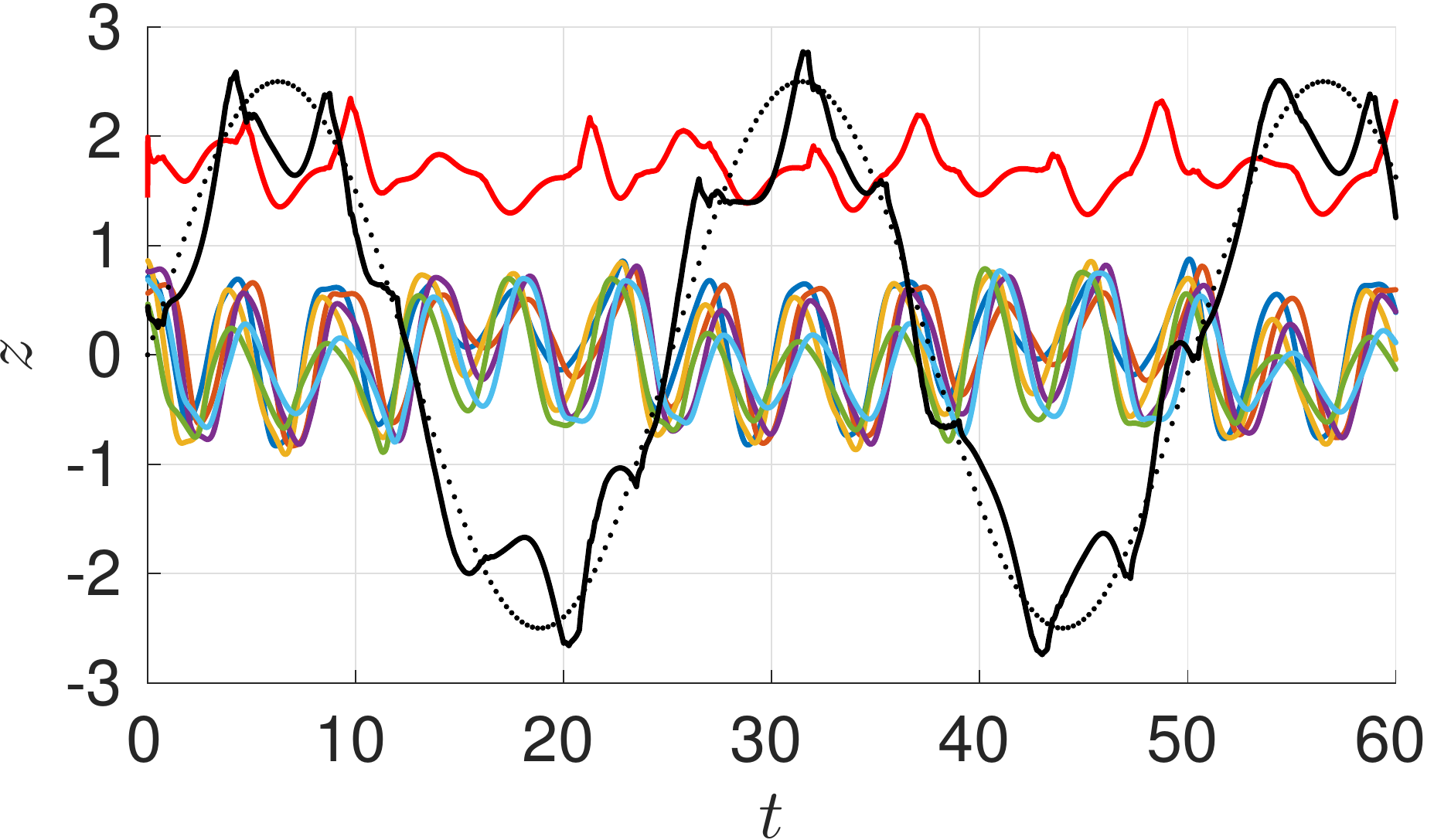}}
	\caption{(a) Snapshot of the PDE simulation. Colored dots: sensor positions $(\vec{x}_1,\ldots,\vec{x}_6)$. (b) Comparison between the observation of the PDE solution (dots) and the bilinear K-ROM \eqref{eq:KROM_continuous} ($u^0 = 0$, $u^1 = 2$) for $u(t) = 1 + sin(t)$. (c) Control trajectories obtained by \eqref{eq:MPC_STO_Koopman} (blue) and \eqref{eq:MPC_Bilinear_Koopman} (orange). (d) Objective function values corresponding to (c). (e) Trajectories of the observations $\vec{z}=f(\vec{y})$ corresponding to \eqref{eq:MPC_STO_Koopman}. Balck dots: reference trajectory for the lift coefficient (black line). Red line: drag coefficient. Horizontal velocities are colored according to the sensor positions in (a). (f) As in (e) but for \eqref{eq:MPC_Bilinear_Koopman}.}
	\label{fig:vonKarman}
\end{figure}

Similar to the Burgers example, we do not observe the full state. Since we want to control the vertical force on the cylinder (i.e., the lift), we directly observe the lift coefficient $C_l$. In addition, we observe the drag coefficient $C_d$ and the vertical velocity at six points $(\vec{x}_1,\ldots,\vec{x}_6)$ in the cylinder wake (see Figure~\ref{fig:vonKarman}~(a)), which yields the following observation:
\begin{equation*}
	\vec{z}(t) = \left(C_l(t), C_d(t), \vec{y}_2(\vec{x}_1,t), \ldots, \vec{y}_2(\vec{x}_6,t)\right)^{\top}.
\end{equation*}
As already mentioned, we want to influence the lift by rotating the cylinder. Since the lift coefficient is one of the observables, we simply have to track the corresponding entry of $\vec{z}$ in the MPC problem:
\begin{align*}
	\min_{\vec{\tau} \in \{u^0,u^1,u^2\}^p} \sum_{i=s}^{s+p-1} \left(\vec{z}_{i,1} - \vec{z}_{i}^{\mathsf{opt}}\right)^2.
\end{align*}

Here, we introduce three autonomous systems with the constant cylinder rotations $u_0 = -2$, $u_1 = 0$, $u_2 = 2$ for both K-ROM approaches.
The data is collected from one long-time simulation over $3000$ seconds with a random switching. As the lag time, we choose $h = 0.25$. 
The fact that we have three autonomous systems means that we use the localized ROM concept \eqref{eq:KROM_local} for the bilinear model. The MPC solutions to both reduced problem formulations (both with a prediction horizon of length $p=5$) are compared in Figure~\ref{fig:vonKarman}. We see in (b) a comparison between a PDE simulation and the bilinear K-ROM, and very good agreement is observed despite the significant dimension reduction. In Figure~\ref{fig:vonKarman} (c) and (d) the two K-ROM based solutions are compared and we observe almost equal quality of the solution. Interestingly, the switching approach is superior to the bilinear K-ROM. The reason for this is likely that due to the localized K-ROM approach, the objective function possesses many non-smooth kinks for $p=5$. Consequently, the true optimum is difficult to compute numerically without using algorithms specifically tailored to continuous, piece-wise smooth problems. Furthermore, inaccuracies introduced via linear interpolation become more significant for longer prediction horizons, i.e., for larger $p$.

\subsection{Data sampling and basis selection}
\label{subsec:data_sampling}

We have seen in the previous section that both approaches are capable of controlling complex PDE-constrained problems in real time. 
As already mentioned, the convergence result for EDMD only holds for infinitely large dictionaries $\Psi$ and infinitely many data points. Consequently, we now study the influence of different numerical parameters on the solution quality since these assumptions are not met in a practical setting.

\begin{figure}[b!]
	\centering
	\parbox[b]{0.49\textwidth}{\centering (a) \\ \includegraphics[width=.45\textwidth]{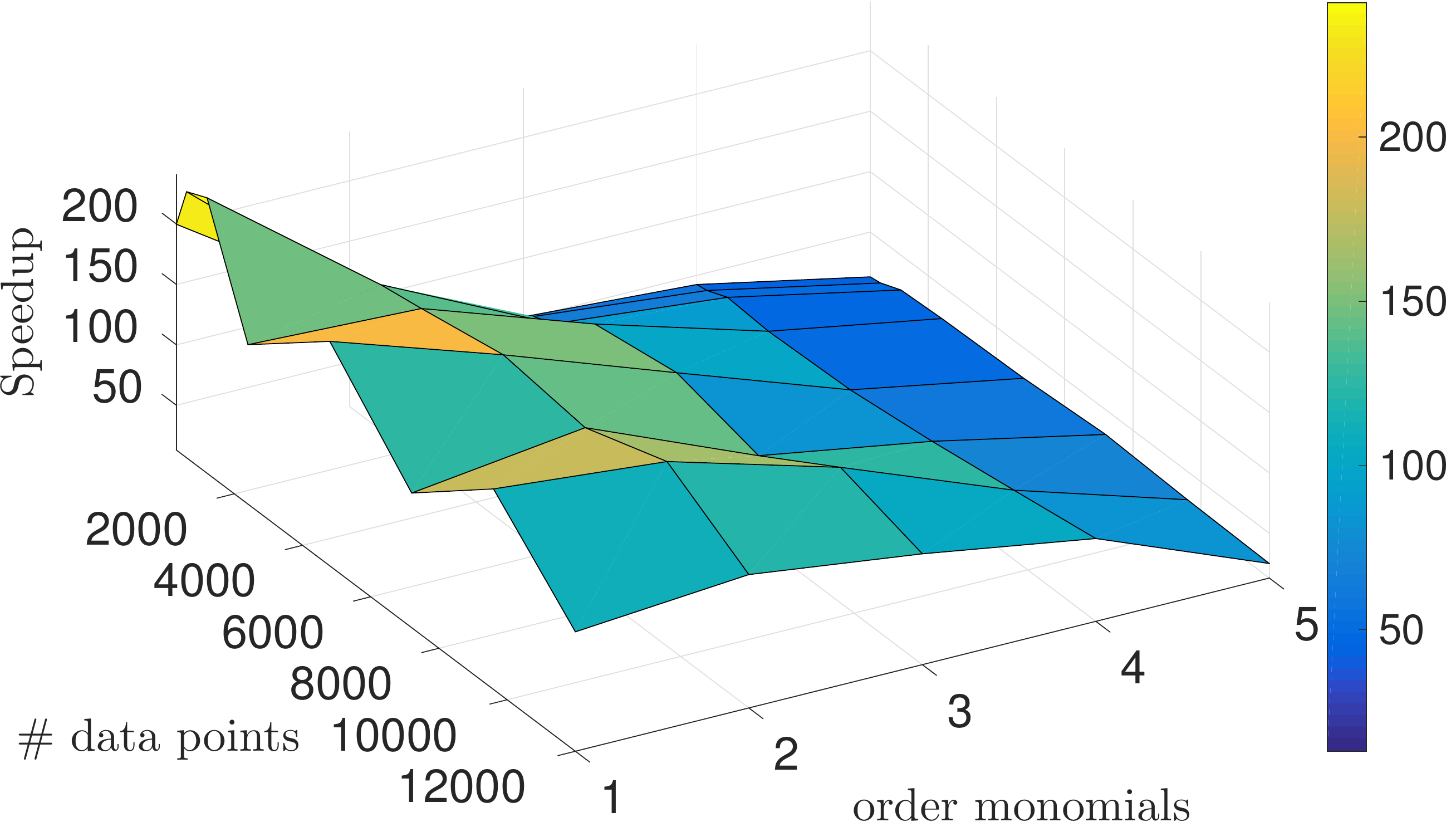}} \hfil
	\parbox[b]{0.49\textwidth}{\centering (b) \\ \includegraphics[width=.45\textwidth]{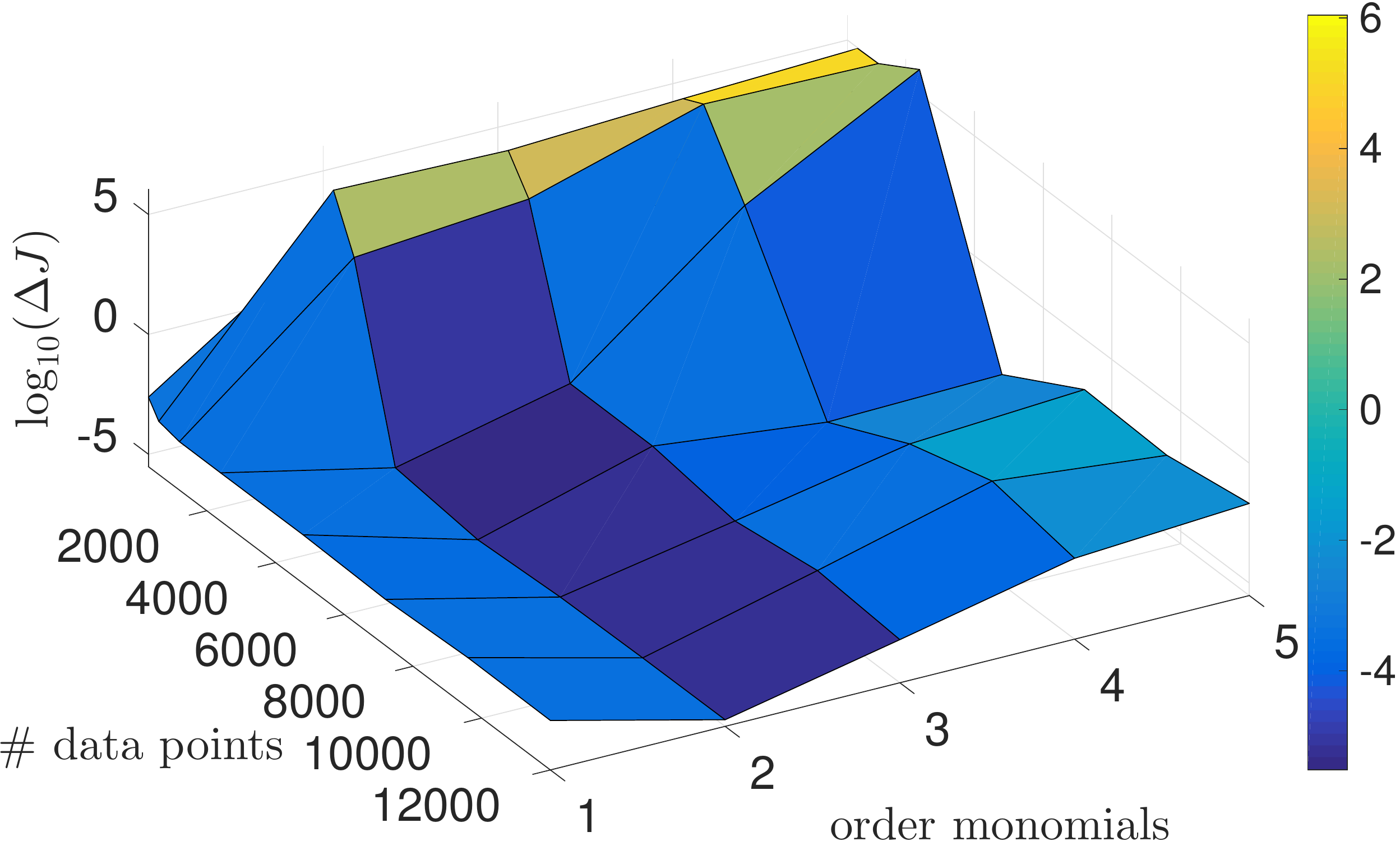}} \\[1ex]
	\parbox[b]{0.49\textwidth}{\centering (c) \\ \includegraphics[width=.45\textwidth]{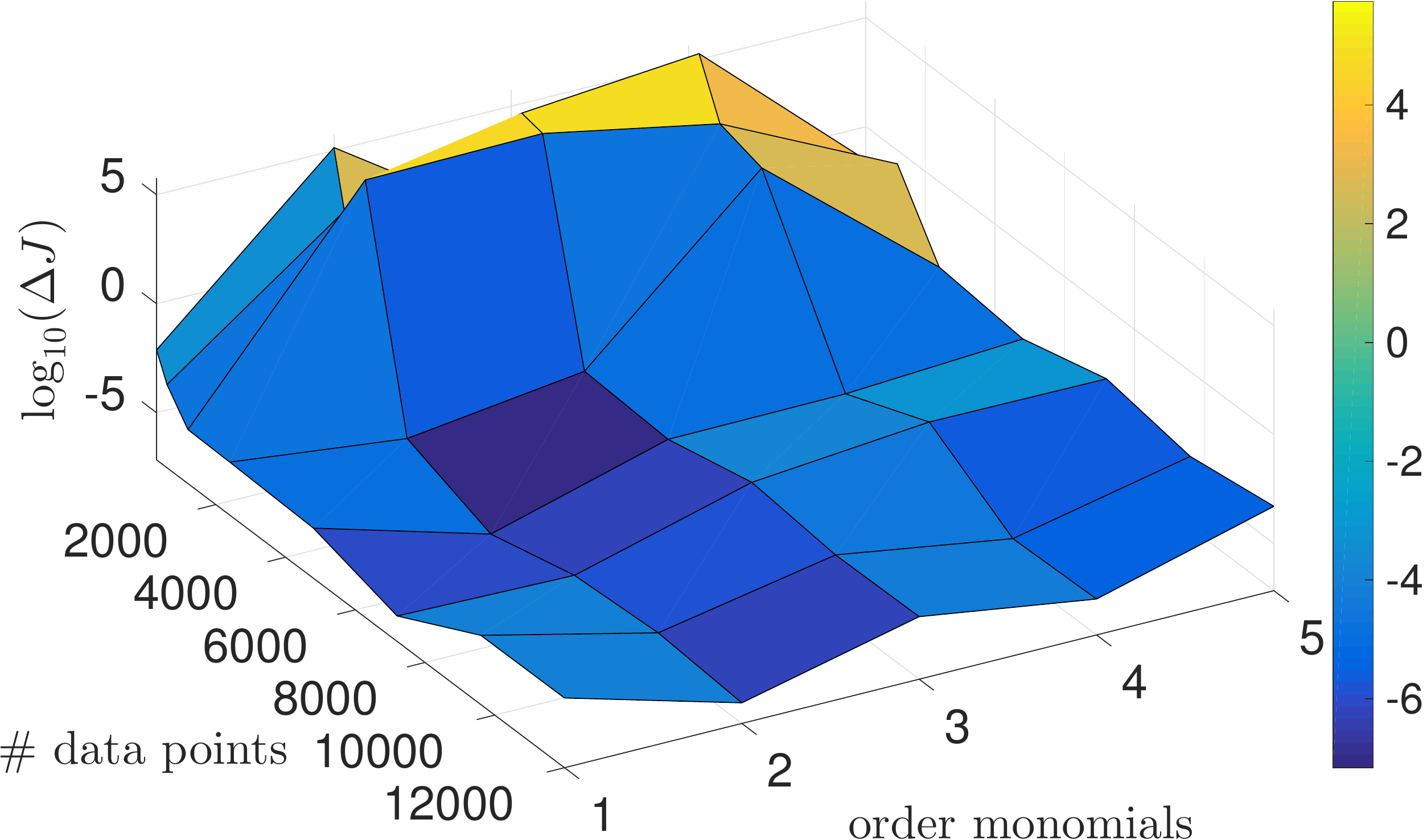}} \hfil
	\parbox[b]{0.49\textwidth}{\centering (d) \\ \includegraphics[width=.45\textwidth]{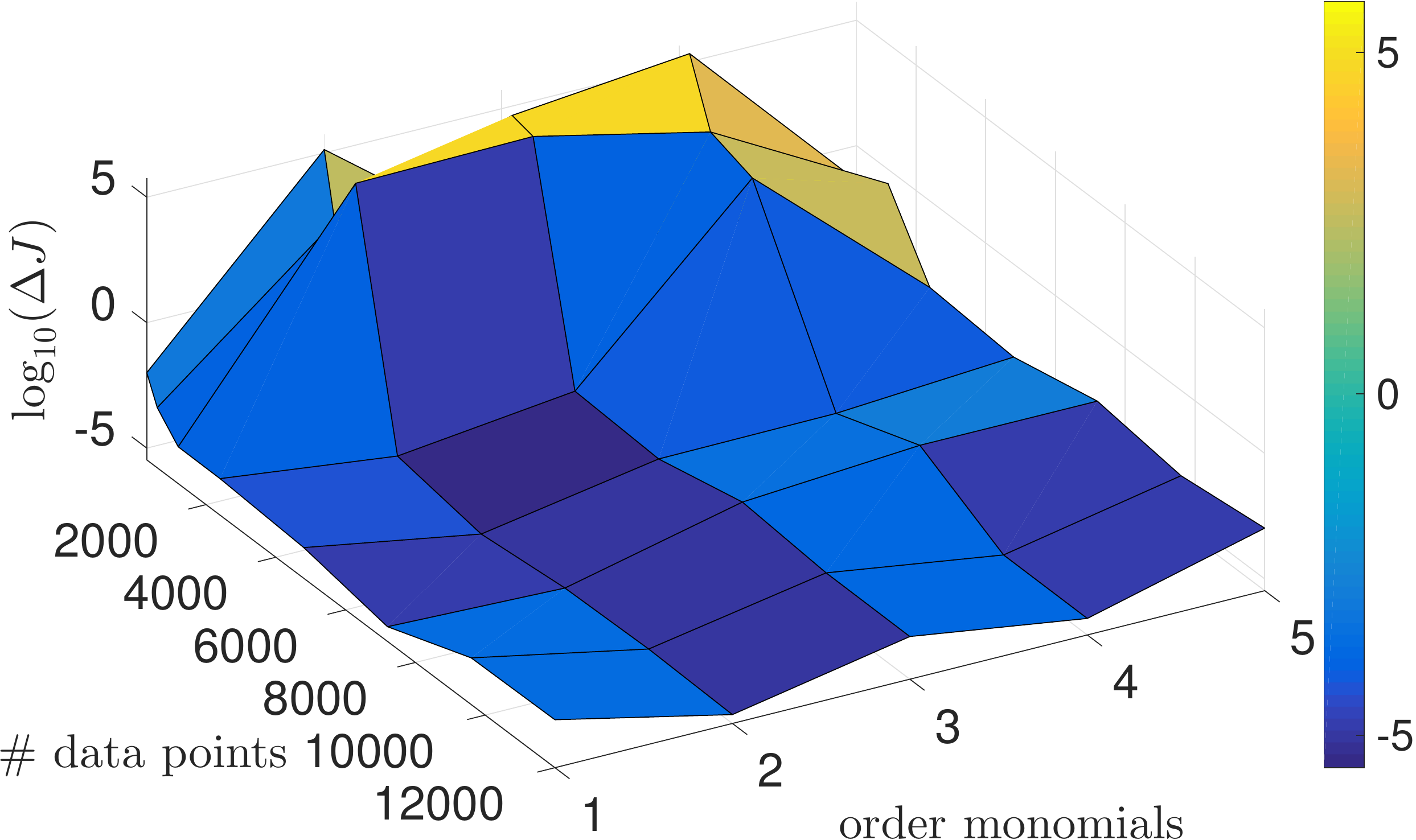}}
	\caption{Analysis of the performance of the two K-ROM approaches depending on the order of monomials as well as the number of data points used for the training for the 1D Burgers equation. (a) Speedup in comparison to the PDE solver. (b) to (d) Integrated error $\Delta J$ between the K-ROM and the full solution. (b) \eqref{eq:MPC_Bilinear_Koopman} vs.~\eqref{eq:MPC}. (c) \eqref{eq:MPC_STO_Koopman} vs.~\eqref{eq:MPC_STO}. (d) \eqref{eq:MPC_STO_Koopman} vs.~\eqref{eq:MPC}.}
	\label{fig:Analysis_Data_orderMon_Burgers}
\end{figure}

\begin{figure}[b!]
	\centering
	\parbox[b]{0.49\textwidth}{\centering (a) \\ \includegraphics[width=.45\textwidth]{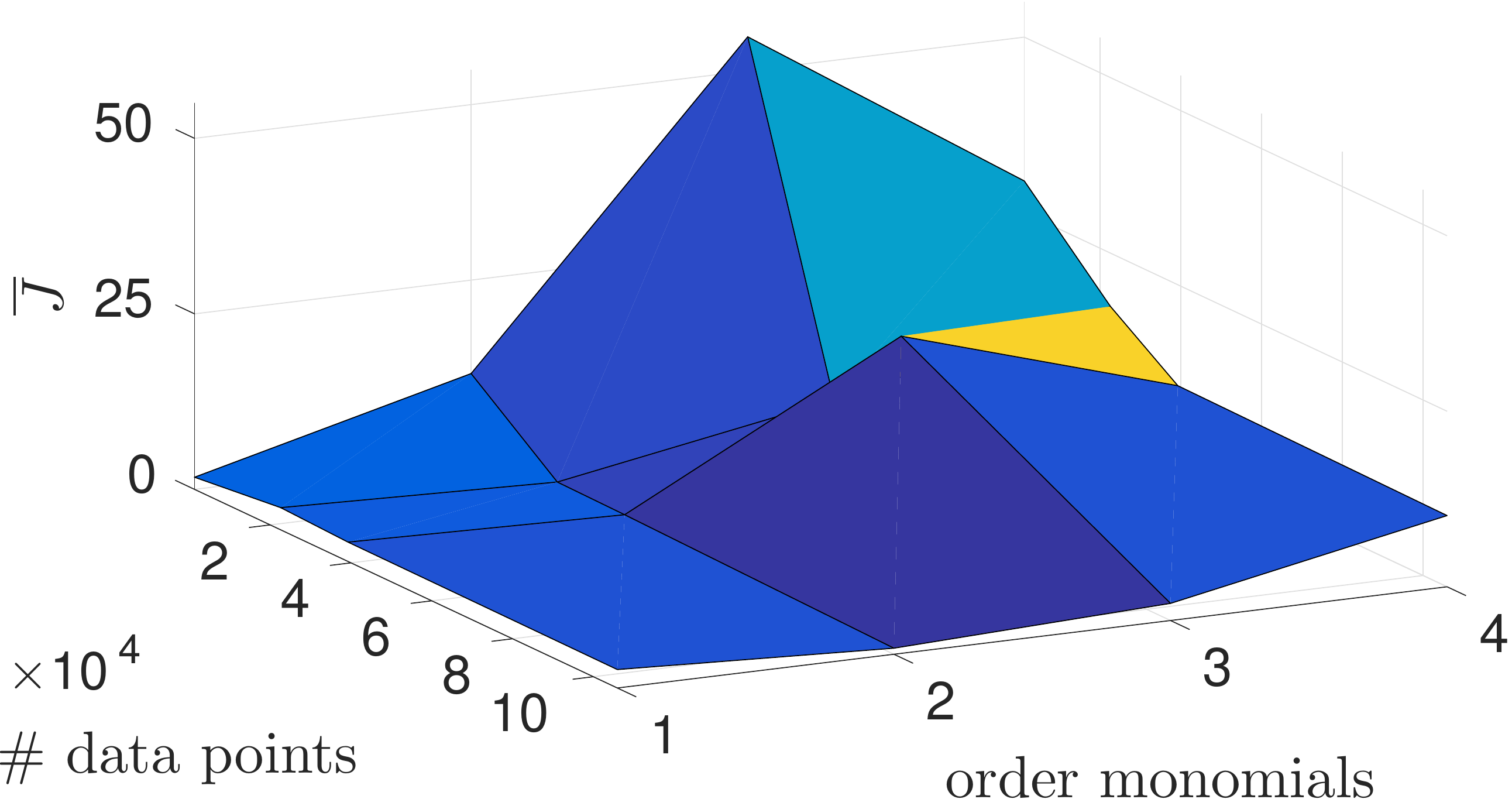}} \hfil
	\parbox[b]{0.49\textwidth}{\centering (b) \\ \includegraphics[width=.45\textwidth]{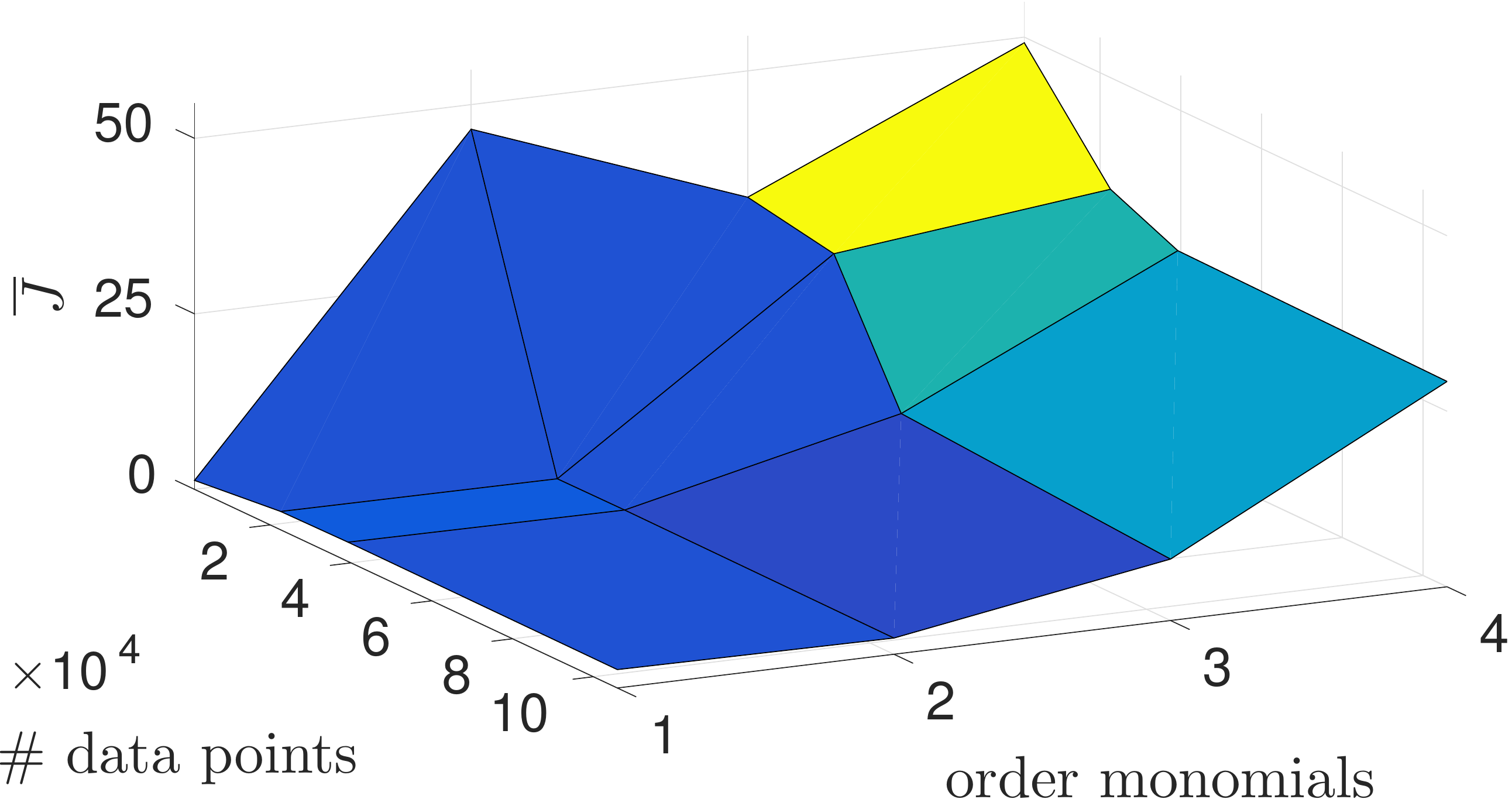}}
	\caption{Analysis of the performance of the two K-ROM approaches depending on the order of monomials as well as the number of data points used for the training for the 2D Navier--Stokes equations. (a) Integrated objective function value for \eqref{eq:MPC_STO_Koopman}. (b) Integrated objective function value for \eqref{eq:MPC_Bilinear_Koopman}.}
	\label{fig:Analysis_Data_orderMon_vonKarman}
\end{figure}

In Figure~\ref{fig:Analysis_Data_orderMon_Burgers}, the influence of the maximal order of the monomial basis and the size of the training data set is visualized for the Burgers example. In accordance with Section~\ref{subsubsec:Burgers}, we have taken three K-ROMs for the switched systems approach and two for the bilinear model. The order of the polynomials directly influences the dimension of the K-ROM such that the speedup crucially depends on this choice, see Figure~\ref{fig:Analysis_Data_orderMon_Burgers}~(a), where we have speedup factors of approximately 200 for a maximum order of 1 (i.e., standard DMD). This factor then reduces to roughly 50 for polynomials up to order 5. However, we see that for both K-ROM approaches, it is sufficient to consider monomials of order 2. Another benefit of these lower-dimensional K-ROMs is that the amount of required data is smaller. The larger the surrogate model is, the more data we need to compute satisfactory approximations of the Koopman operator. In fact, it appears that the standard DMD approach is the most robust concerning the amount of training data. When comparing Figure~\ref{fig:Analysis_Data_orderMon_Burgers}~(b) and (d), where the distance between the K-ROM approaches and the continuous PDE constrained MPC problem are compared, we see that -- surprisingly -- the switched systems approach yields a feedback behavior of similar quality compared to the bilinear K-ROM. Note, however, that the amount of training data is smaller for the bilinear model.  

When studying the Navier--Stokes example, the picture is very similar, cf.~Figure~\ref{fig:Analysis_Data_orderMon_vonKarman}. Since the PDE constrained solution is not available, we here plot the integrated objective function value of the PDE model: $\overline{J} = \int_{t_0}^{t_e} J(t)\, dt$. Similar to the previous example, the DMD based version appears to be the most robust concerning the data requirements. Again, both approaches are very similar in performance, and considering monomials of order larger than two is significantly inferior.

\begin{remark}[Influence of the number of K-ROMs]
	As a final experiment, we study how the number of localized K-ROMs influences the solution. To this end, we revisit the Burgers example with three different discretizations of $u$. We consider the cases with two, three, and five inputs at which data is available.
	We see in Figure~\ref{fig:Analysis_numberKROMs_Burgers} that no real trend can be identified. It is interesting to see, however, that no improvement can be observed when increasing the number of control inputs from three to five. On the contrary, increasing the number of ROMs may even be disadvantageous, in particular if the amount of data is insufficient for a good approximation of the Koopman operator. In conclusion, a small to moderate number of control inputs in combination with the classical DMD approximation appears to be the most efficient and robust choice for the problems studied here.
	\begin{figure}[h!]
		\centering
		\parbox[b]{0.49\textwidth}{\centering (a) \\ \includegraphics[width=.45\textwidth]{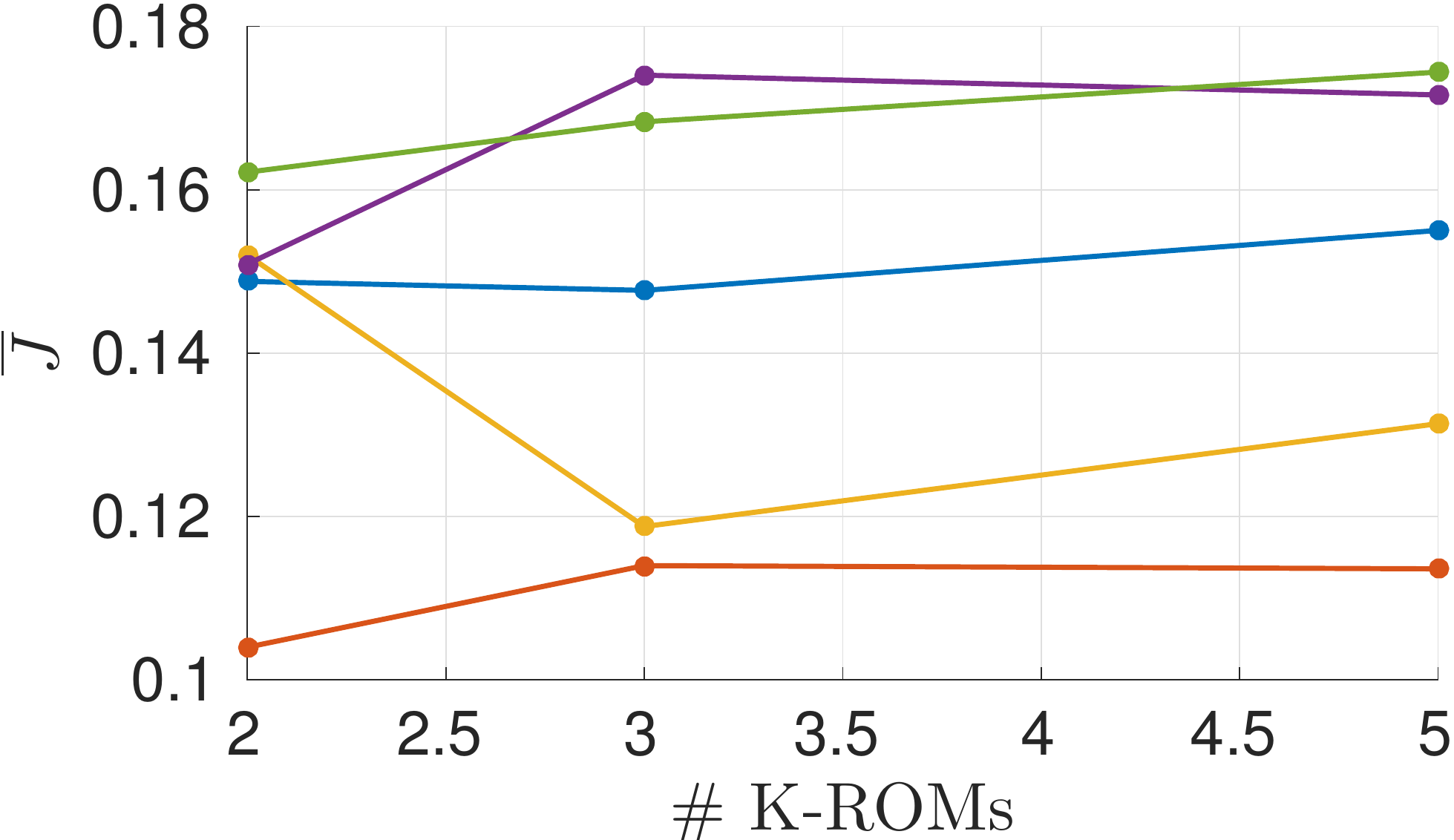}} \hfil
		\parbox[b]{0.49\textwidth}{\centering (b) \\ \includegraphics[width=.45\textwidth]{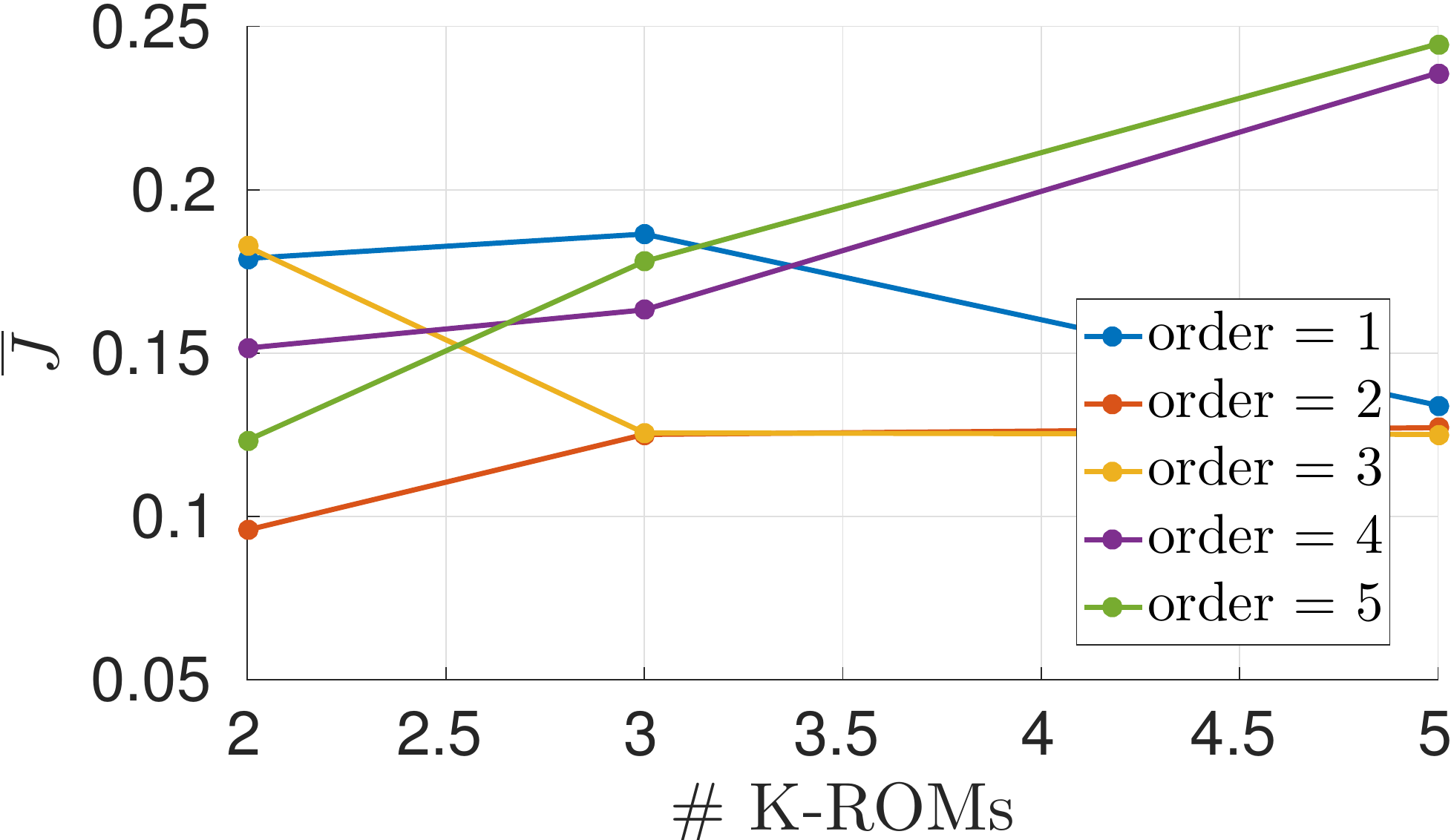}}
		\caption{Integrated objective function value for \eqref{eq:MPC_STO_Koopman} (a) and \eqref{eq:MPC_Bilinear_Koopman} (b), evaluated at a medium data value (i.e., \# data points $\approx$ 4000, cf.~Figure~\ref{fig:Analysis_Data_orderMon_Burgers}~(d)).}
		\label{fig:Analysis_numberKROMs_Burgers}
	\end{figure}
	\exampleSymbol
\end{remark}

\section{Online updates using sensor data}
\label{sec:OnlineUpdates}

During the MPC algorithm, new sensor data is obtained at every sample time $h$. Furthermore, these data points are collected in regions of state space which are of particular interest since they are close to the desired state. Therefore, it is a natural idea to use this data to further improve the quality of the K-ROM. To this end, we make use of the idea developed in \cite{HWR14}, where incremental updates of the DMD approximation are performed in a way that we do not have to store data points that have already been taken into account. We make use of the transformation
\begin{align*}
	\vec{U}^\top = \vec{A} \vec{G}^{+},
\end{align*}
with 
\begin{align*}
	\vec{A} &= \frac{1}{m} \sum_{i=1}^{m} \Psi(\widetilde{\vec{z}}_i) \Psi(\vec{z}_i)^\top,\\
	\vec{G} &= \frac{1}{m} \sum_{i=1}^{m} \Psi(\vec{z}_i) \Psi(\vec{z}_i)^\top,
\end{align*}
see \cite{WKR15,KKS16} for details. We see that in order to update $\vec{U}^\top$, we merely have to store the (in our case low-dimensional) matrices $\vec{A}$ and $\vec{G}$. Each time we obtain a new snapshot pair $(\vec{z}_{m+1}, \widetilde{\vec{z}}_{m+1})$ we can update the EDMD approximation via
\begin{align*}
	\widehat{\vec{A}} &= \frac{m  \vec{A} + q \left(\Psi(\widetilde{\vec{z}}_{m+1}) \Psi(\vec{z}_{m+1})^\top\right)}{m+q},\\
	\widehat{\vec{G}} &= \frac{m  \vec{G} + q \left(\Psi(\vec{z}_{m+1}) \Psi(\vec{z}_{m+1})^\top\right)}{m+q}, \\
	\widehat{\vec{U}}^\top &= \widehat{\vec{A}} \widehat{\vec{G}}^{+},
\end{align*}
where $q \in \N^{\geq 1}$ is a weight parameter. Note that we have to set $q=1$ in order to obtain the standard EDMD procedure. Alternatively, we can determine $q$ in such a way that the update has a higher impact, e.g., by some prescribed percentage $\epsilon$:
\begin{align*}
	q =\left \lfloor m \frac{\epsilon}{1 - \epsilon} \right \rfloor.
\end{align*}
The most expensive part for this update is to compute the pseudo-inverse of $\widehat{\vec{G}}$. However, since the K-ROM dimension is generally low, it can be computed efficiently.

\begin{figure}[t!]
	\centering
	\parbox[b]{0.8\textwidth}{\centering (a) \\ \includegraphics[width=.8\textwidth]{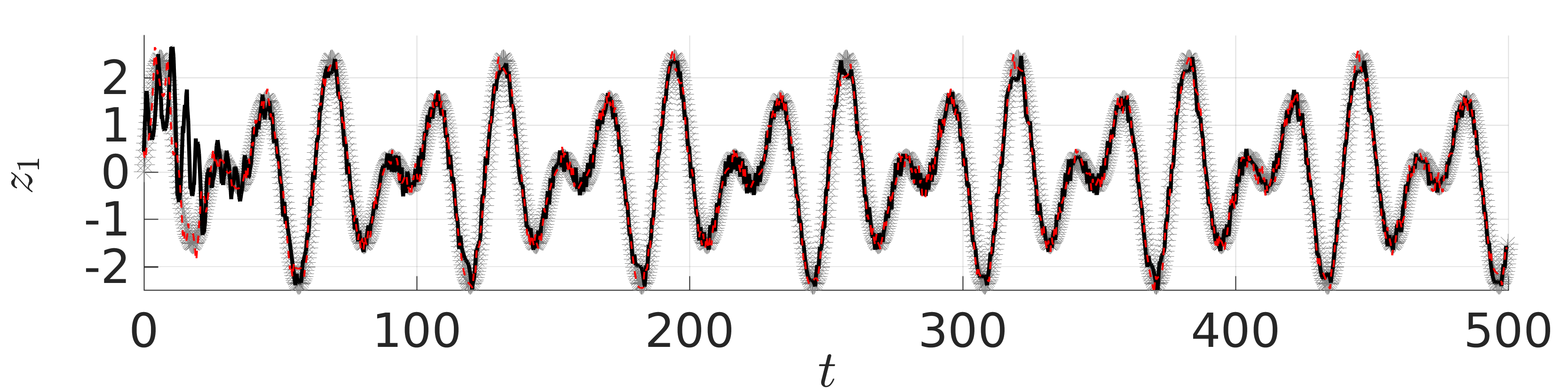}}\\[1ex]
	\parbox[b]{0.8\textwidth}{\centering (b) \\ \includegraphics[width=.8\textwidth]{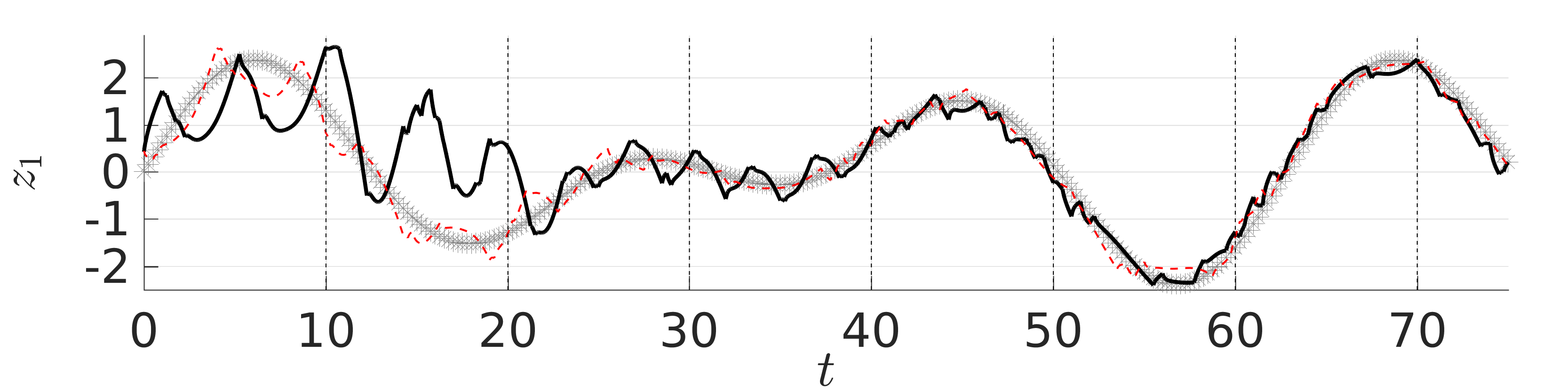}}\\[1ex]
	\parbox[b]{0.8\textwidth}{\centering (c) \\ \includegraphics[width=.8\textwidth]{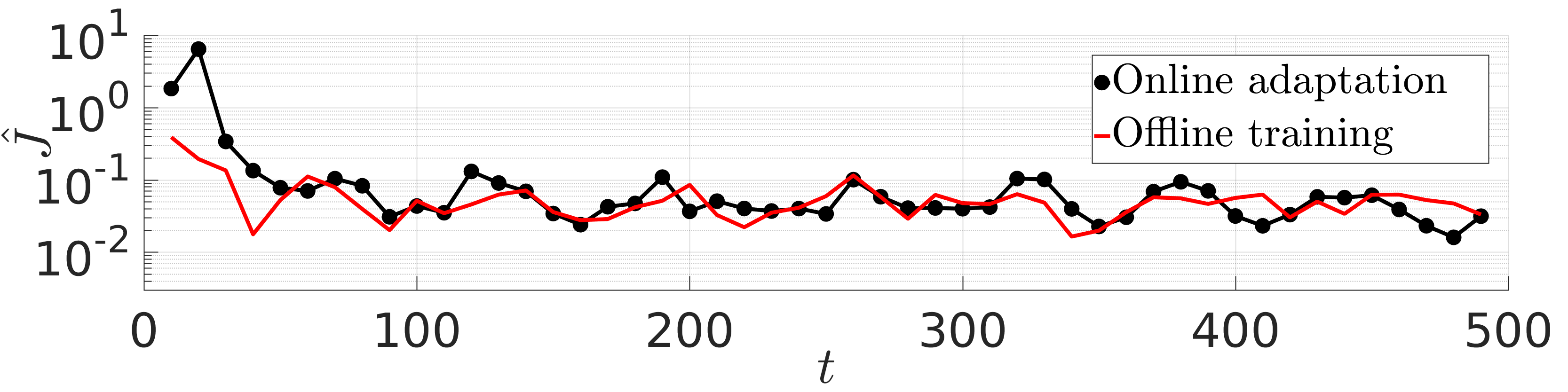}} 
	\caption{Online K-ROM adaptation within \eqref{eq:MPC_STO_Koopman}. (a) Optimal lift trajectory obtained via online adaptation (black) and the standard approach with offline training (red). The reference trajectory is marked by the grey stars. (b) As in (a), zoom into the first 75 seconds. The vertical dashed lines indicate the time instances at which the K-ROMs are updated. (c) Objective function value integrated over the past 10 seconds. The dots mark the points where the K-ROMs are updated.}
	\label{fig:NSE_Learning}
\end{figure}

We now apply this procedure to the Navier--Stokes example. As we have seen in Section~\ref{subsec:data_sampling}, the standard DMD approximation is most stable in the low data limit. To illustrate the data efficiency of our approach, we consequently choose DMD for the K-ROM computation in this section and start with only 50 data points for each of the three autonomous systems $u^0 = -2$, $u^1 = 0$ and $u^2 = 2$.

We set $\epsilon = 0.025$ and 
store the matrices $\vec{A}$ and $\vec{G}$ and sensor data that we measure during the MPC algorithm. We then update the three autonomous systems every 10 seconds. Note that this approach is only viable for the switching approach \eqref{eq:MPC_STO_Koopman} since otherwise, we collect data at intermediate control values for which we do not have a reduced order model. For this approach, the setup proposed in \cite{KM16,AKM18} would be more appropriate.

Figure~\ref{fig:NSE_Learning} shows the results for the lift tracking problem, where we compare the online adaptation with the pure offline training considered before. For the comparison, we consider a longer trajectory of $500$ seconds with 
\[
	z^{\mathsf{opt}}(t) = 2.5  \sin\left(\frac{t}{4}\right) \cos\left(\frac{t}{20}\right).
	\] 
In order to evaluate the online procedure, we compare the objective function value integrated over the past 10 seconds, i.e., over the period with the current K-ROM approximation:
\[
	\hat{J}(t) = \int_{t - 10}^{t} \left(\vec{z}_1(\tau) - z^{\mathsf{opt}}(\tau)\right)^2\, d\tau. 
\]
We see that after the first four to five updates, we already have a very good agreement. Consequently, a K-ROM based controller can be set up with a very small amount of data and then be updated regularly. After a short period of time, the performance is equal to that with an extensive offline phase.

It should be noted that this approach is only applicable if the system cannot stray arbitrarily far from the reference trajectory. Otherwise, it could happen that a strong deterioration occurs from the beginning. Consequently, the resulting training data would not be suited for improving the control performance.

\section{Conclusion}
\label{sec:Conclusion}

We have presented two methods for solving PDE constrained optimal control problems using the Koopman operator for significant speedup. In order to increase the data efficiency, we only collect data for a small number of constant inputs. The corresponding control problem then becomes a switching problem. Alternatively, we obtain a bilinear surrogate model via linear interpolation between two K-ROMs. Based on a recent convergence result for EDMD, convergence of the K-ROM based optimization problems can be shown. Extensive numerical studies show the applicability of both methods to nonlinear PDE constrained problems. Interestingly, a simple DMD approximation of the Koopman operator is beneficial both for the solution quality and robustness with respect to the amount of training data. The reason for the good performance is that predictability only has to be accurate for a small number of time steps. Finally, an extension to online adaptations using sensor data has been presented and validated. This way, we can set up real-time controllers with very limited data.

For future work, it will be interesting to validate the methodology in experiments. First results for ODE constrained problems (electrical drives) are promising~\cite{HPW+18}. Furthermore, the system dynamics of the examples considered here are fairly well-behaved. Consequently, it will be of great interest to study the presented approaches for more complex dynamical systems, e.g., for decreasing viscosity / increasing Reynolds numbers.

\bibliographystyle{unsrt}
\bibliography{Bibliography}

\begin{thebibliography}{10}

\bibitem{GP17}
L.~Gr{\"{u}}ne and J.~Pannek.
\newblock {\em {Nonlinear Model Predictive Control}}.
\newblock Springer International Publishing, 2nd edition, 2017.

\bibitem{LMQR14}
T.~Lassila, A.~Manzoni, A.~Quarteroni, and G.~Rozza.
\newblock {Model order reduction in fluid dynamics: challenges and
  perspectives}.
\newblock In Alfio Quarteroni and Gianluigi Rozza, editors, {\em Reduced Order
  Methods for Modeling and Computational Reduction}, pages 235--273. Springer,
  Cham, 2014.

\bibitem{BGW15}
P.~Benner, S.~Gugercin, and K.~Willcox.
\newblock {A Survey of Projection-Based Model Reduction Methods for Parametric
  Dynamical Systems}.
\newblock {\em SIAM Review}, 57(4):483--531, 2015.

\bibitem{Sir87}
Lawrence Sirovich.
\newblock {Turbulence and the dynamics of coherent structures part I: coherent
  structures}.
\newblock {\em Quarterly of Applied Mathematics}, XLV(3):561--571, 1987.

\bibitem{KV99}
K.~Kunisch and S.~Volkwein.
\newblock {Control of the Burgers Equation by a Reduced-Order Approach Using
  Proper Orthogonal Decomposition}.
\newblock {\em Journal of Optimization Theory and Applications},
  102(2):345--371, 1999.

\bibitem{Row05}
C.~W. Rowley.
\newblock {Model Reduction for Fluids, Using Balanced Proper Orthogonal
  Decomposition}.
\newblock {\em International Journal of Bifurcation and Chaos},
  15(3):997--1013, 2005.

\bibitem{HV05}
M.~Hinze and S.~Volkwein.
\newblock {Proper Orthogonal Decomposition Surrogate Models for Nonlinear
  Dynamical Systems: Error Estimates and Suboptimal Control}.
\newblock In P.~Benner, D.~C. Sorensen, and V.~Mehrmann, editors, {\em
  Reduction of Large-Scale Systems}, volume~45, pages 261--306. Springer Berlin
  Heidelberg, 2005.

\bibitem{TV09}
F.~Tr{\"{o}}ltzsch and S.~Volkwein.
\newblock {POD a-posteriori error estimates for linear-quadratic optimal
  control problems}.
\newblock {\em Computational Optimization and Applications}, 44(1):83--115,
  2009.

\bibitem{Fah00}
M.~Fahl.
\newblock {\em {Trust-region Methods for Flow Control based on Reduced Order
  Modelling}}.
\newblock Phd thesis, University of Trier, 2000.

\bibitem{BC08}
M.~Bergmann and L.~Cordier.
\newblock {Optimal control of the cylinder wake in the laminar regime by
  trust-region methods and POD reduced-order models}.
\newblock {\em Journal of Computational Physics}, 227(16):7813--7840, 2008.

\bibitem{QGVW16}
E.~Qian, M.~Grepl, K.~Veroy, and K.~Willcox.
\newblock {A Certified Trust Region Reduced Basis Approach to PDE-Constrained
  Optimization}.
\newblock {\em ACDL Technical Report TR16-3}, 2016.

\bibitem{Koo31}
B.~O. Koopman.
\newblock {Hamiltonian Systems and Transformations in Hilbert Space}.
\newblock {\em Proceedings of the National Academy of Sciences},
  17(5):315--318, 1931.

\bibitem{MB04}
I.~Mezi{\'{c}} and A.~Banaszuk.
\newblock {Comparison of systems with complex behavior}.
\newblock {\em Physica D: Nonlinear Phenomena}, 197:101--133, 2004.

\bibitem{Mez05}
I.~Mezi{\'{c}}.
\newblock {Spectral Properties of Dynamical Systems, Model Reduction and
  Decompositions}.
\newblock {\em Nonlinear Dynamics}, 41:309--325, 2005.

\bibitem{BMM12}
M.~Budi{\v{s}}i{\'{c}}, R.~Mohr, and I.~Mezi{\'{c}}.
\newblock {Applied Koopmanism}.
\newblock {\em Chaos}, 22, 2012.

\bibitem{Mez13}
I.~Mezi{\'{c}}.
\newblock {Analysis of Fluid Flows via Spectral Properties of the Koopman
  Operator}.
\newblock {\em Annual Review of Fluid Mechanics}, 45:357--378, 2013.

\bibitem{Sch10}
P.~J. Schmid.
\newblock Dynamic mode decomposition of numerical and experimental data.
\newblock {\em Journal of Fluid Mechanics}, 656:5--28, 2010.

\bibitem{RMB+09}
C.~W. Rowley, I.~Mezi{\'{c}}, S.~Bagheri, P.~Schlatter, and D.~S. Henningson.
\newblock {Spectral analysis of nonlinear flows}.
\newblock {\em Journal of Fluid Mechanics}, 641:115--127, 2009.

\bibitem{TRL+14}
J.~H. Tu, C.~W. Rowley, D.~M. Luchtenburg, S.~L. Brunton, and J.~N. Kutz.
\newblock {On Dynamic Mode Decomposition: Theory and Applications}.
\newblock {\em Journal of Computational Dynamics}, 1(2):391--421, 2014.

\bibitem{WKR15}
M.~O. Williams, I.~G. Kevrekidis, and C.~W. Rowley.
\newblock {A Data-Driven Approximation of the Koopman Operator: Extending
  Dynamic Mode Decomposition}.
\newblock {\em Journal of Nonlinear Science}, 25(6):1307--1346, 2015.

\bibitem{KGPS18}
S.~Klus, P.~Gel{\ss}, S.~Peitz, and C.~Sch{\"{u}}tte.
\newblock {Tensor-based dynamic mode decomposition}.
\newblock {\em Nonlinearity}, 31(7):3359--3380, 2018.

\bibitem{KKS16}
S.~Klus, P.~Koltai, and C.~Sch{\"{u}}tte.
\newblock {On the numerical approximation of the Perron-Frobenius and Koopman
  operator}.
\newblock {\em Journal of Computational Dynamics}, 3(1):51--79, 2016.

\bibitem{PBK15}
J.~L. Proctor, S.~L. Brunton, and J.~N. Kutz.
\newblock {Dynamic mode decomposition with control}.
\newblock {\em SIAM Journal on Applied Dynamical Systems}, 15(1):142--161,
  2015.

\bibitem{PBK18}
J.~L. Proctor, S.~L. Brunton, and J.~N. Kutz.
\newblock {Generalizing Koopman Theory to allow for inputs and control}.
\newblock {\em SIAM Journal on Applied Dynamical Systems}, 17(1):909--930,
  2018.

\bibitem{BBPK16}
S.~L. Brunton, B.~W. Brunton, J.~L. Proctor, and J.~N. Kutz.
\newblock {Koopman invariant subspaces and finite linear representations of
  nonlinear dynamical systems for control}.
\newblock {\em PLoS ONE}, 11(2):1--19, 2016.

\bibitem{KM16}
M.~Korda and I.~Mezi{\'{c}}.
\newblock {Linear predictors for nonlinear dynamical systems: Koopman operator
  meets model predictive control}.
\newblock {\em arXiv:1611.03537}, pages 1--16, 2016.

\bibitem{KKB17}
E.~Kaiser, J.~N. Kutz, and S.~L. Brunton.
\newblock {Data-driven discovery of Koopman eigenfunctions for control}.
\newblock {\em arXiv:1707.0114}, 2017.

\bibitem{AKM18}
H.~Arbabi, M.~Korda, and I.~Mezic.
\newblock {A data-driven Koopman model predictive control framework for
  nonlinear flows}.
\newblock {\em arXiv:1804.05291}, 2018.

\bibitem{PK17}
S.~Peitz and S.~Klus.
\newblock {Koopman operator-based model reduction for switched-system control
  of PDEs}.
\newblock {\em arXiv:1710:06759}, 2017.

\bibitem{Pei18}
S.~Peitz.
\newblock {Controlling nonlinear PDEs using low-dimensional bilinear
  approximations obtained from data}.
\newblock {\em arXiv:1801.06419}, 2018.

\bibitem{KM17}
M.~Korda and I.~Mezi{\'{c}}.
\newblock {On Convergence of Extended Dynamic Mode Decomposition to the Koopman
  Operator}.
\newblock {\em Journal of Nonlinear Science}, 28(2):687--710, 2018.

\bibitem{HWR14}
M.~S. Hemati, M.~O. Williams, and C.~W. Rowley.
\newblock {Dynamic Mode Decomposition for Large and Streaming Datasets}.
\newblock {\em Physics of Fluids}, 26(111701):1--6, 2014.

\bibitem{LaMa94}
A.~Lasota and M.~C. Mackey.
\newblock {\em Chaos, fractals, and noise: Stochastic aspects of dynamics},
  volume~97 of {\em Applied Mathematical Sciences}.
\newblock Springer, 2nd edition, 1994.

\bibitem{KNKWKSN18}
S.~Klus, F.~N\"uske, P.~Koltai, H.~Wu, I.~Kevrekidis, C.~Sch\"utte, and
  F.~No\'e.
\newblock Data-driven model reduction and transfer operator approximation.
\newblock {\em Journal of Nonlinear Science}, 28(3):985--1010, 2018.

\bibitem{ZA15}
F.~Zhu and P.~J. Antsaklis.
\newblock {Optimal control of hybrid switched systems: A brief survey}.
\newblock {\em Discrete Event Dynamic Systems}, 25(3):345--364, 2015.

\bibitem{EWD03}
M.~Egerstedt, Y.~Wardi, and F.~Delmotte.
\newblock {Optimal Control of Switching Times in Switched Dynamical Systems}.
\newblock In {\em 42nd IEEE International Conference on Decision and Control
  (CDC)}, pages 2138--2143, 2003.

\bibitem{EWA06}
M.~Egerstedt, Y.~Wardi, and H.~Axelsson.
\newblock {Transition-Time Optimization for Switched-Mode Dynamical Systems}.
\newblock {\em IEEE Transactions on Automatic Control}, 51(1):110--115, 2006.

\bibitem{SOBG16}
B.~Stellato, S.~Ober-Bl{\"{o}}baum, and P.~J. Goulart.
\newblock {Optimal Control of Switching Times in Switched Linear Systems}.
\newblock In {\em IEEE 55th Conference on Decision and Control}, pages
  7228--7233, 2016.

\bibitem{FMO13}
K.~Fla{\ss}kamp, T.~Murphey, and S.~Ober-Bl{\"{o}}baum.
\newblock {Discretized Switching Time Optimization Problems}.
\newblock In {\em 12th European Control Conference}, pages 3179--3184, 2013.

\bibitem{SOBG17}
B.~Stellato, S.~Ober-Bl{\"{o}}baum, and P.~J. Goulart.
\newblock {Second-Order Switching Time Optimization for Switched Dynamical
  Systems}.
\newblock {\em IEEE Transactions on Automatic Control}, PP(99):1--8, 2017.

\bibitem{BD15}
R.~E. Bellmann and E.~D. Stuart.
\newblock {\em {Applied dynamic programming}}.
\newblock Princeton University Press, 2015.

\bibitem{Ell09}
D.~Elliott.
\newblock {\em {Bilinear Control Systems}}.
\newblock Springer Science + Business Media, 2009.

\bibitem{PY08}
P.~M. Pardalos and V.~Yatsenko.
\newblock {\em {Optimization and Control of Bilinear Systems}}.
\newblock Springer, 2008.

\bibitem{Cim08}
T.~{\c{C}}imen.
\newblock {State-Dependent Riccati Equation (SDRE) Control: A Survey}.
\newblock In {\em IFAC Proceedings Volumes}, volume~41, pages 3761--3775. IFAC,
  2008.

\bibitem{AHKO12}
F.~Albrecht, B.~Haasdonk, S.~Kaulmann, and M.~Ohlberger.
\newblock {The localized reduced basis multiscale method}.
\newblock In {\em Proceedings of ALGORITHMY 2012}, pages 393--403, 2012.

\bibitem{NW06}
J.~Nocedal and S.~J. Wright.
\newblock {\em {Numerical Optimization}}.
\newblock Springer Series in Operations Research and Financial Engineering.
  Springer Science {\&} Business Media, 2. edition, 2006.

\bibitem{JJT07}
H.~Jasak, A.~Jemcov, and Z.~Tukovic.
\newblock {OpenFOAM : A C++ Library for Complex Physics Simulations}.
\newblock {\em International Workshop on Coupled Methods in Numerical
  Dynamics}, pages 1--20, 2007.

\bibitem{HPW+18}
S.~Hanke, S.~Peitz, O.~Wallscheid, S.~Klus, J.~B{\"{o}}cker, and M.~Dellnitz.
\newblock {Koopman Operator Based Finite-Set Model Predictive Control for
  Electrical Drives}.
\newblock {\em arXiv:1804.00854}, 2018.

\end{thebibliography}
\end{document}